\documentclass[reqno, 12pt,dvipsnames ]{amsart}
\usepackage{amsmath,amsxtra,amssymb,latexsym, amscd,amsthm}
\usepackage{graphicx,color}
\usepackage[utf8]{inputenc}
\usepackage[mathscr]{euscript}
\usepackage{mathrsfs}
\usepackage[english]{babel}
\usepackage{enumerate}
\usepackage{color}
\usepackage{arcs}
\usepackage{tikz}
\usepackage{float}
\usetikzlibrary{calc}
\usetikzlibrary{math}
\usepackage{graphicx}
\usepackage{pdfpages}
\usepackage[toc,page]{appendix}
\usepackage{enumitem}
\usepackage{hyperref}

\newtheorem {theorem}{Theorem}[section]
\newtheorem {corollary}[theorem]{Corollary}
\newtheorem {proposition}[theorem]{Proposition}
\newtheorem {lemma}[theorem]{Lemma}
\newtheorem {claim}[theorem]{Claim}

\newtheorem{definition}[theorem]{Definition}
\newtheorem {example}{Example}[section]
\newtheorem{remark}{Remark}[section]

\newcommand{\dist}{{\rm dist}}

\newcommand{\Int}{{\rm Int}}

\newcommand{\Ext}{{\rm Ext } }

\newcommand{\ord}{{\rm  ord }}

\newcommand{\ak}{\mathfrak}

\newcommand{\bb}{\mathbb}

\title{PLANAR EXTENSIONS IN O-MINIMAL STRUCTURES}
\author{Si Tiep Dinh}
\address{Institute of Mathematics, VAST, 18, Hoang Quoc Viet Road, Cau Giay District 10307, Hanoi, Vietnam}
\email{dstiep@math.ac.vn}

\author{Nhan Nguyen}
\address{FPT University, Danang, Vietnam}
\email{nguyenxuanvietnhan@gmail.com}

\date{\today}
\subjclass[2020]{Primary 57K20; Secondary 03C64, 54C20, 05C10, 57M15.}
\keywords{Planar extensions, o-minimal structures, ambient homeomorphisms, graph embeddings, cyclic orders}

\begin{document}

\begin{abstract}
Let $X \subset \mathbb{R}^2$ be a closed definable set of dimension at most $1$, and let
$h : X \to \mathbb{R}^2$ be a definable continuous injective map. In this paper, we establish necessary and sufficient combinatorial conditions, formulated in terms of cyclic orders at topological singular points and orientations of Jordan curves, for $h$ to admit a definable homeomorphic extension to the whole plane.
\end{abstract}

\maketitle

\section*{Introduction}
The classical Jordan curve theorem states that every simple closed curve (Jordan curve) in the plane $\mathbb{R}^2$ separates the plane into exactly two connected components: a bounded interior and an unbounded exterior. The Schoenflies theorem strengthens this statement by asserting that any homeomorphism from the unit circle onto a Jordan curve extends to a homeomorphism of the entire plane, mapping the interior to the interior and the exterior to the exterior. Consequently, any two embeddings of a Jordan curve in the plane are ambiently homeomorphic.

A natural question is whether a similar extension property remains valid for more general subsets of the plane. More precisely, given an embedding (i.e., a continuous injective map)
$$
h \colon X \hookrightarrow \mathbb{R}^2,
$$
where $X \subset \mathbb{R}^2$, one may ask:

\begin{quote}
\emph{When does $h$ extend to a homeomorphism of the whole plane?}
\end{quote}

In general, even embeddings of simple sets need not admit such extensions. For instance, let $X$ be the union of a circle and its center, and let $h$ be the identity on the circle while mapping the center to a point outside the circle. Then $h$ cannot extend to a homeomorphism of the plane. Thus, any positive extension theorem must impose additional conditions on the embedding $h$.

This question is closely related to classical problems in topological graph theory. Two embeddings of a graph $G$ into a surface $S$ are said to be equivalent if there exists a homeomorphism of $S$ mapping one embedding to the other. Whitney~\cite{Whitney32} proved that every $3$-connected planar graph has a unique embedding in the sphere up to equivalence. Many further results in this direction can be found in the book of Mohar and Thomassen~\cite{Mohar03}.

In~\cite{Youngs48}, Young gave a partial answer to the extension problem: if $M$ and $N$ are $2$-dimensional manifolds whose boundaries are unions of pairwise disjoint Jordan curves, then a homeomorphism
$
h:\partial M\to \partial N
$
extends to a homeomorphism $M\to N$ if and only if it carries a concordant orientation on $\partial M$ to a concordant orientation on $\partial N$. In the planar case, this generalizes the Schoenflies theorem from a single Jordan curve to finitely many pairwise disjoint Jordan curves.

A much more general extension theorem was proved by Gehman \cite{Gehman1} for continuous
one-to-one correspondences between plane continuous curves. He proved that if $M, M'\subset \mathbb{R}^2$ are plane continuous curves and $T: M\to M'$ is a homeomorphism such that every simple closed curve $\gamma\subset \mathbb{R}^2$ containing an arc $AB\subset M$, there exists a simple closed curve $\gamma'\subset \bb R^2$ containing $h(AB)$ and satisfying 
$$ h(M \cap \Int(\gamma)) = M' \cap \Int (\gamma')$$
and symmetrically with the role of $M$ and $M'$ reversed, then $T$ extends to a homeomorphism $U: \mathbb{R}^2 \to \mathbb{R}^2$.  Here, $\Int(\gamma)$ denotes the interior of $\gamma$.

Later, in~\cite{Gehman2}, Gehman extended his result to $E$-sets, which may have infinitely many connected components, by imposing compatible side-preserving and interior-preserving conditions. While Gehman's theorems are topologically natural and very general, their hypotheses are often difficult to verify in practice, even for tame embeddings.

In this paper, we study the extension problem in the o-minimal setting, which provides a rich framework for tame geometry. Let $X \subset \mathbb{R}^2$ be a closed definable set of dimension at most $1$, and let
$
h \colon X \to \mathbb{R}^2
$
be a definable embedding. When $X$ is unbounded, we additionally assume that $h$ is coercive, i.e.,
$$
\|x\| \to \infty
\quad \Longrightarrow \quad
\|h(x)\| \to \infty.
$$
This growth condition is necessary for the existence of a homeomorphic extension to $\mathbb{R}^2$.

Our main result provides necessary and sufficient combinatorial conditions for the existence of a definable ambient extension. In the compact case, Theorem~\ref{thm_main} shows that $h$ admits a definable homeomorphic extension to $\mathbb{R}^2$ if and only if there exists $c\in\{\pm1\}$ such that

\begin{enumerate}
    \item $\ord(h,p)=c$ for every topological singular point
    $p\in X$ of degree at least $3$;
    
    \item $w(h,\gamma)=c$ for every facial Jordan curve
    $\gamma$ of $X$;
    
    \item for every facial Jordan curve $\gamma$ of $X$ and every
    $x\in X\setminus \gamma$,
    $$
    x\in \Int(\gamma)
    \quad\Longleftrightarrow\quad
    h(x)\in \Int(h(\gamma)).
    $$
\end{enumerate}

Here, a facial Jordan curve is a Jordan curve contained in the boundary of a bounded face of $X$ and enclosing that face; Lemma~\ref{graph-properties} (viii) shows that such a curve exists and is unique for each bounded face.  The notation $\ord(h,p)$ indicates whether the local cyclic order of the branches of $X$ at the singular point $p$ is preserved or reversed: $\ord(h,p)=1$ means that $h$ preserves this cyclic order, while $\ord(h,p)=-1$ means that it reverses it. Similarly, $w(h,\gamma)$ indicates whether $h$ preserves or reverses the orientation of the Jordan curve $\gamma$.

The proof overcomes the main difficulty that the complementary faces of $X$ need not have Jordan curve boundaries, preventing a direct face-by-face application of the Schoenflies theorem. We resolve this issue by adding finitely many suitable definable arcs to $X$, chosen so that the enlarged set decomposes the plane into regions bounded by Jordan curves. Conditions~(1)--(3) guarantee that each added arc in the source corresponds to an admissible arc in the target, allowing $h$ to extend to a definable embedding of the enlarged sets. The enlarged source and target then admit compatible Jordan decompositions to which a definable version of the Schoenflies theorem can be applied region by region.

The unbounded case is reduced to the compact one via a compactification argument using inversions. We introduce the notion of cyclic orders at infinity for the unbounded branches of $X$. After choosing points $p$ and $q$ near infinity (outside $X$ and $h(X)$, respectively), we consider the compactified map $g_{p, q}: \phi_p(X) \cup \{0\} \to \mathbb R^2$  given by 
$$
g_{p,q}(x)=\left\{
\begin{array}{rl}
\phi_q\circ h\circ \phi_p^{-1}(x), & x\neq 0,\\
0, & x=0,
\end{array}\right.
$$
where
$$
\phi_p(x)=\frac{x-p}{\|x-p\|^2}
$$
(and similarly for $\phi_q$). Theorem~\ref{thm_main_unbounded} shows that $h$ extends if and only if $g_{p,q}$ does.

From the perspective of topological graph theory, our results give an ambient equivalence criterion for finite plane graphs. Here a plane graph means a graph embedded in $\mathbb R^2$. We allow loops, multiple edges and unbounded edges, and we assume that all edges are simple polygonal arcs. It is known that every planar graph admits a realization in $\mathbb R^2$ whose edges are simple polygonal arcs; see, for example, \cite[Lemma 2.1.1]{Mohar03}.

More precisely, let $f:G_1\to G_2$ be a homeomorphism between finite plane graphs. Then $f$ extends to a homeomorphism of the whole plane $\mathbb R^2$ if and only if Conditions (1)--(3) hold. Moreover, if $f$ is definable then the extension can also be chosen to be definable. 

The paper is organized as follows. Section \ref{section1} recalls the necessary notions of orientation, local cyclic order, and definable Jordan curves. Section \ref{section2} develops some elementary properties of plane graphs and facial Jordan curves. Section \ref{section3} proves the extension theorem in the compact case. Section \ref{section4} treats the unbounded case by introducing cyclic orders at infinity and reducing the problem to the compact case via inversion. Section \ref{section5} presents some examples showing that Conditions (1)--(3) cannot be omitted.

Throughout this paper, we fix an o-minimal structure on $(\mathbb R,+,\cdot)$, and all definability is understood with respect to this structure. We assume some familiarity with the basic notions of o-minimality. However, the arguments in this paper use only elementary properties of o-minimal structures and do not require any advanced machinery. For further background, we refer the reader to \cite{Coste2000, Dries1998, Loi1}.

\subsection*{Notations}
\begin{itemize}
    \item For $X\subset \mathbb R^2$, we denote by $\partial X$ and $\overline{X}$ respectively the topological boundary and the closure of $X$ in $\mathbb R^2$.

    \item Given $p\in \mathbb R^2$, we denote by $\mathbb B_r(p)$ and $\mathbb S_r(p)$
    the open ball and the circle of radius $r$ centered at $p$,
    respectively. When $p=0$, we write $\mathbb S_r$ instead of
    $\mathbb S_r(0)$.
\end{itemize}
\subsection*{Acknowledgments}
Part of this work was carried out during the authors' visits to the Vietnam Institute for Advanced Study in Mathematics (VIASM). The authors gratefully acknowledge VIASM for its warm hospitality and generous support. The second author was supported by the Vietnam Ministry of Education and Training (MOET) under grant number B2025-CTT-01.

\section{Auxiliary results}\label{section1}

\subsection{Orientations of arcs and Jordan curves}

We first fix some notation. By an \emph{arc} we mean the image of an injective continuous map
$\alpha:[0,1]\to \mathbb R^2$.
If the endpoints of the arc are $p$ and $q$, we denote the arc by
$[p,q]$, and its relative interior by
$\mathring{\alpha}=(p,q)$.
If the arc is contained in a set $X$, we may write $[p,q]_X$.

Let $Y\subset \mathbb R^2$. An arc $\alpha$ is said to be
\emph{internally contained in $Y$} if
$\mathring{\alpha}\subset Y$.

A \emph{Jordan curve} is the image of a continuous map
$\gamma:[0,1]\to \mathbb R^2$ such that $\gamma(0)=\gamma(1)$ and
$\gamma|_{[0,1)}$ is injective. By the Jordan curve theorem,
$\mathbb R^2\setminus \gamma$ has exactly two connected components. We
denote the bounded component by $\Int(\gamma)$ and the unbounded component
by $\Ext(\gamma)$.

If $X\subset \mathbb R^2$ is a definable set of dimension at most $1$, we
denote by $\mathfrak J(X)$ the set of Jordan curves contained in $X$.

An orientation of an arc is obtained by choosing one endpoint as the initial point and the other as the terminal point. Thus an arc has two possible orientations, corresponding to its two directions.
A Jordan curve also has two possible orientations. We say that an oriented Jordan curve is \emph{counterclockwise} if its interior lies locally on the left-hand side as the curve is traversed, and \emph{clockwise} if its interior lies locally on the right-hand side.

Let $\gamma$ be an oriented arc or a Jordan curve, and let
$p\in \mathring{\gamma}$. Then $p$ divides a neighborhood of $p$ in
$\gamma$ into two oriented subarcs having $p$ as a common endpoint. The subarc for which $p$ is the terminal point is called the \emph{incoming arc} of $\gamma$ at $p$, while the other subarc is called the \emph{outgoing arc} of $\gamma$ at $p$. When the arc $\gamma$ and the point $p$ are clear from the context, we simply call them the incoming and outgoing arcs.

Let $\gamma$ be an oriented Jordan curve, and let $p,q\in \gamma$ be
distinct points. We denote by
$[p,q]^+_\gamma$
the subarc of $\gamma$ from $p$ to $q$ with respect to the
counterclockwise orientation of $\gamma$. We denote by
$[p,q]^-_\gamma$
the other subarc of $\gamma$ from $p$ to $q$. We use the analogous
notation for open and half-open subarcs.

Finally, let $h:\gamma\to \mathbb R^2$ be a continuous injective map, where $\gamma$ is a Jordan curve. Then $h(\gamma)$ is again a Jordan curve. We define
$$
w(h,\gamma)=
\left\{
\begin{array}{rl}
1, & \text{if } h \text{ preserves the orientation of } \gamma,\\
-1, & \text{if } h \text{ reverses the orientation of } \gamma.
\end{array}
\right.
$$

\begin{lemma}\label{PreserveReverse}
Let $H:\mathbb R^2\to \mathbb R^2$ be a homeomorphism. Then either $H$
preserves the orientation of every Jordan curve in $\mathbb R^2$, or $H$
reverses the orientation of every Jordan curve in $\mathbb R^2$.
\end{lemma}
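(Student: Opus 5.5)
We plan to encode orientation through local crossing data and then use connectedness of the space of Jordan curves. Since a homeomorphism $H$ maps Jordan curves to Jordan curves and interiors to interiors, we first reformulate what it means for $H$ to preserve the orientation of $\gamma$. Fix a point $p\in\gamma$ and a short arc $\beta$ that crosses $\gamma$ exactly once at $p$, starting at a point of $\Ext(\gamma)$ and ending at a point of $\Int(\gamma)$. The orientation of $\gamma$ is counterclockwise exactly when, in the local cyclic order at $p$ of the four branches (incoming arc of $\gamma$, outgoing arc of $\gamma$, the exterior half of $\beta$, the interior half of $\beta$), the interior half of $\beta$ lies on the left. Because $H$ maps $\Int(\gamma)$ onto $\Int(H(\gamma))$ (it maps the bounded component to the bounded one, by properness of homeomorphisms of $\mathbb R^2$), $H$ preserves the orientation of $\gamma$ if and only if $H$ preserves this local cyclic order of the four branches at $p$. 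Thus $w(H,\gamma)$ equals a local quantity $\epsilon(H,p)\in\{\pm1\}$, namely whether $H$ preserves or reverses the cyclic order of a ``cross'' at $p$.

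The second step is to show that $\epsilon(H,p)$ does not depend on the choice of cross at $p$, nor on $p$. Independence of the choice of cross at $p$ is shown by comparing any small cross with a tiny round circle $\mathbb S_r(p)$: the orientation of $\mathbb S_r(p)$ under $H$ is determined by the cyclic order of any three points on it, and two crosses can be compared through a small circle meeting both. Hence $\epsilon(H,p)=w(H,\mathbb S_r(p))$ for all small $r$. Next, the function $p\mapsto w(H,\mathbb S_r(p))$ is locally constant. Indeed, for $p'$ close to $p$, the circles $\mathbb S_r(p)$ and $\mathbb S_{r}(p')$ can both be compared with a circle $\mathbb S_{2r}(p)$ enclosing them, using Young's concordant-orientation criterion for the annular region between them, or more elementarily the statement that two nested disjoint Jordan curves bounding an annulus, oriented counterclockwise, are mapped by $H$ to curves of the same orientation type. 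Since $\mathbb R^2$ is connected, $\epsilon(H,\cdot)$ is constant.

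Finally, for an arbitrary Jordan curve $\gamma$, we pick any $p\in\gamma$ and apply the first step to conclude $w(H,\gamma)=\epsilon(H,p)$, which is the global constant. This gives the dichotomy.

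The main obstacle is the rigorous justification of the first step: the local cyclic order of branches at a point and the ``interior lies on the left'' definition must be related carefully for a general, possibly wild, topological Jordan curve. To make this manageable, we would first reduce to nice curves. By the Schoenflies theorem there is a homeomorphism $\Phi$ taking $\gamma$ to a round circle, and we write $H$ on $\gamma$ as a composition, so that orientation questions for $\gamma$ reduce to those for circles, where crosses and left/right are transparent. The remaining work is then showing that the orientation type of a homeomorphism along small circles is locally constant. For this we would use a degree or winding-number argument: the winding number of $H(\mathbb S_r(p))$ around $H(p)$ is $\pm1$ and varies continuously under homotopy through maps avoiding $H(p)$.
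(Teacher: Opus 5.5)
The paper gives no proof of this lemma; it only invokes it as ``a standard topological result''. Your outline therefore has to stand on its own, and it has a genuine gap exactly where the content of the lemma lies.

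\textbf{Step 1.} Your first step needs, for an \emph{arbitrary} Jordan curve, that ``counterclockwise'' is detected at every point $p$ by the side on which the interior half of a crossing arc sits in the local cyclic order at $p$. You need this not only for $\gamma$ but also for $H(\gamma)$, which is an arbitrary Jordan curve even when $\gamma$ is a round circle. That all points of a topological Jordan curve give the same answer is precisely the nontrivial global consistency statement.

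Your proposed repair via a Schoenflies homeomorphism $\Phi$ with $\Phi(\gamma)$ round does not supply it. Transporting orientations and local cyclic orders through $\Phi$ requires already knowing that $\Phi$ preserves (or reverses) both compatibly, which is the lemma for $\Phi$. It also does nothing for the target curve $H(\gamma)$.

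\textbf{Step 2.} This step has the same circularity. The ``more elementary'' fact that two nested disjoint Jordan curves are carried to curves of the same orientation type is a special case of what you are proving. If you invoke the necessity half of Young's criterion instead, you get the whole lemma at once: compare any $\gamma$ with a large circle $\mathbb S_R$ enclosing it, then compare such circles with each other. The crosses then play no role and the argument is a citation.

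Likewise, your winding-number remark shows that $\operatorname{wind}(H\circ c_{p,r},H(p))$ is locally constant in $p$. Turning this into a statement about $w(H,\mathbb S_r(p))$ needs ``counterclockwise $\Leftrightarrow$ winding number $+1$ about interior points'' for the arbitrary curve $H(\mathbb S_r(p))$, which is the same missing bridge. A smaller point: branches of a topological cross may meet $\mathbb S_r(p)$ infinitely often, so cyclic orders cannot simply be read off round circles.

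\textbf{The missing idea.} Use the winding number globally, around a point of $\Int(\gamma)$ rather than at points of $\gamma$. Take ``$\gamma$ is counterclockwise'' to mean $\operatorname{wind}(\gamma,x)=1$ for $x\in\Int(\gamma)$; this is the rigorous content of ``interior on the left'' for a topological curve.
\begin{enumerate}
\item For $x\in\mathbb R^2$ and $r>0$, let $c_{x,r}$ be the counterclockwise parametrization of $\mathbb S_r(x)$, and put $W(x,r):=\operatorname{wind}(H\circ c_{x,r},H(x))$.
\item Since $H(x)\notin H(\mathbb S_r(x))$ and everything depends continuously on $(x,r)$, $W$ is locally constant. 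Hence it is constant on the connected set $\mathbb R^2\times(0,\infty)$.
\item If $\gamma$ is counterclockwise and $x\in\Int(\gamma)$, then $\gamma$ and $c_{x,r}$ are freely homotopic in $\mathbb R^2\setminus\{x\}$, since both have winding number $1$ about $x$. So $H\circ\gamma$ and $H\circ c_{x,r}$ are freely homotopic in $\mathbb R^2\setminus\{H(x)\}$, and $\operatorname{wind}(H\circ\gamma,H(x))=W$.
\item By your properness remark, $H(x)\in\Int(H(\gamma))$. So $w(H,\gamma)=W$ for every Jordan curve $\gamma$.
\end{enumerate}
No crosses, Schoenflies reduction, or annulus comparison are needed.
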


A standard topological result guarantees the global consistency of this factor.

To compare two distinct oriented Jordan curves
$\gamma_1, \gamma_2 \in \mathfrak{J}(\mathbb{R}^2)$, we define the
\emph{relative orientation operator}
$$
o(\gamma_1,\gamma_2)=
\left\{
\begin{array}{rl}
1, & \text{if } \gamma_1 \text{ and } \gamma_2
\text{ possess the same orientation,}\\
-1, & \text{otherwise.}
\end{array}
\right.
$$

The following lemma characterizes how local intersections determine the global behavior of overlapping Jordan curves.

\begin{lemma}\label{InOut}
Let $\gamma_1,\gamma_2\subset \bb R^2$ be oriented definable Jordan curves such that
$\gamma_1\cap \gamma_2$ contains an arc $\alpha$. Then exactly one of the following holds:
\begin{enumerate}
\item [(i)] $\alpha\subset \overline{\Int(\gamma_1)\cap \Int(\gamma_2)}$;
\item [(ii)] $\alpha\cap \overline{\Int(\gamma_1)\cap \Int(\gamma_2)}=\varnothing$.
\end{enumerate}
Moreover, in case \textit{(i)}, the curves $\gamma_1$ and $\gamma_2$ induce the same orientation
on $\alpha$ if and only if $o(\gamma_1,\gamma_2)=1$; in case \textit{(ii)}, they induce the same
orientation on $\alpha$ if and only if $o(\gamma_1,\gamma_2)=-1$.
\end{lemma}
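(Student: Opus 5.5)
The plan is a local analysis at interior points of $\alpha$, followed by a connectedness argument along $\mathring\alpha$. The orientation statement will then come from the convention that a counterclockwise curve has its interior on the left.

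\emph{Step 1 (local picture).} Fix $x\in\mathring\alpha$. Since $\gamma_1,\gamma_2$ are definable Jordan curves, the local conic structure of definable sets gives a small $r>0$ with
$$\gamma_1\cap \mathbb B_r(x)=\gamma_2\cap\mathbb B_r(x)=\alpha\cap\mathbb B_r(x),$$
and this set is a single arc crossing the disk. It cuts $\mathbb B_r(x)$ into exactly two components $D^+_x$ and $D^-_x$. We label them consistently with a fixed orientation of $\alpha$: $D^+_x$ is the component on the left. By the Jordan curve theorem, each point of $\gamma_i$ is in the boundary of both $\Int(\gamma_i)$ and $\Ext(\gamma_i)$. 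Since $D^\pm_x$ are connected and disjoint from $\gamma_i$, it follows that exactly one of $D^+_x,D^-_x$ lies in $\Int(\gamma_i)$ and the other in $\Ext(\gamma_i)$. Define $s_i(x)\in\{\pm\}$ accordingly.

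\emph{Step 2 (constancy).} The function $s_i$ is locally constant on $\mathring\alpha$. Indeed, for $y$ close to $x$, the half-disks $D^{+}_y$ and $D^{+}_x$ intersect, because the left side is determined by the fixed orientation of $\alpha$. Since $\mathring\alpha$ is connected, $s_1$ and $s_2$ are constant. This gives the dichotomy.
\begin{itemize}
\item If $s_1=s_2$, then for each $x\in\mathring\alpha$ the common half-disk lies in $\Int(\gamma_1)\cap\Int(\gamma_2)$, so $x$ belongs to its closure. Taking closures gives $\alpha\subset\overline{\Int(\gamma_1)\cap\Int(\gamma_2)}$, which is case (i).
\item If $s_1\ne s_2$, then $\Int(\gamma_1)\cap\Int(\gamma_2)\cap\mathbb B_r(x)=\varnothing$, so no point of $\mathring\alpha$ lies in the closure.
\end{itemize}
Exactly one of the two cases occurs.

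\emph{Step 3 (endpoints; main obstacle).} In the case $s_1\neq s_2$, the endpoints of $\alpha$ need care. Near an endpoint $p$, the curves leave $\alpha$ along different branches. The sectors of $\Int(\gamma_1)$ and $\Int(\gamma_2)$ at $p$ could then overlap, so $p$ might lie in $\overline{\Int(\gamma_1)\cap\Int(\gamma_2)}$. I would first try to rule this out by a sector analysis at $p$ using local conic structure. If that fails, I would read (ii) as a statement about $\mathring\alpha$, or take $\alpha$ to be a maximal common arc; either reading is all that the later applications need.

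\emph{Step 4 (orientations).} Write the given orientation of $\gamma_i$ as $\varepsilon_i$ times the counterclockwise one, with $\varepsilon_i\in\{\pm1\}$. Then $o(\gamma_1,\gamma_2)=\varepsilon_1\varepsilon_2$. The counterclockwise orientation of $\gamma_i$ puts $\Int(\gamma_i)$ locally on the left. Hence on $\alpha$ it induces the fixed direction of $\alpha$ exactly when $s_i=+$.
\begin{itemize}
\item In case (i), $s_1=s_2$, so the two counterclockwise orientations agree on $\alpha$. The given orientations then agree iff $\varepsilon_1\varepsilon_2=1$.
\item In case (ii), $s_1\ne s_2$, so the two counterclockwise orientations are opposite on $\alpha$. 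The given orientations then agree iff $\varepsilon_1\varepsilon_2=-1$.
\end{itemize}
This is the claimed statement.
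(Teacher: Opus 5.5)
Your Steps 1, 2 and 4 are the paper's argument. Both take a small ball around a point of $\mathring\alpha$ in which the two curves reduce to one open arc, note that exactly one side of that arc lies in each $\Int(\gamma_i)$, argue that this choice is constant along the connected arc, and read off the orientation statement from the interior-on-the-left convention.

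Your worry in Step 3 is justified. It is a defect of the statement itself: clause (ii) can fail at an endpoint of $\alpha$, and the paper's proof passes over this because it examines only interior points of $\alpha$. Here is a counterexample.
\begin{itemize}
\item Let $\gamma_1$ be the polygonal (hence definable) Jordan curve with vertices $(0,0),(-1,1),(-2,1),(-2,-2),(2,-2),(2,0)$.
\item Let $\gamma_2$ be the polygonal Jordan curve with vertices $(0,0),(-1,-1),(-3,-1),(-3,3),(2,3),(2,0)$.
\item Let $\alpha=[0,2]\times\{0\}$. This is a maximal common arc, since at both $(0,0)$ and $(2,0)$ the two curves leave along different edges.
\end{itemize}
Counting crossings of vertical rays shows that $\Int(\gamma_1)$ lies just below $\mathring\alpha$ and $\Int(\gamma_2)$ just above it. Hence $(1,0)\notin\overline{\Int(\gamma_1)\cap\Int(\gamma_2)}$, and (i) fails.

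Now fix $0<t<2$. The leftward horizontal ray from $(-t,0)$ meets $\gamma_1$ exactly once, on the edge $x=-2$, and meets $\gamma_2$ exactly once, on the edge $x=-3$. So $(-t,0)\in\Int(\gamma_1)\cap\Int(\gamma_2)$, and therefore $(0,0)\in\alpha\cap\overline{\Int(\gamma_1)\cap\Int(\gamma_2)}$. Thus (ii) fails too.

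This is exactly your overlapping-sectors configuration. At $(0,0)$ both interiors are reflex sectors, and they overlap between the two departing edges.

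Consequences for your Step 3:
\begin{itemize}
\item The sector analysis you intend to try first cannot succeed.
\item Restricting to a maximal common arc does not rescue the statement, because the $\alpha$ above is already maximal. So it is not true that ``either reading'' suffices.
\item The correct repair is your other option. State (ii) as $\mathring\alpha\cap\overline{\Int(\gamma_1)\cap\Int(\gamma_2)}=\varnothing$, i.e.\ the interiors lie on opposite sides of $\mathring\alpha$. Clause (i) needs no change, since $\mathring\alpha$ is dense in $\alpha$.
\end{itemize}

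With this reading, your Steps 1, 2 and 4 are a complete proof. It is also all the paper needs later:
\begin{itemize}
\item In Lemma~\ref{Not-depend}, clause (ii) is invoked only for two curves whose interiors are disjoint.
\item Lemma~\ref{lem_theta} uses only the same-side/opposite-side formulation.
\item The remaining applications fall under case (i).
\end{itemize}
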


\begin{proof}
Let $p\in \mathring{\alpha}$. Since $\alpha$ is a common embedded arc of
both curves, there exists a small open ball $\mathbb B_r(p)$ centered at $p$ such that
$$
\mathbb B_r(p)\cap \gamma_1 = \mathbb B_r(p)\cap \gamma_2
$$
is a single open arc, say $\beta$. This arc $\beta$ divides the ball $\mathbb B_r(p)$
into two open connected components, which we denote by $B^+$ and $B^-$.
For each $i=1,2$, exactly one of the two components $B^+$ or $B^-$ lies in $\Int(\gamma_i)$.
Since the curves are continuous and $\alpha$ is connected, the choice of which side contains the interior cannot suddenly switch along $\alpha$.
Therefore, there are exactly two global possibilities along the whole arc
$\alpha$:
\begin{enumerate}
\item [(i)] The interiors of $  \gamma_1  $ and $  \gamma_2  $ lie on the \emph{same} side of $  \alpha  $. In this case, we have $$\alpha \subset \overline{\Int(\gamma_1) \cap \Int(\gamma_2)}.$$
\item [(ii)] The interiors of $  \gamma_1  $ and $  \gamma_2  $ lie on \emph{opposite} sides of $  \alpha  $. In this case, $$\alpha \cap \overline{\Int(\gamma_1) \cap \Int(\gamma_2)} = \varnothing.$$
\end{enumerate}
This shows that exactly one of the two cases holds.
It remains to relate these cases to the orientations induced on $\alpha$.
Recall that a curve is oriented counterclockwise if and only if its interior lies locally on the left-hand side when traversing the curve.

\noindent\textbf{Case (i):} The interiors lie on the same side of $  \alpha  $.
Since both curves have their interior on the same side, they must be traversed in the same direction along $  \alpha  $ in order for the interior to remain on the left for both. Therefore, $  \gamma_1  $ and $  \gamma_2  $ induce the same orientation on $  \alpha  $ if and only if they have the same global orientation, that is, $  o(\gamma_1, \gamma_2) = 1  $.

\noindent\textbf{Case (ii):} The interiors lie on opposite sides of $  \alpha  $.
For the interior to stay on the left of each curve, the two curves must now be traversed in {opposite} directions along $  \alpha  $. Hence, $  \gamma_1  $ and $  \gamma_2  $ induce the same orientation on $  \alpha  $ if and only if they have opposite global orientations, that is, $  o(\gamma_1, \gamma_2) = -1  $.
\end{proof}

Let $\gamma$ be a Jordan curve and let $P = \{p_1, \dots, p_n\} \subset \gamma$ ($n \geq 3$) be a set of pairwise distinct points.  We define a \textit{cyclic relation} $<_\gamma$ on $P$ as follows: for $i \neq j$, we write$$p_i <_\gamma p_j$$if $p_j$ is the first point among $P \setminus \{p_i\}$ encountered when traversing $\gamma$ counterclockwise starting from $p_i$. Equivalently, the open counterclockwise subarc $(p_i, p_j)^+_\gamma$ contains no points in $P$.

Now, let $h: \gamma \to \mathbb{R}^2$ be a definable embedding. By the Schoenflies theorem, $h$ extends to a global homeomorphism of $\mathbb{R}^2$. Applying Lemma \ref{PreserveReverse} to this extension, we obtain:

\begin{lemma}\label{cyclic-order-lemma} The map $h$ either preserves the cyclic order of any finite set of points $p_1, \dots, p_n \in \gamma$ or reverses it.\end{lemma}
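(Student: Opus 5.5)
Extend $h$ to a homeomorphism of the plane and read off both the cyclic order and the orientation from that single map. By the Schoenflies theorem, the embedding $h:\gamma\to\mathbb R^2$ extends to a homeomorphism $H:\mathbb R^2\to\mathbb R^2$. By Lemma~\ref{PreserveReverse}, $H$ either preserves the orientation of every Jordan curve or reverses it. In particular, $w(h,\gamma)=\pm1$ is well defined and coincides with this global sign. I will show that $h$ preserves the cyclic relation $<_\gamma$ on $P$ when $w(h,\gamma)=1$ and reverses it when $w(h,\gamma)=-1$. The sign does not depend on $P$, which gives the dichotomy uniformly for every finite set.

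\textbf{Orientation-preserving case.} Fix $P=\{p_1,\dots,p_n\}\subset\gamma$ with $n\ge 3$. Suppose $p_i<_\gamma p_j$, that is, the open counterclockwise arc $(p_i,p_j)^+_\gamma$ contains no point of $P$. Since $h$ is a homeomorphism $\gamma\to h(\gamma)$, it carries the two complementary arcs of $\gamma$ with endpoints $p_i,p_j$ onto the two complementary arcs of $h(\gamma)$ with endpoints $h(p_i),h(p_j)$. It also sends the counterclockwise parametrization of $\gamma$ to a parametrization of $h(\gamma)$. If $w(h,\gamma)=1$, this parametrization is counterclockwise, so
\[
h\bigl((p_i,p_j)^+_\gamma\bigr)=(h(p_i),h(p_j))^+_{h(\gamma)}.
\]
By injectivity of $h$, this arc contains no point of $h(P)$, hence $h(p_i)<_{h(\gamma)}h(p_j)$.

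\textbf{Orientation-reversing case.} If $w(h,\gamma)=-1$, the image parametrization is clockwise, so
\[
h\bigl((p_i,p_j)^+_\gamma\bigr)=(h(p_j),h(p_i))^+_{h(\gamma)}.
\]
Hence $h(p_j)<_{h(\gamma)}h(p_i)$, and the successor relation is reversed for every pair. Since a cyclic order on a finite set is determined by its successor relation, $h$ preserves, respectively reverses, the cyclic order of $P$.

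\textbf{Main obstacle.} The only delicate point is making precise that the counterclockwise arc maps to the counterclockwise arc exactly when $h$ preserves orientation. I would handle it by unwinding the definition of orientation: ``preserves orientation'' means that the parametrization $h\circ\gamma(t)$ is counterclockwise whenever $\gamma(t)$ is. The arc $[p,q]^+_\gamma$ is by definition the image of the parameter interval traversed from $p$ to $q$, so its image under $h$ is the image-curve arc traversed in the induced direction. Everything else is bookkeeping.
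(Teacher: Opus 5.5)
Your proposal is correct and follows the paper's route. The paper derives the lemma in one line, by extending $h$ via the Schoenflies theorem and applying Lemma~\ref{PreserveReverse}. You make explicit the step the paper leaves implicit: $h$ carries $(p_i,p_j)^+_\gamma$ onto $(h(p_i),h(p_j))^+_{h(\gamma)}$ when $w(h,\gamma)=1$ and onto $(h(p_j),h(p_i))^+_{h(\gamma)}$ when $w(h,\gamma)=-1$.
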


Let $X$ be a definable set of dimension $1$. A point $p\in X$ is called a
\emph{topological regular point} if there exists a definable open neighborhood $U_p$ of $p$
in $\bb R^2$ such that $X\cap U_p$ is definably homeomorphic to an open interval in $\bb R$.

 Fix a point $p \in X$. By Hardt's triviality theorem \cite{Coste2000, Hardt1980, Dries1998}, for sufficiently small $r>0$, the topological type of
$X\cap \mathbb B_r(p)\setminus \{p\}$ is constant. We denote by $\deg(p,X)$ the number of connected
components of $X\cap \mathbb B_r(p)\setminus \{p\}$ for such $r$ and call it the \emph{degree of $p$
with respect to $X$}. Clearly,
$$
\deg(p,X)=2
\quad \Longleftrightarrow \quad
p \text{ is a topological regular point of } X.
$$

Assume now that $n:=\deg(p,X)\ge 3$, and let
$$
\{X_1^p,\dots,X_n^p\}
$$
be the connected components of $X\cap \mathbb B_r(p)\setminus \{p\}$.
For each $i=1,\dots,n$, the intersection $\mathbb S_r(p)\cap X_i^p$ consists of exactly one point, which we denote by $p_i$.

To define the local cyclic order at $p$, we need the following lemma.

\begin{lemma}\label{Not-depend} 
Let $\gamma,\sigma\subset \mathbb B_r(p)$ be definable Jordan curves such that
$p\in \Int(\gamma)\cap \Int(\sigma)$ and, for each $i=1,\dots,n$, each of the
intersections $X_i^p\cap \gamma$ and $X_i^p\cap \sigma$ consists of
exactly one point, denoted by $p_i$ and $q_i$, respectively. Then, for
all $1\le i\ne j\le n$,
$$
p_i<_\gamma p_j
\quad \Longleftrightarrow \quad
q_i<_\sigma q_j.
$$
\end{lemma}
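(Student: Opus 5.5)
The idea is to compare both curves with a fixed reference: the round circle $\mathbb S_\rho(p)$ for some $\rho<r$, chosen so small that it lies inside both $\Int(\gamma)$ and $\Int(\sigma)$. By Hardt triviality (shrinking $\rho$ if needed), each branch $X_i^p$ meets $\mathbb S_\rho(p)$ in exactly one point $s_i$. It then suffices to prove, for a single curve $\gamma$ as in the statement,
$$p_i<_\gamma p_j \iff s_i<_{\mathbb S_\rho(p)} s_j ,$$
and to apply the same argument to $\sigma$. Both cyclic orders on the $p_i$ and on the $q_i$ then agree with the one on the $s_i$, hence with each other.

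To compare $\gamma$ with $\mathbb S_\rho(p)$, I work in the closed annular region
$$A=\overline{\Int(\gamma)}\setminus \mathbb B_\rho(p).$$
Each branch $X_i^p$ is a definable half-open arc ending at $p$, and its closure meets $\mathbb S_\rho(p)$ once and $\gamma$ once. So its subarc $\beta_i$ from $s_i$ to $p_i$ is a definable arc in $A$ with one endpoint on each boundary curve and interior in the open annulus. (The interior avoids $\gamma$ because the intersection is a single point, and it avoids $\mathbb S_\rho(p)$ since $|x-p|$ is monotone along a branch for small radius, by the conic structure.) The arcs $\beta_1,\dots,\beta_n$ are pairwise disjoint.

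The key claim is that $n\ge 3$ disjoint cross-cuts of an annulus induce the same cyclic order on the two boundary circles, both traversed counterclockwise. I would prove this as follows.

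\begin{enumerate}
\item Fix $i\ne j$. The curve $\beta_i\cup[p_i,p_j]^+_\gamma\cup\beta_j\cup[s_i,s_j]^{\pm}$ is a Jordan curve, and so is the analogous curve with the other arc of $\gamma$.
\item Suppose $p_i<_\gamma p_j$, so that $(p_i,p_j)^+_\gamma$ contains no other $p_k$. Consider the two regions into which $\beta_i\cup\beta_j$ cuts the annulus.
\item By the Jordan curve theorem (applied as in Lemma~\ref{InOut}), any other $\beta_k$ is connected and disjoint from $\beta_i\cup\beta_j$. Hence it lies in exactly one of these regions, and its two endpoints $p_k$ and $s_k$ lie on the boundary arcs of that same region.
\item The region bounded using $(p_i,p_j)^+_\gamma$ contains no $p_k$, so it contains no $\beta_k$ and therefore no $s_k$. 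Thus the corresponding arc of $\mathbb S_\rho(p)$ between $s_i$ and $s_j$ is free of the other $s_k$.
\end{enumerate}

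It remains to identify that arc as the counterclockwise one $(s_i,s_j)^+$. This orientation bookkeeping is the main obstacle. To settle it, I would use Lemma~\ref{InOut} on the Jordan curve $\Gamma$ formed in step (1) together with $\gamma$ and $\mathbb S_\rho(p)$:
\begin{itemize}
\item $\Gamma$ shares the arc $[p_i,p_j]^+_\gamma$ with $\gamma$.
\item Near that arc, the interior of $\Gamma$ lies inside $\Int(\gamma)$, so case (i) of Lemma~\ref{InOut} applies.
\item Meanwhile $\Int(\Gamma)$ is disjoint from $\mathbb B_\rho(p)$, so along the shared arc with $\mathbb S_\rho(p)$ case (ii) applies.
\end{itemize}
With $\Gamma$ oriented to agree with $\gamma$ counterclockwise, case (i) shows that $\Gamma$ is counterclockwise. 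It traverses $\gamma$ from $p_i$ to $p_j$, then $\beta_j$, then the circle from $s_j$ back to $s_i$. Case (ii) then says the circle arc is traversed opposite to the counterclockwise orientation of $\mathbb S_\rho(p)$. So the arc used is $(s_i,s_j)^+$ run backwards, and we get $s_i<s_j$ counterclockwise. The converse follows by symmetry, since $<$ on $n$ points determines a successor function.

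Finally, the case where the annulus degenerates, which could happen if $\gamma$ touches the small disc, is avoided by the choice of $\rho$ small enough that $\mathbb B_\rho(p)\subset\Int(\gamma)\cap\Int(\sigma)$; this is possible because both interiors are open and contain $p$.
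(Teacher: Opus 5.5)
Your proof is correct, but it takes a different route from the paper's. The paper compares $\gamma$ and $\sigma$ directly, in three steps:
\begin{enumerate}
\item It places $(p_i,p_j)^+_\gamma$ in a component $U$ of $\mathbb B_r(p)\setminus X$.
\item It asserts that $U$ is the local sector between $X_i^p$ and $X_j^p$. Hence $\partial U\cap\sigma=\{q_i,q_j\}$, and $(q_i,q_j)^+_\sigma$ is either contained in $U$ or disjoint from it.
\item It excludes the second alternative by contradiction. It builds auxiliary Jordan curves $\gamma_1,\sigma_1$ through $p$ from the branch segments, and repeated use of Lemma~\ref{InOut} yields both $o(\gamma_1,\sigma_1)=1$ and $o(\gamma_1,\sigma_1)=-1$.
\end{enumerate}

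You instead interpose a canonical third curve, the small circle $\mathbb S_\rho(p)$, which turns the possibly crossing pair $\gamma,\sigma$ into two nested pairs. This buys you three things:
\begin{enumerate}
\item In each annulus the branch pieces $\beta_k$ are disjoint cross-cuts, so the combinatorial step becomes a clean connectivity argument.
\item The orientation step is a direct computation with a single curve $\Gamma$, sharing one arc with $\gamma$ (case (i) of Lemma~\ref{InOut}) and one with the circle (case (ii)), rather than a proof by contradiction.
\item You also learn that the local cyclic order is simply the one read off on small round circles, which makes independence of the choice transparent.
\end{enumerate}

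The price is a planar fact you only gesture at. You need that $\beta_i\cup\beta_j$ cuts the open annulus into exactly two Jordan domains. Their boundaries must consist of $\beta_i\cup\beta_j$ together with complementary arcs of $\gamma$ and complementary arcs of $\mathbb S_\rho(p)$. The circle part is what lets you conclude that no $s_k$ lies on the circle arc bounding the first region. This follows from the $\theta$-curve theorem. Alternatively, use Euler's formula for the plane graph $\gamma\cup\mathbb S_\rho(p)\cup\beta_i\cup\beta_j$: it has four vertices and six edges, hence four faces, two of which are $\Ext(\gamma)$ and $\mathbb B_\rho(p)$. This fact plays the role of the paper's unproved local-sector description of $U$, so the two routes are about equally self-contained.

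One small correction: require the closed disc $\overline{\mathbb B_\rho(p)}$, not just $\mathbb B_\rho(p)$, to lie in $\Int(\gamma)\cap\Int(\sigma)$. Otherwise the circle need not be disjoint from $\gamma$ and $\sigma$.
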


\begin{proof}
It is enough to prove one implication, since the converse follows by interchanging
the roles of $\gamma$ and $\sigma$.

Assume that $ p_i<_\gamma p_j .$
By the definition of $p_i<_\gamma p_j$, the open arc
$(p_i,p_j)^+_\gamma$ contains no point of $X$. Hence
$$
(p_i,p_j)^+_\gamma\cap X=\varnothing.
$$
Thus
$$
(p_i,p_j)^+_\gamma\subset \mathbb B_r(p)\setminus X.
$$
Since $(p_i,p_j)^+_\gamma$ is connected, it is contained in a connected component
$U$ of $\mathbb B_r(p)\setminus X$.

We first claim that if
$$
(q_i,q_j)^+_\sigma\not\subset U,
$$
then
$$
(q_i,q_j)^+_\sigma\cap U=\varnothing.
$$

Indeed, the component $U$ is the local sector determined by the two branches
$X_i^p$ and $X_j^p$ and containing the arc $(p_i,p_j)^+_\gamma$. Since
$(p_i,p_j)^+_\gamma$ contains no point $p_k$ with $k\neq i,j$, no branch
$X_k^p$, $k\neq i,j$, lies on the boundary of this sector. Therefore the
relative boundary of $U$ in $\mathbb B_r(p)$ is contained in
$$
X_i^p\cup X_j^p\cup\{p\}.
$$
Since $\sigma$ meets $X_i^p$ and $X_j^p$ exactly at $q_i$ and $q_j$, respectively,
and since $p\notin\sigma$, we get
$$
\partial U\cap \sigma=\{q_i,q_j\}.
$$
Suppose that
$$
(q_i,q_j)^+_\sigma\cap U\neq\varnothing.
$$
Since $U$ is open, the set
$$
(q_i,q_j)^+_\sigma\cap U
$$
is open in the arc $(q_i,q_j)^+_\sigma$. Moreover, the only possible boundary
points of this intersection on $\sigma$ are points of
$$
\partial U\cap\sigma=\{q_i,q_j\}.
$$
It follows that $$(q_i,q_j)^+_\sigma\cap U =(q_i,q_j)^+_\sigma$$ which 
contradicts the assumption that $(q_i,q_j)^+_\sigma\not\subset U$.
This proves the claim.

We now prove that
$$
(q_i,q_j)^+_\sigma\subset U.
$$
Suppose, by contradiction, that
$$
(q_i,q_j)^+_\sigma\not\subset U.
$$
By the claim,
$$
(q_i,q_j)^+_\sigma\cap U=\varnothing.
$$
Choose the orientations on $\gamma$ and $\sigma$ to be counterclockwise and define
$$
\gamma_1:=[p,p_i]_{X_i^p}\cup [p_i,p_j]^+_\gamma\cup [p_j,p]_{X_j^p},
$$
and
$$
\sigma_1:=[p,q_i]_{X_i^p}\cup [q_i,q_j]^+_\sigma\cup [q_j,p]_{X_j^p}.
$$
We orient $\gamma_1$ and $\sigma_1$ so that their orientations agree, respectively,
with those of
$$
[p_i,p_j]^+_\gamma
\quad\text{and}\quad
[q_i,q_j]^+_\sigma .
$$
By construction, the Jordan domain bounded by $\gamma_1$ is the sector $U$.
Thus
$$
\Int(\gamma_1)\subset U.
$$
On the other hand, since
$$
(q_i,q_j)^+_\sigma\cap U=\varnothing,
$$
the Jordan domain bounded by $\sigma_1$ lies on the opposite side of the common
branches $X_i^p$ and $X_j^p$. Hence
$$
\Int(\sigma_1)\cap U=\varnothing.
$$
Consequently,
$$
\Int(\gamma_1)\cap \Int(\sigma_1)=\varnothing.
$$

Moreover, the common arc $[p_i,p_j]^+_\gamma$ lies on the common boundary of
$\gamma$ and $\gamma_1$, and the interiors of $\gamma$ and $\gamma_1$ are locally
on the same side of this arc. Hence
$$
[p_i,p_j]^+_\gamma
\subset
\overline{\Int(\gamma)\cap \Int(\gamma_1)}.
$$
Similarly,
$$
[q_i,q_j]^+_\sigma
\subset
\overline{\Int(\sigma)\cap \Int(\sigma_1)}.
$$
By Lemma~\ref{InOut}, it follows that
$$
o(\gamma,\gamma_1)=1,
\qquad
o(\sigma,\sigma_1)=1.
$$
Since $\gamma$ and $\sigma$ are both oriented counterclockwise, we also have
$$
o(\gamma,\sigma)=1.
$$
Therefore
$$
o(\gamma_1,\sigma_1)=1.
$$

Now let $\alpha$ be a nontrivial common initial subarc of
$$
[p,p_i]_{X_i^p}
\quad\text{and}\quad
[p,q_i]_{X_i^p}.
$$
The curves $\gamma_1$ and $\sigma_1$ induce the same orientation on $\alpha$.
However, since
$$
\Int(\gamma_1)\cap\Int(\sigma_1)=\varnothing,
$$
we have
$$
\alpha\cap \overline{\Int(\gamma_1)\cap\Int(\sigma_1)}=\varnothing.
$$
Applying Lemma~\ref{InOut} again gives
$$
o(\gamma_1,\sigma_1)=-1.
$$
This contradicts the previous equality $o(\gamma_1,\sigma_1)=1$.
Therefore our assumption was false and we have
$$
(q_i,q_j)^+_\sigma\subset U.
$$
In particular, the  arc $[q_i,q_j]^+_\sigma$
contains no point $q_k$ with $k\neq i,j$. Hence, by definition,
$$
q_i<_\sigma q_j.
$$
The lemma is proved.
\end{proof}

Let $\gamma\subset \mathbb B_r(p)$ be a definable Jordan curve such that
$p\in \Int(\gamma)$ and each intersection $X_i^p\cap \gamma$ consists of exactly one point,
denoted by $p_i$. Such a curve will be called a \emph{Jordan curve around $p$}.

We define the local cyclic order at $p$ on the set of branches $\{X_1^p, \dots, X_n^p\}$ by stating that $X_i^p < X_j^p$ if and only if $p_i <_\gamma p_j$ for a Jordan curve $\gamma$ around $p$. Lemma \ref{Not-depend} ensures this order is independent of the choice of $\gamma$.

Let $h:X\to \bb R^2$ be a definable continuous injective map, and let $p\in X$ with
$\deg(p,X)=n\ge 3$. We say that $h$ \emph{preserves} the local cyclic order at $p$ if
$$
X_i^p<X_j^p \;\Longrightarrow\; h(X_i^p)<h(X_j^p)
\qquad \text{for all } 1\le i\ne j\le n,
$$
and \emph{reverses} the local cyclic order at $p$ if
$$
X_i^p<X_j^p \;\Longrightarrow\; h(X_j^p)<h(X_i^p)
\qquad \text{for all } 1\le i\ne j\le n.
$$
Accordingly, we define
$$
\ord(h,p)=\left\{
\begin{array}{rl}
 1, & \text{if $h$ preserves the local cyclic order at $p$,}\\
-1, & \text{if $h$ reverses the local cyclic order at $p$,}\\
0, & \text{if otherwise.}
\end{array}\right.
 $$

The following result follows directly from the
definition of local cyclic order.
\begin{lemma}\label{ord-remove}
Let $Y\subset X$ be a definable subset such that $\deg(p,Y)\ge 3$. If $\ord(h,p)\in \{\pm 1\}$,
then
$$
\ord(h|_Y,p)=\ord(h,p).
$$
\end{lemma}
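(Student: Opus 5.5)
The idea is to reduce the cyclic order of the branches of $Y$ at $p$ to the cyclic order of the branches of $X$ at $p$, and likewise for the images under $h$; then the preservation or reversal of the larger order passes to the smaller one. The proof has three steps.

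\emph{Step 1 (branches of $Y$).} Choose $r>0$ small enough that Hardt's theorem applies at $p$ simultaneously to $X$, $Y$, $h(X)$ and $h(Y)$ (all are definable). Each branch $Y_k^p$ of $Y\cap \mathbb B_r(p)\setminus\{p\}$ is a connected subset of $X\cap\mathbb B_r(p)\setminus\{p\}$, so it lies in a unique branch $X_{i(k)}^p$. After shrinking $r$, the local conic structure shows that $Y_k^p$ and $X_{i(k)}^p$ coincide near $p$. Indeed, $Y_k^p$ is a half-branch ending at $p$, and $X_{i(k)}^p$ is a definable arc emanating from $p$, so near $p$ the two must agree. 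It follows that $k\mapsto i(k)$ is injective.

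\emph{Step 2 (comparing cyclic orders).} Fix a Jordan curve $\gamma$ around $p$ for $X$, for instance a small circle $\mathbb S_\rho(p)$, which meets each $X_i^p$ in exactly one point $p_i$. By Step 1, $\gamma$ is also a Jordan curve around $p$ for $Y$, and it meets $Y_k^p$ at $p_{i(k)}$. Lemma~\ref{Not-depend} makes both local orders independent of the choice of curve, so both are read off from the single cyclic arrangement of the points $p_1,\dots,p_n$ on $\gamma$. The order of the $Y$-branches is therefore the cyclic order induced on the subset $\{p_{i(k)}\}$. Concretely, $Y_k^p<Y_l^p$ exactly when $p_{i(l)}$ is the first element of the subset met after $p_{i(k)}$ going counterclockwise.

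The same reasoning applies to the images. Since $h$ is a homeomorphism onto $h(X)$, it maps branches of $X$ at $p$ bijectively to branches of $h(X)$ at $h(p)$, and likewise for $Y$. Hence $h(Y_k^p)$ corresponds to the branch $h(X_{i(k)}^p)$ near $h(p)$, and the order of the $h(Y)$-branches is the cyclic order induced on the corresponding subset of the $h(X)$-branches.

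\emph{Step 3 (passing to the subset).} Assume $\ord(h,p)=1$, so that $h$ preserves the successor relation on all $n$ branches. This is a bijection between two $n$-element cyclic orders respecting successors, and such a map preserves the full cyclic order on all triples. An induced cyclic order on a subset is determined by the triples of that subset, so $h$ also preserves the successor relation of the induced order on the $Y$-branches. This gives $\ord(h|_Y,p)=1$. The case $\ord(h,p)=-1$ is identical, with reversal in place of preservation.

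The main obstacle is the technical point in Step 1: showing that each branch of $Y$ fills its branch of $X$ near $p$, so that a single curve $\gamma$ serves as a Jordan curve around $p$ for both sets. The plan is to use the definable curve selection and conic structure theorems at $p$ for this.
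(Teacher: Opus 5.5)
Your proposal is correct and takes the same approach as the paper, which only says the lemma ``follows directly from the definition of local cyclic order''; your Steps 1--3 spell out that justification. The $Y$-branches at $p$ are whole branches of $X$, so on a common small circle the order of the $Y$-branches (and of the $h(Y)$-branches) is the cyclic order induced on a subset, and a bijection that preserves or reverses the successor relation of a finite cyclic order preserves or reverses every induced order. The step you flag as the main obstacle is in fact easy: $Y\cap X_i^p$ is a definable subset of an arc emanating from $p$, hence a finite union of points and subarcs, so near $p$ it is either empty or the whole initial part of $X_i^p$.
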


Let $\gamma\in \mathfrak J(X)$ be an oriented definable Jordan curve, let
$p\in \gamma$, and let $\alpha$ be a definable arc with endpoint $p$ such that
$\alpha\cap \gamma=\{p\}$. Let $\gamma^p_1$ and $\gamma^p_2$ be the two local
branches of $\gamma$ at $p$, where $\gamma^p_1$ is the incoming branch and
$\gamma^p_2$ is the outgoing branch with respect to the given orientation of
$\gamma$. Let $\alpha^p$ denote the local branch of $\alpha$ at $p$.

\begin{lemma}\label{OCO}
The curve $\gamma$ is counterclockwise if and only if one of the following two
conditions holds:
\begin{enumerate}
\item[(i)] if $\alpha\setminus\{p\}\subset \operatorname{Ext}(\gamma)$, then
$$
\gamma^p_1<\alpha^p<\gamma^p_2<\gamma^p_1;
$$
\item[(ii)] if $\alpha\setminus\{p\}\subset \Int(\gamma)$, then
$$
\gamma^p_1<\gamma^p_2<\alpha^p<\gamma^p_1.
$$
\end{enumerate}
\end{lemma}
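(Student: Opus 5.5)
Since $\deg(p,\gamma\cup\alpha)=3$, the three branches $\gamma_1^p,\gamma_2^p,\alpha^p$ carry the local cyclic order defined via Lemma~\ref{Not-depend}. With three elements there are only two cyclic orders, namely $\gamma^p_1<\alpha^p<\gamma^p_2<\gamma^p_1$ and $\gamma^p_1<\gamma^p_2<\alpha^p<\gamma^p_1$. The plan is therefore to prove one implication in each case: if $\gamma$ is counterclockwise and $\alpha\setminus\{p\}\subset\Ext(\gamma)$ (resp.\ $\Int(\gamma)$), then the order is the first (resp.\ second) one. The converse then follows by reversing the orientation of $\gamma$. Reversal swaps $\gamma^p_1$ and $\gamma^p_2$, which turns each cyclic order into the other one, and it also turns a counterclockwise curve into a clockwise one.

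To compute the local order, I choose a small definable Jordan curve $\sigma$ around $p$, for instance a small circle $\mathbb S_r(p)$. By Hardt triviality, for $r$ small each of the three branches meets $\sigma$ in exactly one point; call these $a_1,a_2,b$, lying on $\gamma_1^p$, $\gamma_2^p$, $\alpha^p$ respectively. The two points $a_1,a_2$ split $\sigma$ into two open arcs. The arc $\beta$ of $\sigma$ that does not contain $b$ lies in $\mathbb B_r(p)\setminus(\gamma\cup\alpha)$, so $\beta$ lies entirely in $\Int(\gamma)$ or entirely in $\Ext(\gamma)$, and it lies on the side opposite to the one containing $b$. Hence in case (i) we have $\beta\subset\Int(\gamma)$, and in case (ii) we have $\beta\subset\Ext(\gamma)$.

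Next I form the Jordan curve $\gamma' = [a_1,p]_{\gamma}\cup[p,a_2]_{\gamma}\cup\overline{\beta}$, oriented so that it agrees with $\gamma$ on the common arc near $p$, traversing $a_1\to p\to a_2$ and then $\beta$ back from $a_2$ to $a_1$. Consider first case (i). There $\beta$ is in $\Int(\gamma)$, and the small sector bounded by $\gamma'$ lies on the interior side of $\gamma$ along the common arc. By Lemma~\ref{InOut}(i), the agreement of orientations gives $o(\gamma,\gamma')=1$, so $\gamma'$ is counterclockwise. Consequently $\beta$, traversed from $a_2$ to $a_1$, is a counterclockwise subarc of $\gamma'$.

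I then need to turn this into a statement about $\sigma$. The bounded component of $\mathbb R^2\setminus\gamma'$ is the sector of the disc cut off by $\beta$, and that sector is also part of $\Int(\sigma)$. The curves $\gamma'$ and $\sigma$ share the arc $\overline\beta$, and their interiors lie on the same side of it. Applying Lemma~\ref{InOut} to $\sigma$ (oriented counterclockwise) and $\gamma'$ shows that $\beta=(a_2,a_1)^+_\sigma$. This means $a_2<_\sigma a_1$, and hence the cyclic order on $\sigma$ is $a_1,b,a_2$, i.e.\ $\gamma_1^p<\alpha^p<\gamma_2^p<\gamma_1^p$.

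Case (ii) is symmetric. There $\beta\subset\Ext(\gamma)$, so Lemma~\ref{InOut}(ii) applies. It forces $\gamma'$ to be clockwise, so that $\beta=(a_1,a_2)^+_\sigma$, which gives the order $\gamma^p_1<\gamma^p_2<\alpha^p$.

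I expect the main obstacle to be the rigorous identification of $\Int(\gamma')$ with the sector and the comparison of its side with $\Int(\gamma)$ near $p$. In the paper's style this is handled by the same local-sector argument used in Lemma~\ref{Not-depend}. The sector's relative boundary in $\mathbb B_r(p)$ is contained in the two branches of $\gamma$ together with $p$, and since $\gamma'$ is a Jordan curve contained in $\overline{\mathbb B_r(p)}$, its interior must be the component that avoids the unbounded region.
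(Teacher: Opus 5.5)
Your proof is correct, and it reaches the key step by a different route from the paper.

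\textbf{The paper's proof.} It uses the same small circle $C=\mathbb S_r(p)$ and the same three intersection points. It then reads off $C\cap\Int(\gamma)=(p_2,p_1)^+_C$ directly from the definition of counterclockwise: the interior lies on the left of the passage $p_1\to p\to p_2$. For the converse it only remarks that the position of $q$ determines which side of $\gamma$ is the interior.

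\textbf{Your proof.} You derive that identification instead of asserting it. You introduce the sector curve $\gamma'=[a_1,p]_\gamma\cup[p,a_2]_\gamma\cup\overline\beta$ and apply Lemma~\ref{InOut} twice.
\begin{itemize}
\item First to $(\gamma,\gamma')$ along the common arc through $p$, which fixes the orientation of $\gamma'$.
\item Then to $(\sigma,\gamma')$ along $\overline\beta$, which transfers that orientation to the counterclockwise circle.
\end{itemize}
The signs come out right in both cases. In case (i) you get $o(\gamma,\gamma')=1$ and $\beta=(a_2,a_1)^+_\sigma$. In case (ii) you get $o(\gamma,\gamma')=-1$ and $\beta=(a_1,a_2)^+_\sigma$. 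Your converse is also more precise than the paper's. Reversing $\gamma$ swaps $\gamma_1^p$ and $\gamma_2^p$, and three branches admit exactly two cyclic orders, so the reversed curve is forced into the other order.

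\textbf{Trade-off.} Your route reduces the lemma to Lemma~\ref{InOut}, in the same style as the proof of Lemma~\ref{Not-depend}. The cost is that you need two facts that you only assert:
\begin{itemize}
\item $\Int(\gamma')$ is the sector of the disc cut off by $\beta$;
\item $\beta$ and the arc of $\sigma$ through $b$ lie on opposite sides of $\gamma$.
\end{itemize}
Both follow from two standard facts: the arc of $\gamma$ through $p$ splits the disc into two pieces (theta-curve theorem), and $p$ lies in the boundary of both $\Int(\gamma)$ and $\Ext(\gamma)$. This is at least as rigorous as the paper's appeal to the ``left-hand side''.

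One small slip: with $\sigma=\mathbb S_r(p)$, the arc $\beta$ lies in $\sigma\setminus(\gamma\cup\alpha)$, not in $\mathbb B_r(p)\setminus(\gamma\cup\alpha)$.
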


\begin{proof}
Choose $r>0$ sufficiently small so that the circle
$$
C:=S_r(p)
$$
meets each of the three local branches $\gamma^p_1,\gamma^p_2,\alpha^p$ in exactly
one point. Denote these points by
$$
p_1=C\cap \gamma^p_1,\qquad
p_2=C\cap \gamma^p_2,\qquad
q=C\cap \alpha^p.
$$
We orient $C$ counterclockwise.

Since $\gamma^p_1$ is the incoming branch and $\gamma^p_2$ is the outgoing
branch, the curve $\gamma$ passes through $p$ locally from $p_1$ to $p_2$.
If $\gamma$ is counterclockwise, then the interior of $\gamma$ lies locally on
the left-hand side of this oriented passage through $p$. Therefore, on the small
circle $C$, the points of $C$ lying in $\Int(\gamma)$ are exactly
the points of the open counterclockwise arc
$$
(p_2,p_1)^+_C.
$$
Equivalently,
$$
C\cap \Int(\gamma)=(p_2,p_1)^+_C
\quad \text{ and} \quad
C\cap \operatorname{Ext}(\gamma)=(p_1,p_2)^+_C.
$$

If $\alpha\setminus\{p\}\subset \Ext(\gamma)$, then $q\in (p_1,p_2)^+_C$. Therefore
$$
p_1<_C q<_C p_2<_C p_1,
$$
which means
$$
\gamma_1^p<\alpha^p<\gamma_2^p<\gamma_1^p.
$$

If $\alpha\setminus\{p\}\subset \Int(\gamma)$, then $q\in (p_2,p_1)^+_C$, hence
$$
p_1<_C p_2<_C q<_C p_1,
$$
which means
$$
\gamma_1^p<\gamma_2^p<\alpha^p<\gamma_1^p.
$$

Conversely, if one of the two cyclic orders in (i) or (ii) holds, then the point $q$ lies respectively
on the exterior arc $(p_1,p_2)^+_C$ or on the interior arc $(p_2,p_1)^+_C$. This determines which
side of $\gamma$ is the interior near $p$, and therefore shows that $\gamma$ is counterclockwise.
\end{proof}

\begin{remark}\label{OCOR}\rm
\begin{enumerate}
\item[(a)] From the proof of Lemma~\ref{OCO}, it is clear that the induced orientation on
$\gamma$ does not depend on the choice of the point $p\in \gamma$ or of the auxiliary arc
$\alpha$.
\item[(b)] Similarly, one can prove that $\gamma$ is clockwise if and only if one of the following
holds:
\begin{itemize}
\item $\alpha\cap \Int(\gamma)=\varnothing$ and the cyclic order at $p$ in $\gamma\cup \alpha$ is
$$
\gamma_1^p<\gamma_2^p<\alpha^p<\gamma_1^p;
$$
\item $\alpha\setminus \{p\}\subset \Int(\gamma)$ and the cyclic order at $p$ in $\gamma\cup \alpha$ is
$$
\gamma_1^p<\alpha^p<\gamma_2^p<\gamma_1^p.
$$
\end{itemize}
\end{enumerate}
\end{remark}

The next lemma relates the local cyclic order at a point to the orientation of a Jordan
curve passing through that point.

\begin{lemma}\label{ord-add1}
Let $\gamma\in \ak J(X)$, let $p\in \gamma$, and let $\alpha\subset X$ be a definable arc such that
$\alpha\cap \gamma=\{p\}$. Assume that $\ord(h,p)\in\{\pm 1\}$. Then
$$
\ord(h,p)=w(h,\gamma)
$$
if and only if one of the following holds:
\begin{enumerate}
\item $\alpha\cap \Int(\gamma)=\varnothing$ and $h(\alpha)\cap \Int(h(\gamma))=\varnothing$;
\item $\alpha\setminus \{p\}\subset \Int(\gamma)$ and
      $h(\alpha)\setminus \{h(p)\}\subset \Int(h(\gamma))$.
\end{enumerate}
\end{lemma}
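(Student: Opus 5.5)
The plan is to reduce everything to Lemma~\ref{OCO} and Remark~\ref{OCOR}(b), applied once to $\gamma$ and once to $h(\gamma)$, and then to compare the resulting cyclic orders at $p$ and at $h(p)$. Since $\alpha\cap\gamma=\{p\}$, the set $\alpha\setminus\{p\}$ is connected and disjoint from $\gamma$. Hence it lies entirely in $\Int(\gamma)$ or entirely in $\Ext(\gamma)$. Likewise, $h(\alpha)\setminus\{h(p)\}$ lies entirely in $\Int(h(\gamma))$ or in $\Ext(h(\gamma))$, because $h$ is injective. So there are four combinations, and conditions (1) and (2) are exactly the two ``matching'' ones. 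It therefore suffices to show that $\ord(h,p)=w(h,\gamma)$ in the matching cases and $\ord(h,p)=-w(h,\gamma)$ in the mismatched cases.

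Set $Y=\gamma\cup\alpha\subset X$. Near $p$, the set $Y$ has exactly three branches $\gamma^p_1,\gamma^p_2,\alpha^p$, so $\deg(p,Y)=3$. By Lemma~\ref{ord-remove}, $\ord(h|_Y,p)=\ord(h,p)$. Since $h$ is injective and continuous, the branches of $h(Y)$ at $h(p)$ are $h(\gamma^p_1),h(\gamma^p_2),h(\alpha^p)$.

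Orient $\gamma$ counterclockwise and give $h(\gamma)$ the image orientation. Then $h(\gamma^p_1)$ is the incoming branch and $h(\gamma^p_2)$ the outgoing branch of $h(\gamma)$ at $h(p)$. By definition of $w$, $h(\gamma)$ is counterclockwise if and only if $w(h,\gamma)=1$. On three branches there are only two cyclic orders, namely $\gamma^p_1<\alpha^p<\gamma^p_2<\gamma^p_1$ and its reverse. Lemma~\ref{OCO} determines which one holds at $p$ from whether $\alpha$ is exterior or interior.

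At $h(p)$, Lemma~\ref{OCO} (if $w=1$) or Remark~\ref{OCOR}(b) (if $w=-1$) determines the order of the image branches. It does so from the counterclockwise or clockwise nature of $h(\gamma)$ and from whether $h(\alpha)$ is exterior or interior. Going through the cases:

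\begin{itemize}
\item \emph{$\alpha$ exterior, $h(\alpha)$ exterior, $w=1$.} Both orders read $1<\alpha<2$, so $h$ preserves the order and $\ord=1=w$.
\item \emph{$\alpha$ exterior, $h(\alpha)$ exterior, $w=-1$.} The image order is $1<2<\alpha$, which is reversed, so $\ord=-1=w$.
\item \emph{$\alpha$ exterior, $h(\alpha)$ interior.} Each of these cases flips exactly one of the two inputs, so $\ord=-w$.
\end{itemize}

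The remaining cases behave the same way by symmetry. In each one, matching positions give $\ord=w$ and mismatched positions give $\ord=-w$; this holds because $\ord\in\{\pm1\}$ and $w\in\{\pm1\}$. This proves both directions of the equivalence.

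The main obstacle is making sure that preservation or reversal of the three-branch cyclic order at $p$ is correctly identified with the sign $\ord(h|_Y,p)$, given that the comparison at $h(p)$ uses a Jordan curve around $h(p)$. Lemma~\ref{OCO} is phrased with small circles $S_r(p)$, and a small circle around $h(p)$ is a valid Jordan curve around $h(p)$ for $h(Y)$. Lemma~\ref{Not-depend} then guarantees that the order does not depend on this choice, so the bookkeeping is consistent. The rest is a finite case check.
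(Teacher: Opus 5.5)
Your proposal is correct and follows essentially the paper's argument: reduce to $Y=\gamma\cup\alpha$ via Lemma~\ref{ord-remove}, orient $\gamma$ counterclockwise, and read off the three-branch cyclic orders at $p$ and $h(p)$ from Lemma~\ref{OCO} and Remark~\ref{OCOR}(b). The only difference is bookkeeping: the paper fixes the position of $\alpha$ and splits on the value of $\ord(h,p)$, whereas you split on the positions and on $w(h,\gamma)$. Both amount to the same finite case check, which your sign argument handles correctly.
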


\begin{proof}
By Lemma~\ref{ord-remove},
$$
\ord(h|_{\gamma\cup \alpha},p)=\ord(h,p),
$$
so we may assume $X=\gamma\cup \alpha$.

Assume that $\gamma$ is counterclockwise. We consider the case
$\alpha\cap \Int(\gamma)=\varnothing$; the other case is analogous. By Lemma~\ref{OCO}, the
cyclic order at $p$ is
$$
\gamma_1^p<\alpha^p<\gamma_2^p<\gamma_1^p.
$$

If $\ord(h,p)=1$, then the cyclic order at $h(p)$ on $h(\gamma\cup \alpha)$ is
$$
h(\gamma_1^p)<h(\alpha^p)<h(\gamma_2^p)<h(\gamma_1^p).
$$
Hence, by Lemma~\ref{OCO},
$$
w(h,\gamma)=1
\quad \Longleftrightarrow \quad
h(\gamma)\text{ is counterclockwise}
\quad \Longleftrightarrow \quad
h(\alpha)\cap \Int(h(\gamma))=\varnothing.
$$
Thus $\ord(h,p)=w(h,\gamma)$ if and only if
$h(\alpha)\cap \Int(h(\gamma))=\varnothing$.

If $\ord(h,p)=-1$, then the cyclic order at $h(p)$ becomes
$$
h(\gamma_1^p)<h(\gamma_2^p)<h(\alpha^p)<h(\gamma_1^p).
$$
By Remark~\ref{OCOR}(b),
$$
w(h,\gamma)=-1
\quad \Longleftrightarrow \quad
h(\gamma)\text{ is clockwise}
\quad \Longleftrightarrow \quad
h(\alpha)\cap \Int(h(\gamma))=\varnothing.
$$
Again we obtain $\ord(h,p)=w(h,\gamma)$ if and only if
$h(\alpha)\cap \Int(h(\gamma))=\varnothing$.
This proves the lemma.
\end{proof}

\begin{lemma}\label{lem_theta}
Let $\alpha,\beta,\delta$ be pairwise internally disjoint definable arcs with the same endpoints
$p,q$. Assume that
$$
\gamma=\alpha\cup\beta
$$
is the outer Jordan curve, i.e., $\mathring{\delta}\subset \Int(\gamma)$, so
$$
\gamma_1=\alpha\cup\delta,\qquad
\gamma_2=\delta\cup\beta
$$
are the two Jordan curves contained in $\overline{\Int(\gamma)}$.
Equivalently,
$$
\Int(\gamma)
=
\Int(\gamma_1)
\cup \mathring{\delta}
\cup \Int(\gamma_2).
$$

Let $h:\alpha\cup\beta\cup\delta\to \mathbb R^2$ be a definable embedding. Suppose that for some $c\in\{\pm1\}$,
$$
\ord(h,p)=c,\qquad
\ord(h,q)=c,
$$
and
$$
w(h,\gamma_1)=c,\qquad w(h,\gamma_2)=c.
$$
Then
$$
w(h,\gamma)=c.
$$
\end{lemma}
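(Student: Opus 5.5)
The plan is to pass the information from $\gamma_1$ to $\gamma$ through the common arc $\alpha$, using Lemma~\ref{ord-add1} at $p$ and then Lemma~\ref{InOut}. It has two steps: first show that the image $\theta$-graph $h(\alpha)\cup h(\beta)\cup h(\delta)$ is nested the same way as the source, with $h(\gamma)$ the outer curve; then compare orientations along $h(\alpha)$.

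\textbf{Step 1 (position of $h(\beta)$ relative to $h(\gamma_1)$).} Since $\Int(\gamma)=\Int(\gamma_1)\cup\mathring\delta\cup\Int(\gamma_2)$, the open arc $\mathring\beta$ lies in $\Ext(\gamma_1)$. Take a short initial subarc $\beta'\subset\beta$ starting at $p$, so that $\beta'\cap\gamma_1=\{p\}$ and $\beta'\cap\Int(\gamma_1)=\varnothing$. We have $\deg(p,\gamma_1\cup\beta')=3$ and $\ord(h,p)=c$, so Lemma~\ref{ord-remove} gives $\ord(h|_{\gamma_1\cup\beta'},p)=c=w(h,\gamma_1)$. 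Lemma~\ref{ord-add1} then forces $h(\beta')\cap\Int(h(\gamma_1))=\varnothing$. The set $h(\mathring\beta)$ is connected and disjoint from $h(\gamma_1)$, so it lies entirely in $\Ext(h(\gamma_1))$. The symmetric argument at $p$ (or at $q$) with $\gamma_2$ and a short subarc of $\alpha$ gives $h(\mathring\alpha)\subset\Ext(h(\gamma_2))$.

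\textbf{Step 2 (the image $\theta$-graph has $h(\gamma)$ outermost).} For three internally disjoint arcs with common endpoints, the $\theta$-curve theorem (Jordan curve theorem applied to each of the three Jordan curves) says that exactly one of the three Jordan curves contains the open third arc in its interior, and the other two each have the remaining arc in their exterior. By Step 1, the curve $h(\gamma_1)$ is not the outer one, and neither is $h(\gamma_2)$. Hence $h(\gamma)=h(\alpha)\cup h(\beta)$ is the outer curve, and
\[
\Int(h(\gamma))=\Int(h(\gamma_1))\cup h(\mathring\delta)\cup\Int(h(\gamma_2)).
\]
In particular, both in the source and in the target, the interiors of $\gamma,\gamma_1$ (respectively $h(\gamma),h(\gamma_1)$) lie on the same side of the common arc $\alpha$ (respectively $h(\alpha)$). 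That is, $\alpha\subset\overline{\Int(\gamma)\cap\Int(\gamma_1)}$, and similarly for the images.

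\textbf{Step 3 (orientation comparison).} Orient $\gamma$ and $\gamma_1$ counterclockwise. By case (i) of Lemma~\ref{InOut}, they induce the same orientation on $\alpha$. Now push these orientations forward by $h$: $h(\gamma)$ and $h(\gamma_1)$ induce the same orientation on $h(\alpha)$, since it is the image of a common orientation. By Step 2 we are again in case (i) of Lemma~\ref{InOut}, so $o(h(\gamma),h(\gamma_1))=1$ for these pushed-forward orientations. Thus $h$ either preserves the orientations of both curves or reverses both, and $w(h,\gamma)=w(h,\gamma_1)=c$.

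\textbf{Main obstacle.} I expect the main difficulty to be Step 2, namely the trichotomy for $\theta$-graphs: exactly one of the three Jordan curves bounds a region containing the third arc. I would prove it directly. Pick a point of $h(\mathring\delta)$ and decide whether it lies in $\Int(h(\gamma))$ or in $\Ext(h(\gamma))$. In the latter case, decompose $\Ext$ near $h(\alpha)$ to see that one of $h(\mathring\alpha)$, $h(\mathring\beta)$ would lie inside $h(\gamma_2)$ or $h(\gamma_1)$. That would contradict Step 1.
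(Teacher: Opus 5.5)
Your proof is correct, but it takes a different route from the paper's.

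\textbf{The paper's route.} The paper obtains the nesting of the image $\theta$-graph from the orientation hypotheses alone.
\begin{itemize}
\item With $\gamma_1,\gamma_2$ oriented counterclockwise, Lemma~\ref{InOut} shows they induce opposite orientations on $\delta$.
\item After pushing forward, $w(h,\gamma_1)=w(h,\gamma_2)$ and Lemma~\ref{InOut} again put the interiors of $h(\gamma_1)$ and $h(\gamma_2)$ on opposite sides of $h(\delta)$. Hence $h(\mathring\delta)\subset\Int(h(\gamma))$.
\item Only then does it apply Lemma~\ref{ord-add1} to $\gamma$ and $\delta$, reading off $w(h,\gamma)=\ord(h,p)=c$.
\end{itemize}

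\textbf{Your route.} You swap the roles of the two lemmas. Lemma~\ref{ord-add1}, applied twice ($\gamma_1$ with $\beta'$, and $\gamma_2$ with a short piece of $\alpha$), produces the nesting. Lemma~\ref{InOut} along the shared arc $\alpha$ then transfers $w(h,\gamma_1)$ to $w(h,\gamma)$.

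\textbf{What each buys.} The paper's argument is shorter, using each lemma once. It also shows that the target nesting already follows from $w(h,\gamma_1)=w(h,\gamma_2)$ with no cyclic-order input. In yours, the last step $w(h,\gamma)=w(h,\gamma_1)$ depends only on the nesting. However, reaching the nesting requires $\ord(h,p)$ to equal both $w(h,\gamma_1)$ and $w(h,\gamma_2)$. Neither argument really needs $\ord(h,q)=c$.

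\textbf{Common ingredient.} Both proofs rest on the $\theta$-curve theorem. The paper uses it tacitly (``the two image curves bound the two small regions separated by $h(\delta)$''). Your sketch of it under ``Main obstacle'' is loose. It is enough to cite the standard fact that the complement of a $\theta$-graph has exactly three components, bounded by the three Jordan curves.

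\textbf{A point in your favour.} You pass to a short subarc $\beta'$ with $\beta'\cap\gamma_1=\{p\}$, which is exactly the hypothesis of Lemma~\ref{ord-add1}. The paper applies that lemma directly to $\delta$, which meets $\gamma$ at both $p$ and $q$.
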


\begin{proof}
Orient all three curves $\gamma,\gamma_1,\gamma_2$ counterclockwise.

The curves $\gamma_1$ and $\gamma_2$ share the arc $\delta$. Since
$\gamma_1$ and $\gamma_2$ are the two smaller Jordan curves inside
$\gamma$, their interiors lie on opposite sides of $\delta$. Hence, by
Lemma 1.2, the orientations induced by $\gamma_1$ and $\gamma_2$ on
$\delta$ are opposite.
Therefore, the orientations induced by $h(\gamma_1)$ and $h(\gamma_2)$ on
$h(\delta)$ are also opposite.

Since
$$
w(h,\gamma_1)=w(h,\gamma_2)=c,
$$
the map $h$ either preserves both orientations or reverses both orientations,  so 
$$o(h(\gamma_1),h(\gamma_2))=o(\gamma_1,\gamma_2)=1.$$

Again by Lemma 1.2, the interiors of
$h(\gamma_1)$ and $h(\gamma_2)$ lie on opposite sides of $h(\delta)$.
Thus the two image curves $h(\gamma_1)$ and $h(\gamma_2)$ bound the two
small regions separated by the common arc $h(\delta)$.
Consequently,
$$
h(\mathring{\delta})\subset \Int(h(\gamma)).$$
This, together with the assumption $\mathring{\delta}\subset \Int(\gamma)$ and Lemma 1.7, gives
$$
\ord(h,p)=w(h,\gamma).
$$
By assumption,
$
\ord(h,p)=c.
$
Therefore
$
w(h,\gamma)=c.
$
\end{proof}

Given two Jordan curves  $\gamma_1,\gamma_2$,
we write $\gamma_1\leq \gamma_2$,
if
$$
\Int(\gamma_1)\subset \Int(\gamma_2);
$$
and, write $\gamma_1<\gamma_2$ if and only if $\gamma_1 \leq \gamma_2$ and $\gamma_1\neq \gamma_2$.

\begin{lemma}\label{BS}
Let $\gamma_1\neq \gamma_2$ be Jordan curves. 
Then
$$
\gamma_1\subset \overline{\Int(\gamma_2)}
\quad\Longleftrightarrow\quad
\gamma_1<\gamma_2.
$$
\end{lemma}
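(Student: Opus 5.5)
We prove the two implications separately, working throughout with the decomposition $\mathbb R^2=\Int(\gamma_2)\sqcup\gamma_2\sqcup\Ext(\gamma_2)$ and the facts that $\Int(\gamma)$ is bounded and $\partial\Int(\gamma)=\gamma=\partial\Ext(\gamma)$ for every Jordan curve $\gamma$ (Jordan curve theorem).

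\emph{($\Leftarrow$)} Suppose $\gamma_1<\gamma_2$, so $\Int(\gamma_1)\subset\Int(\gamma_2)$. Taking closures gives $\overline{\Int(\gamma_1)}\subset\overline{\Int(\gamma_2)}$. Since $\gamma_1=\partial\Int(\gamma_1)\subset\overline{\Int(\gamma_1)}$, we conclude $\gamma_1\subset\overline{\Int(\gamma_2)}$. This direction needs only the boundary property.

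\emph{($\Rightarrow$)} Suppose $\gamma_1\subset\overline{\Int(\gamma_2)}=\Int(\gamma_2)\cup\gamma_2$. Then $\gamma_1\cap\Ext(\gamma_2)=\varnothing$, so the connected set $\Ext(\gamma_2)$ lies in $\mathbb R^2\setminus\gamma_1$. Since $\Ext(\gamma_2)$ is unbounded, it must lie in the unbounded component $\Ext(\gamma_1)$. Therefore the closed set $\overline{\Ext(\gamma_2)}=\Ext(\gamma_2)\cup\gamma_2$ is contained in $\overline{\Ext(\gamma_1)}=\Ext(\gamma_1)\cup\gamma_1$, which is disjoint from $\Int(\gamma_1)$. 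Taking complements, $\Int(\gamma_1)\subset\mathbb R^2\setminus\overline{\Ext(\gamma_2)}=\Int(\gamma_2)$, that is, $\gamma_1\le\gamma_2$.

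It remains to show strictness. We read $\gamma_1\ne\gamma_2$ as distinct curves, so $\gamma_1<\gamma_2$ means $\Int(\gamma_1)\ne\Int(\gamma_2)$. If $\Int(\gamma_1)=\Int(\gamma_2)$, then taking boundaries gives $\gamma_1=\partial\Int(\gamma_1)=\partial\Int(\gamma_2)=\gamma_2$, which contradicts the hypothesis. Hence $\gamma_1<\gamma_2$.

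The only step needing care is the identity $\partial\Int(\gamma)=\gamma$, together with the analogous identity for the exterior. This is the standard sharp form of the Jordan curve theorem, which we invoke. It is also the one point where $\gamma_1\ne\gamma_2$ is used.
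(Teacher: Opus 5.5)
Your proof is correct and follows the paper's argument essentially verbatim. The reverse direction uses $\gamma_1\subset\overline{\Int(\gamma_1)}$, and the forward direction traps the connected, unbounded set $\Ext(\gamma_2)$ inside $\Ext(\gamma_1)$ and then passes to complements; your closure step spells out what the paper compresses into ``$\overline{\Int(\gamma_1)}\subset\overline{\Int(\gamma_2)}$, hence $\Int(\gamma_1)\subset\Int(\gamma_2)$''.

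The only difference is at the end. The paper defines $\gamma_1<\gamma_2$ as $\gamma_1\le\gamma_2$ together with $\gamma_1\neq\gamma_2$, so strictness follows immediately from the hypothesis. Your boundary argument for strictness is valid but not needed.
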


\begin{proof}
If $\gamma_1<\gamma_2$, then $\Int(\gamma_1)\subset \Int(\gamma_2)$ and hence
$$
\gamma_1\subset \overline{\Int(\gamma_1)}\subset \overline{\Int(\gamma_2)}.
$$

Conversely, assume that $\gamma_1\subset \overline{\Int(\gamma_2)}$ and $\gamma_1\neq \gamma_2$.

Since $\gamma_1 \subset \overline{\Int(\gamma_2)}$, we have
$$
\Ext(\gamma_2)= \mathbb R^2 \setminus \overline{\Int(\gamma_2)}
\subset \mathbb R^2 \setminus \gamma_1
= \Int(\gamma_1) \sqcup \Ext(\gamma_1).
$$
As $\Ext(\gamma_2)$ is connected, either $\Ext(\gamma_2) \subset \Int(\gamma_1)$ or $\Ext(\gamma_2) \subset \Ext(\gamma_1)$.
The first possibility is impossible, since $\Ext(\gamma_2)$ is unbounded whereas $\Int(\gamma_1)$ is bounded.
Therefore $\Ext(\gamma_2) \subset \Ext(\gamma_1)$, which implies
$$
\overline{\Int(\gamma_1)} \subset \overline{\Int(\gamma_2)}.
$$
It follows that $\Int(\gamma_1) \subset \Int(\gamma_2)$, i.e.\ $\gamma_1\leq \gamma_2$. But $\gamma_1 \neq \gamma_2$, hence $\gamma_1 < \gamma_2$.
\end{proof}

\subsection{The Definable Schoenflies Theorem}
\begin{theorem}[Definable Schoenflies theorem]\label{Thm_definable_Schoenflies}
Let $\gamma \subset \mathbb R^2$ be a definable Jordan curve, and let 
$h:\gamma \to \mathbb R^2$ be a definable embedding. 
Then there exists a definable homeomorphism 
$H:\mathbb R^2 \to \mathbb R^2$ such that
$$
H|_\gamma = h .
$$
\end{theorem}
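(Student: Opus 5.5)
We will prove the definable Schoenflies theorem by reducing both $\gamma$ and $h(\gamma)$ to a standard model, namely the unit circle $\mathbb S_1$, by definable homeomorphisms of the plane. Once we have definable homeomorphisms $\Phi,\Psi:\mathbb R^2\to\mathbb R^2$ with $\Phi(\gamma)=\mathbb S_1$ and $\Psi(h(\gamma))=\mathbb S_1$, the map $f:=\Psi\circ h\circ\Phi^{-1}|_{\mathbb S_1}$ is a definable self-homeomorphism of the circle. We then extend $f$ radially (Alexander trick): set $F(t\theta)=t f(\theta)$ for $t\ge 0$, $\theta\in\mathbb S_1$. This $F$ is a definable homeomorphism of $\mathbb R^2$, since it is given by a definable formula and is continuous and bijective with continuous inverse of the same form. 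The desired extension is then $H=\Psi^{-1}\circ F\circ\Phi$.

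Everything therefore reduces to the following claim: every definable Jordan curve $\gamma$ is carried onto $\mathbb S_1$ by a definable homeomorphism of $\mathbb R^2$. We prove it in four steps.

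\begin{enumerate}
\item Apply a definable cell decomposition of $\mathbb R^2$ adapted to $\gamma$. After a generic linear change of coordinates, $\gamma$ becomes a finite union of graphs of definable continuous functions $y=f_k(x)$ over open intervals, together with finitely many points. Since $\gamma$ is a Jordan curve, no vertical pieces remain.
\item Decompose $\overline{\Int(\gamma)}$ into finitely many closed cells. By o-minimality, the interior is a finite union of cells: bands $\{a<x<b,\ f(x)<y<g(x)\}$ with $f<g$ definable and continuous, together with the vertical segments separating them.
\item Each closed band (which may degenerate to a curvilinear triangle where $f$ and $g$ meet) is definably homeomorphic to a standard closed triangle or square. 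The homeomorphism is given explicitly by affine interpolation $(x,y)\mapsto\bigl(x,(y-f(x))/(g(x)-f(x))\bigr)$, handling degenerate endpoints with a coning formula. This also shows that $\overline{\Int(\gamma)}$ is a definable closed $2$-disk whose boundary is $\gamma$.
\item Glue the cellwise maps along the shared vertical segments to get a definable homeomorphism $\overline{\Int(\gamma)}\to\overline{\mathbb B_1}$. This is easiest done as in the proof of the triangulation theorem: triangulate $\overline{\Int(\gamma)}$ definably, so that $\gamma$ is a union of edges, then map it simplicially onto a triangulated polygon and radially onto the disk. Do the same for the closure of $\Ext(\gamma)$, working inside a large disk $\overline{\mathbb B_R}\supset\gamma$: the annulus between $\gamma$ and $\mathbb S_R$ is triangulated definably and sent to the standard annulus $1\le|x|\le R$. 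Outside $\mathbb B_R$ take the identity.
\end{enumerate}

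The main obstacle is the step that identifies the triangulated region $\overline{\Int(\gamma)}$, resp.\ the annulus, with the standard disk, resp.\ annulus, compatibly on the boundary. Concretely, one must show that a finite triangulated $2$-complex whose underlying space is the closure of a Jordan domain is a PL disk. We handle this combinatorially using the topological Schoenflies theorem. It tells us that the polyhedron is a topological disk, and for finite $2$-complexes topological and PL disks coincide; alternatively, we induct on the number of triangles by removing a boundary triangle that meets the boundary in one or two edges.

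Once we have the PL homeomorphism to a polygon, the definable homeomorphism is the composite of the triangulation homeomorphism, which is definable, with a PL map, and hence is definable. We also need to make the disk map and the annulus map agree on $\gamma$. To arrange this, we triangulate $\overline{\mathbb B_R}$ simultaneously with $\gamma$ as a subcomplex and choose the boundary map on $\gamma$ first, then extend it inward and outward.
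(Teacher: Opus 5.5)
Your argument is correct. Its overall architecture matches the paper's: use the triangulation theorem to conjugate $\gamma$ and $h(\gamma)$ to a normal form by definable ambient homeomorphisms, extend there, and conjugate back. The normal form and the extension step differ, however.

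The paper only straightens $\gamma$ and $h(\gamma)$ to polygonal curves $P,P'$. It then appeals to ``similar arguments'' to Bing's polygonal Schoenflies theorem to extend $h'\colon P\to P'$, which is merely definable, not PL. You instead concentrate all the PL and classical input in a single straightening lemma: every definable Jordan curve is the image of $\mathbb S_1$ under a definable homeomorphism of $\mathbb R^2$. You prove it via a triangulation of $\overline{\mathbb B_R}$ compatible with $\gamma$, plus the facts that polyhedral disks and annuli are PL standard; the annulus case reduces to the disk case by cutting along a polygonal arc joining the two boundary curves. An arbitrary definable boundary map is then handled by the Alexander trick $t\theta\mapsto t f(\theta)$, which is visibly definable. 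Your route makes explicit the two points the paper leaves implicit: how a non-PL boundary map gets extended, and how a triangulation of a compact set yields homeomorphisms $H_1,H_2$ of all of $\mathbb R^2$. The paper's version is shorter, but its key step is unspecified. Since $h'$ is not PL, Bing's argument would in any case have to be combined with a coning step like yours.

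Two small repairs are needed. First, ``outside $\mathbb B_R$ take the identity'' only glues if your annulus map is the identity on $\mathbb S_R$, and the triangulation does not give this. Instead, extend outward radially by $x\mapsto \frac{\|x\|}{R}\,g\bigl(Rx/\|x\|\bigr)$, where $g$ is the induced self-map of $\mathbb S_R$. Second, to make the disk map $\theta_{in}$ and the annulus map $\theta_{out}$ agree on $\gamma$, post-compose $\theta_{out}$ with the twist $x\mapsto \|x\|\,\tau\bigl(x/\|x\|\bigr)$, where $\tau=\theta_{in}\circ\theta_{out}^{-1}|_{\mathbb S_1}$. Your Steps 1--3 are superseded by Step 4 and can be dropped; as written, they do not by themselves show that $\overline{\Int(\gamma)}$ is a disk.
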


\begin{proof}
Set $\gamma' := h(\gamma)$. Since $h$ is a definable embedding, 
$\gamma'$ is again a definable Jordan curve.

By the definable triangulation theorem, there exist polygonal Jordan curves 
$P$ and $P'$ in $\mathbb R^2$ and definable homeomorphisms
$$
H_1, H_2:\mathbb R^2 \to \mathbb R^2
$$
such that
$$
H_1(P)=\gamma
\quad \text{and} \quad
H_2(P')=\gamma'.
$$
Consider the composition
$$
h' := H_2^{-1}\circ h\circ H_1 : P \to P' .
$$
Then $h'$ is a definable homeomorphism between polygonal Jordan curves.

By similar arguments as in the proof of  polygonal Schoenflies theorem 
\cite[Theorem III.1.C]{Bing1983}, we obtain a definable homeomorphism
$$
H' : \mathbb R^2 \to \mathbb R^2
$$
such that
$$
H'|_P = h' .
$$
Therefore, 
$$
H := H_2\circ H'\circ H_1^{-1}
$$
is a desired definable extension of $h$.
\end{proof}

\section{Topological plane graphs}\label{section2}
\begin{definition}\rm 
A \emph{definable plane graph} (or simply a \emph{plane graph}) is a pair
$G=(V,E)$ embedded in $\mathbb R^2$ satisfying the following conditions:
\begin{enumerate}[label=(\alph*)]
    \item $V\subset \mathbb{R}^2$ is a finite set, whose elements are called \emph{vertices};
    \item each \emph{edge} $e\in E$ is either
    \begin{itemize}
        \item a definable arc whose endpoints are two  distinct vertices of $V$, or
        \item a definable loop with endpoint in $V$;
    \end{itemize}
    \item edges  intersect only at common endpoints, that is,
    for any two distinct edges $e,e'\in E$,
    $$
    \mathring e \cap \mathring e' = \varnothing;
    $$
    \item apart from its endpoints, an edge contains no vertices.
\end{enumerate}  
\end{definition}

\begin{remark}\rm
By a slight abuse of notation, when no confusion arises, we identify the
plane graph $G=(V,E)$ with the subset of $\mathbb R^2$ given by the union
of its vertices and edges.
A Jordan curve in $G$ is precisely a circuit in $G$. We denote by
$\mathfrak J(G)$ the set of all Jordan curves contained in $G$.
\end{remark}

The connected components of $\mathbb{R}^2 \setminus G$ are called the \emph{faces} of $G$.
There is a unique unbounded face, called the \emph{outer face}; all others are called
\emph{inner faces}. We denote  $\ak F_\infty (G)$ the outer face of $G$.

\begin{lemma}\label{lem3.1}
Let $G=(V,E)$ be a plane graph and let $\gamma\in \ak J(G)$. Let $e\in E$ be an arbitrary edge. Then
\begin{equation}\label{equ3.1}
\gamma \cap \mathring e \neq \varnothing
\quad \Longrightarrow \quad
\mathring e \subset \gamma.
\end{equation}
Consequently,  $\gamma$ is a finite union of edges of $G$.
\end{lemma}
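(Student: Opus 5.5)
The plan is an open–closed argument on the connected set $\mathring e$. Fix an edge $e$ with $\gamma\cap\mathring e\neq\varnothing$. Since $\mathring e$ is homeomorphic to an open interval, it is connected. So it suffices to show that $A:=\gamma\cap\mathring e$ is both relatively closed and relatively open in $\mathring e$. Closedness is immediate, because $\gamma$ is compact and hence closed in $\mathbb R^2$.

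For openness, take $x\in A$. By condition (d) of the definition, $x$ is not a vertex. By conditions (c) and (d), no other edge passes through $x$: distinct edges meet only at common endpoints, which are vertices. The finitely many other edges and the finitely many vertices form a compact set not containing $x$. Hence there is $r>0$ such that
$$
G\cap\mathbb B_r(x)=\mathring e\cap\mathbb B_r(x).
$$
Shrinking $r$ and using the local conic structure (Hardt triviality, as in the definition of $\deg$), we may assume this set is a single open arc $\beta$ through $x$; indeed $x$ is a topological regular point of $G$.

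Now $\gamma\subset G$, so $\gamma\cap\mathbb B_r(x)\subset\beta$. Near $x$, the Jordan curve $\gamma$ is also an open arc through $x$: since $\gamma$ is a homeomorphic image of a circle, we can choose a small open subarc $\gamma'\ni x$ of $\gamma$ contained in $\mathbb B_r(x)$. Then $\gamma'$ is a subset of the open arc $\beta\cong(0,1)$ that is homeomorphic to an open interval. By invariance of domain in dimension one, a connected subset of an interval homeomorphic to an open interval is open in it. Therefore $\gamma'$ is an open neighbourhood of $x$ in $\beta$, and hence in $\mathring e$. This gives openness of $A$, and by connectedness $A=\mathring e$, i.e. $\mathring e\subset\gamma$. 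This local step is the only one requiring care; the argument is essentially the observation that a $1$-manifold embedded inside another $1$-manifold is open in it.

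For the consequence, write
$$
\gamma=(\gamma\cap V)\cup\bigcup_{e\in E}(\gamma\cap\mathring e).
$$
By (\ref{equ3.1}), each term $\gamma\cap\mathring e$ is either empty or equal to $\mathring e$. So $\gamma$ is the union of the finitely many $\mathring e$ it meets, together with finitely many vertices. Since $\gamma$ is closed, it contains the closures $e=\overline{\mathring e}$ of these edges. It remains to see that the leftover vertices in $\gamma$ are endpoints of the chosen edges. An isolated vertex of $\gamma$ cannot occur, since $\gamma$ has no isolated points. Any vertex $v\in\gamma$ is a limit of points of $\gamma\setminus V$; these lie in finitely many chosen $\mathring e$, so $v$ lies in the closure of one of them, i.e. $v$ is an endpoint of a chosen edge. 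Hence $\gamma$ is exactly the union of the edges $e$ with $\mathring e\subset\gamma$, a finite union of edges of $G$.
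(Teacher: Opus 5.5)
Your proof is correct and follows the same route as the paper: you show that $\gamma\cap\mathring e$ is both open and closed in the connected set $\mathring e$, using conditions (c)--(d) to see that points of $\mathring e$ are topologically regular in $G$. You also supply details the paper leaves implicit, namely the one-dimensional invariance-of-domain step for openness and the check that every vertex of $\gamma$ is an endpoint of an edge contained in $\gamma$; the appeal to Hardt triviality is not actually needed, since $\gamma'\subset\mathring e\cong(0,1)$ already suffices for openness.
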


\begin{proof}
Indeed, suppose that $\gamma\cap \mathring e\neq\varnothing$ and set
$\alpha:=\gamma\cap \mathring e$.
Since $\mathring e$ contains no vertices and does not intersect the interior of any other
edge, each point of $\mathring e$ has a neighborhood in $G$ homeomorphic to an open
interval.
Hence,  for any $x\in \alpha$, a small neighborhood of $x$ in $\gamma$ coincides with
a subarc of $\mathring e$, which shows that $\alpha$ is open in $\mathring e$.

On the other hand, since $\gamma$ is a closed subset of $\mathbb{R}^2$, the set $\alpha$ is
closed in $\mathring e$.
As $\mathring e$ is connected, it follows that $\alpha=\mathring e$.

Therefore, for each edge $e\in E$, either $\gamma\cap \mathring e=\varnothing$ or
$\mathring e\subset \gamma$.
Since $\gamma\subset G$ and $G$ has finitely many edges, $\gamma$ is the union of finitely
many edges of $G$.
\end{proof}

Let $G=(V,E)$ be a plane graph. A graph $G'=(V',E')$ is called a \emph{subgraph} of $G$ if $V'\subset V$ and $E'\subset E$. If $V'=V$ and $G'$ contains no Jordan curves, then $G'$ is called a
\emph{spanning forest} of $G$.
When $G$ is connected, any connected spanning forest of $G$ is called a
\emph{spanning tree}.

A vertex $v\in V$ is called a \emph{leaf} if it is incident to exactly one edge
which is not a loop.
By convention, an isolated vertex is also called a leaf.

\begin{lemma}\label{graph-properties}
Let $G$ be a plane graph, $e$ be an edge of $G$, and $F$ be a face of $G$.
Then the following statements hold:

(i) Either $e\subset \partial F$, or $\mathring e\cap \partial F=\varnothing$, where $\partial F$ denotes the boundary of $F$.

(ii) If $e$ lies on a Jordan curve in $G$, then $e$ is contained in the boundary of exactly two faces of $G$.

(iii) If $e$ lies on no Jordan curve, then $e$ is contained in the boundary of exactly one face of $G$.

(iv) If $G'$ is a subgraph of $G$, then every face of $G$ is contained in a face of $G'$.
In particular, the outer face of $G$ is contained in the outer face of $G'$.

(v) Let $G$ and $G'$ be disjoint plane graphs, and let $e$ be an arc joining $G$ to $G'$ such that
$\mathring e\subset \bb R^2\setminus(G\cup G')$.
Then
$$
\ak  J(G\cup G')=\ak  J(G\cup G'\cup e).
$$

(vi) If $G$ is connected and a spanning tree of $G$ consists of a single vertex $p$, then $G$  is either the point $p$ or a union of loops based at $p$.

(vii) Let $\gamma_1,\gamma_2\in \ak  J(G)$ with $\gamma_1<\gamma_2$, and assume that there exists an edge
$e\subset \gamma_1\cap \gamma_2$.
Let $F$ be the face of $G$ such that $e\subset \partial F$ and $F\subset \Int(\gamma_2)$.
Then $F\subset \Int(\gamma_1)$.

(viii) If $F$ is an inner face of $G$, then there exists a unique $\gamma\in \ak  J(\partial F)$ such that
$F\subset \Int(\gamma)$. In particular, $\gamma= \partial F \cap \partial(\ak F_\infty(\partial F))$.

(ix) For any two distinct faces of $G$, there exists a Jordan curve $\gamma\in \ak  J(G)$ separating them:
one face lies in $\Int(\gamma)$, while the other lies in $\Ext(\gamma)$.
Moreover, $\gamma$ may be chosen to lie in the boundary of the face contained in $\Int(\gamma)$.

(x) A point of $G$ is not on the boundary of the outer face of $G$ if and only if it lies in the interior of some Jordan curve in $G$.

\end{lemma}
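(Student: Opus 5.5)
The lemma is a list of ten elementary facts about finite plane graphs. I will prove them roughly in order, using three tools: the local structure of $G$ near interior points of edges and near vertices, the Jordan curve theorem, and Lemma~\ref{lem3.1}.

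\textbf{Items (i)--(iii).} For (i), fix a point $x\in\mathring e$. A small disk around $x$ meets $G$ only in a subarc of $\mathring e$, so it is split into two half-disks, each lying in some face. Consider the set of points of $\mathring e$ at which $F$ meets a small disk. This set is open in $\mathring e$, and it is closed because $\partial F$ is closed. Since $\mathring e$ is connected, either $\mathring e\subset\partial F$ (and then $e\subset\partial F$ by closedness) or $\mathring e\cap\partial F=\varnothing$.

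For (ii), suppose $e\subset\gamma$. The two local sides of $e$ lie in $\Int(\gamma)$ and $\Ext(\gamma)$ respectively, so they belong to different faces. Hence exactly two faces contain $e$ in their boundary, and there cannot be more than two because there are only two local sides.

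For (iii), suppose the two sides lay in different faces. Join them by a small segment crossing $e$ once. Any path between the two sides avoiding $G$ would, together with this segment, give a closed curve meeting $e$ exactly once transversally. A parity argument then shows that $e$ lies on a cycle. Equivalently: if $e$ is not on any circuit, then $G\setminus\mathring e$ separates the endpoints of $e$, and one can walk around the component containing one endpoint to join the two sides.

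\textbf{Items (iv)--(vi).} For (iv), every face of $G$ is connected and disjoint from $G'\subset G$, so it lies in one face of $G'$. The unbounded face of $G$ must lie in the unbounded face of $G'$.

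For (v), a Jordan curve through $e$ would need to leave $e$ at both endpoints and return. Since $G$ and $G'$ are disjoint and $e$ is the only bridge between them, such a curve would have to pass through $e$ twice, which contradicts injectivity.

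For (vi), a spanning tree contains every vertex. If it is the single point $p$, then $V=\{p\}$, and every edge is then a loop at $p$.

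\textbf{Items (vii)--(x).} For (vii), near an interior point of $e$ the face $F$ is the local side of $e$ lying in $\Int(\gamma_2)$. Since $\gamma_1<\gamma_2$, Lemma~\ref{InOut}(i) shows that the side of $e$ facing into $\Int(\gamma_1)$ is the same side. So $F$ meets $\Int(\gamma_1)$. As $F$ is connected and disjoint from $\gamma_1$, we get $F\subset\Int(\gamma_1)$.

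For (viii), set $H=\partial F$, a subgraph of $G$; since $F$ is bounded, $\partial F$ is nonempty. Let $\gamma=\partial F\cap\partial\mathfrak F_\infty(H)$. The plan is to show that $F$ is itself a face of $H$ and is inner, and then that the boundary of the outer face of $H$ restricted to the edges adjacent to $F$ forms a single Jordan curve. For the latter, every edge of $\partial F$ bounding $F$ on one side and the outer face on the other lies on a circuit by (ii)/(iii). One then traces the boundary walk of $F$ and shows that it visits no vertex twice. If it visited some vertex twice, $F$ would be split into two faces, contradicting connectedness. Uniqueness follows from (vii) together with minimality: a second Jordan curve $\gamma'$ enclosing $F$ would satisfy $\gamma'\not<\gamma$ and $\gamma\not<\gamma'$, which contradicts the fact that both are contained in $\partial F$.

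For (ix), given faces $F_1\ne F_2$ with, say, $F_1$ inner, take $\gamma$ from (viii) for $F_1$. If $F_2\subset\Ext(\gamma)$ we are done. Otherwise $F_2\subset\Int(\gamma)$, in which case $F_2$ is inner and I use its own curve $\gamma_2$ from (viii). I would then argue that $F_1\not\subset\Int(\gamma_2)$, since otherwise (vii)-type nesting forces $F_1=F_2$.

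For (x): if $x\in\Int(\gamma)$ for some $\gamma\subset G$, then $x$ has a neighbourhood inside $\Int(\gamma)$, which misses the outer face, so $x\notin\partial\mathfrak F_\infty(G)$. Conversely, if $x$ is not on the outer boundary, pick a face $F$ near $x$ and apply (viii) and (ix).

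\textbf{Main obstacle.} I expect (viii) to be the hardest step: showing that the boundary of a face contains a Jordan curve enclosing it, and that this curve is unique. My first approach would be induction on the number of edges of $\partial F$. Delete a bridge or a pendant edge (using (iii) and (v)), or a loop, and check that $F$ survives as a face; then continue until $\partial F$ is 2-edge-connected around $F$, at which point the outer boundary is a cycle.
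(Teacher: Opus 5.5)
Items (i), (ii), (iv)--(vii) and (ix) are fine and essentially match the paper (which cites Diestel for (i)--(iv)). The real problems are in (viii) and in the converse of (x), with a smaller one in (iii).

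\textbf{Item (viii), existence.} You rest existence on the claim that the edges separating $F$ from the outer face $O$ of $\partial F$ can be traced as a walk that ``visits no vertex twice, since otherwise $F$ would be split''. This is only asserted. You never show that these edges form a single closed walk, and the splitting claim would need a local sector analysis at the repeated vertex plus a Jordan curve through $O$.

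If ``the boundary walk of $F$'' means the facial walk, the claim is false. A pendant edge inside $F$, or an inner cycle of $\partial F$ touching the outer one at a vertex, makes the walk revisit a vertex without disconnecting $F$. Your fallback does not close the gap either, since $2$-edge-connectivity alone does not make face boundaries cycles (consider a figure-eight).

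\textbf{What you are missing.} You already have everything you need once you say that an edge between $F$ and $O$ lies on a circuit by (ii)/(iii). Take one such edge $e$ and a circuit $\gamma\subset\partial F$ through it.
\begin{itemize}
\item By (iv) applied to the subgraph $\gamma$, we get $O\subset\Ext(\gamma)$.
\item The two local sides of $e$ lie on opposite sides of $\gamma$, and one of them is in $O$.
\item Hence the $F$-side is in $\Int(\gamma)$, and connectedness of $F$ gives $F\subset\Int(\gamma)$.
\end{itemize}
This is the paper's argument; no walk analysis is needed. The identity $\gamma=\partial F\cap\partial O$ then follows from uniqueness.

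\textbf{Item (viii), uniqueness.} Your reason is backwards: nesting is what you get, not what you exclude. Suppose $\sigma\in\mathfrak J(\partial F)$ also satisfies $F\subset\Int(\sigma)$. Then $\gamma\cup\sigma\subset\partial F\subset\overline{\Int(\gamma)\cap\Int(\sigma)}$. If $\gamma\neq\sigma$, Lemma~\ref{BS} gives both $\gamma<\sigma$ and $\sigma<\gamma$, which is impossible. Neither (vii) nor any minimality plays a role.

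\textbf{Converse of (x).} ``Pick a face near $x$ and apply (viii) and (ix)'' does not work as stated. The facial curve $\gamma_F$ of an inner face $F$ adjacent to $x$ may pass through $x$ (for example, when $x$ lies on the cycle bounding $F$), so it need not put $x$ in its interior. Item (ix) as stated does not guarantee a curve avoiding $x$ either.

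The paper instead removes $x$ together with the interiors of its incident edges. None of these meet $\partial\mathfrak F_\infty(G)$, so the outer face is unchanged. Hence $x$ lies in an inner face $F'$ of the smaller graph, and (viii) applied to $F'$ gives a curve in $G$ whose interior contains $x$.

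\textbf{Item (iii).} Your first argument is incoherent: if the two sides lie in different faces, there is no path between them avoiding $G$. The bridge idea (``walk around the component containing one endpoint'') is the right one. To be rigorous it needs something like Janiszewski's theorem, or the proof in Diestel.
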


\begin{proof}
Items (i)--(iii) follow from~\cite[Lemma~4.2.2]{Diestel2018}, and  (iv) from~\cite[Lemma~4.2.1]{Diestel2018}.

\medskip\noindent
(v)
The inclusion $\ak  J(G\cup G')\subset \ak  J(G\cup G'\cup e)$ is immediate.
For the reverse inclusion, let $\gamma\in \ak  J(G\cup G'\cup e)$.
If $e\subset \gamma$, then $\gamma\setminus \mathring e\subset G\cup G'$ is a path joining an endpoint in $G$ to an endpoint in $G'$.
This contradicts the assumption that $G$ and $G'$ are disjoint.
Hence $e\not\subset \gamma$, so $\gamma\subset G\cup G'$ and $\gamma\in \ak  J(G\cup G')$.

\medskip\noindent
(vi) If $G$ has no edges, then $G =  \{p\}$. We may assume that $G$ has edges. 
Since a spanning tree of $G$ has only one vertex $p$,  every edge of $G$ has both endpoints equal to $p$.
Thus, every edge is a loop based at $p$, and $G$ is a union of such loops.

\medskip\noindent
(vii)
Pick a point $x\in \mathring e$ and a disk $D$ centered at $x$ such that
$D\cap \gamma_1 = D\cap \gamma_2$ is an open arc contained in $\mathring e$.
Then $D\setminus \gamma_1=D\setminus \gamma_2$ and this set has exactly two connected components, one contained in
$\Int(\gamma_i)$ and the other contained in $\Ext(\gamma_i)$ for $i=1,2$.
We have 
$$D \cap \Int(\gamma_1) \subset D \cap \Int(\gamma_2)\subset F,$$
where the first inclusion follows from the assumption $\gamma_1<\gamma_2$; while the second one is a consequence of the assumptions $e\subset \partial F$ and $F\subset \Int(\gamma_2)$.
Thus $F\cap\Int(\gamma_1)\ne\varnothing.$
On the other hand, since $F$ is connected and disjoint from $\gamma_1$, it follows that $F \subset \Int(\gamma_1)$.

\medskip\noindent

(viii) 
Note that $\partial F$ is also a graph with $F$ as an inner face.
Let $O=\mathfrak F_\infty(\partial F)$--the outer face of $\partial F$,
let $e$ be a common edge of $\partial F$ and $\partial O$, let
$p\in \mathring e$, and let $D$ be a small disk centered at $p$ such that
$D\cap \mathring e$ is an open arc. Then $D\setminus e$ has exactly two
connected components, say $D_1$ and $D_2$, one of which is contained in
$F$ and the other in $O$. Without loss of generality, we may assume that
$D_1\subset F$ and $D_2\subset O$.
By (iii), there exists a Jordan curve
$\gamma\in \mathfrak J(\partial F)$ containing $e$. Since
$D\setminus e=D\setminus \gamma$, the sets $D_1$ and $D_2$ lie on
different sides of $\gamma$. We may regard $\gamma$ as a subgraph of
$\partial F$, so by (iv), $O\subset \Ext(\gamma)$. Since
$D_2\subset O$, it follows that $D_2\subset \Ext(\gamma)$. Hence
$D_1\subset \Int(\gamma)$, and therefore $F\subset \Int(\gamma)$.
We now show that $\gamma$ is unique.

Indeed, if $\sigma \in \frak J(\partial F)$ is such that $F \subset \Int(\sigma)$, then we have $F \subset \Int(\gamma) \cap \Int(\sigma)$. It follows that $\partial F \subset \overline{\Int(\gamma) \cap \Int(\sigma)}$, which implies $\gamma \cup \sigma \subset \overline{\Int(\gamma) \cap \Int(\sigma)}$. In particular, this yields
$$\gamma \subset \overline{\Int(\sigma)} \quad \text{and} \quad \sigma \subset \overline{\Int(\gamma)}.$$
Therefore, $\sigma = \gamma$.
By the construction, every common edge of $\partial F$ and $\partial O$
is contained in $\gamma$. Hence $\gamma = \partial O \cap \partial F$.

\medskip\noindent
(ix)
If one of the faces is the outer face $\mathfrak F_\infty(G)$, let $F$ be the other (inner) face.
By (viii), there exists $\gamma\in \ak  J(\partial F)\subset \ak  J(G)$ with $F\subset \Int(\gamma)$, so $\mathfrak F_\infty(G)$ lies in the unbounded component of $\bb R^2\setminus \gamma$ and $\gamma$ separates the two faces.

Now assume that both $F_1$ and $F_2$ are inner faces.
Pick $\gamma_i\in \mathfrak J(\partial F_i)$ such that
$F_i\subset \Int(\gamma_i)$ for $i=1,2$ (in view of Item~(viii)). If
$\Int(\gamma_1)\cap F_2=\varnothing$, then $\gamma:=\gamma_1$
separates $F_1$ and $F_2$.
Otherwise, $F_2\subset \Int(\gamma_1)$. Hence
$\gamma_2\subset \partial F_2\subset \overline{\Int(\gamma_1)}$, and
therefore
$\Int(\gamma_2)\subset \Int(\gamma_1)$.
If $\gamma_1=\gamma_2$, then both $F_1$ and $F_2$ are inner faces bounded
by the same Jordan curve. Let $e\subset \gamma_1$ be an edge of $G$. Then
clearly $e\subset \partial F_1$ and $e\subset \partial F_2$. On the other
hand, since $\gamma_1$ is a Jordan curve, $e$ lies on the boundary of
exactly two faces: one contained in $\Int(\gamma_1)$ and the other in
$\Ext(\gamma_1)$. Since both $F_1$ and $F_2$ are contained in
$\Int(\gamma_1)$, we must have $F_1=F_2$, a contradiction. Therefore,
$\gamma_2<\gamma_1$.
Choose an edge $e\subset \gamma_1$ such that
$\mathring e\subset \Ext(\gamma_2)$. Since $e\subset \partial F_1$, the
face $F_1$ lies in $\Ext(\gamma_2)$, while
$F_2\subset \Int(\gamma_2)$. Hence $\gamma:=\gamma_2$ separates $F_1$
and $F_2$, and $\gamma\subset \partial F_2$.

\medskip\noindent
(x) ($\Rightarrow$)
Assume that $p\in G\setminus \partial \mathfrak F_\infty(G)$.
Let $e_1,\dots,e_n$ be the edges of $G$ containing $p$ (set $n=0$ if no
edge contains $p$).
By item~(i), we have
$$
\mathring e_i\cap \partial \mathfrak F_\infty(G)=\varnothing
\qquad \text{for all } i=1,\dots,n.
$$
Remove the set
$$
\{p\}\cup \bigcup_{i=1}^n \mathring e_i
$$
from $G$, and denote the resulting graph by $G'$.
Then $G'$ has the same outer face as $G$, and the point $p$ lies in an
inner face of $G'$, say $F'$.
By item~(viii), there exists
$\gamma\in \mathfrak J(G')\subset \mathfrak J(G)$ such that
$F'\subset \Int(\gamma)$.
In particular, $p\in \Int(\gamma)$.

($\Leftarrow$)
Conversely, if $q\in \Int(\gamma)$ for some $\gamma\in \ak  J(G)$, then every face whose boundary contains $q$ must be contained in $\Int(\gamma)$.
In particular, $q$ cannot lie on the boundary of the outer face $\mathfrak F_\infty(G)$.
\end{proof}

Let us denote by $\Int(G)$ the union of all inner faces of $G$ and set
$$
B(G) := G \cup \Int(G).
$$

\begin{remark}\rm 
Observe that $B(G)$ is the complement of the outer face of $G$. Since the outer face is open, it follows that $B(G)$ is a closed subset of
$\mathbb R^2$.
\end{remark}

\begin{lemma}\label{lem_BG-boundary}
We have $\partial B(G) \subset G$.
\end{lemma}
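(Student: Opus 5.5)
Since $B(G)$ is closed, we have $\partial B(G)=B(G)\setminus \operatorname{int}(B(G))$, where $\operatorname{int}$ denotes the topological interior. So it suffices to show that every point of $B(G)$ not lying in $G$ is an interior point of $B(G)$. By definition, $B(G)\setminus G=\Int(G)$ is the union of the inner faces of $G$. We will therefore show that each inner face is an open subset of $\mathbb R^2$ contained in $B(G)$.

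The key steps are as follows. First, $G$ is a finite union of points and compact arcs, so $G$ is closed in $\mathbb R^2$ and $\mathbb R^2\setminus G$ is open. Second, $\mathbb R^2$ is locally connected, so the connected components of the open set $\mathbb R^2\setminus G$ are open. Hence every face $F$ of $G$, and in particular every inner face, is open. Third, let $x\in B(G)\setminus G$. Then $x$ lies in some inner face $F$, and $F$ is an open neighbourhood of $x$ with $F\subset \Int(G)\subset B(G)$. Thus $x$ is an interior point of $B(G)$, so $x\notin \partial B(G)$. Since $B(G)$ is closed, $\partial B(G)\subset B(G)$, and combining this with the previous step gives $\partial B(G)\subset G$.

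Alternatively, we could argue through the outer face. Because $\mathbb R^2\setminus B(G)=\mathfrak F_\infty(G)$, we have $\partial B(G)=\partial \mathfrak F_\infty(G)$. A component $F$ of $\mathbb R^2\setminus G$ satisfies $\partial F\subset G$: a boundary point of $F$ lying outside $G$ would belong to some open face $F'$ that meets $F$, which forces $F'=F$ and contradicts the fact that $F$ is open.

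There is no real obstacle in this argument. The only step that needs care is the openness of faces, which rests on the local connectedness of the plane. Given that fact, the rest is a direct consequence of the definitions together with the preceding remark that $B(G)$ is closed.
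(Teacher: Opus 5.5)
Your proof is correct and follows the same route as the paper: since $B(G)$ is closed, any boundary point lies in $G$ or in $\Int(G)$, and points of $\Int(G)$ are interior points of $B(G)$ because inner faces are open. You also justify the openness of faces via the local connectedness of $\mathbb R^2$, which the paper leaves implicit; your alternative argument via $\partial\mathfrak F_\infty(G)$ is valid too but not needed.
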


\begin{proof}

By definition $B(G) = G \cup \Int(G)$ which is closed subset of $\bb R^2$.
Thus any point of $\partial B(G)$ belongs either to $G$ or to
$\Int(G)$.
However, every point of $\Int(G)$ is a topological interior point of $B(G)$,
and hence cannot lie in $\partial B(G)$.
Therefore $\partial B(G) \subset G$.
\end{proof}

By Lemma \ref{lem_BG-boundary} we have $\partial B(G) \subset G$.  We now endow $\partial B(G)$ with a plane graph structure as follows.
First we specify the vertices:
\begin{itemize}
    \item Every topological singular point of $\partial B(G)$ (i.e.\ a point at which
    $\partial B(G)$ is not locally homeomorphic to an open interval) is defined to be a vertex of
    $\partial B(G)$.
    \item If a connected component of $\partial B(G)$ is a Jordan curve, we choose a vertex
    of $G$ lying on this component and define it to be a vertex of $\partial B(G)$.
\end{itemize}

Next we specify the edges:
\begin{itemize}
    \item An arc in $\partial B(G)$ whose endpoints are vertices of $\partial B(G)$ is
    defined to be an edge of $\partial B(G)$ if its relative interior contains no other vertices
    of $\partial B(G)$.
    \item A Jordan curve contained in $\partial B(G)$ is defined to be an edge of
    $\partial B(G)$ if it contains exactly one vertex of $\partial B(G)$; in this case
    it is a loop.
\end{itemize}
In this way, $\partial B(G)$ becomes a plane graph in the sense defined above.
Note, however, that with this graph structure, $\partial B(G)$ is not, in general, a subgraph of $G$.

\begin{figure}[htbp]
\centering
\begin{tikzpicture}[scale=0.8, every node/.style={circle, fill=black, inner sep=1.5pt}]

    \begin{scope}[shift={(0,0)}]
        \node (p1) at (0,3) {};
        \node (p2) at (2,3.5) {};
        \node (p3) at (1.2,-1.5) {};
        \node (p4) at (-0.8,-2) {};
        
        \draw [smooth cycle, tension=1] plot coordinates {(p1) (2,3.5) (2.5,1) (p3) (p4) (-2,0)};
        
        \node (tl1) at (-1.5,4) {};
        \node (tl2) at (-2.5,4.5) {};
        \node (tl_pendant) at (-2,4.8) {};
        
        \draw (p1) to [out=135, in=0] (tl1);
        \draw [smooth cycle] plot coordinates {(tl1) (-2,3.5) (tl2) (-1.5,5.3)};
        \draw (tl2) to [out=45, in=270] (tl_pendant);
        
        \node (pendant1) at (0.5,1.5) {};
        \draw (2,3.5) to [out=220, in=0] (pendant1);
        
        \node (loop_node1) at (0,-0.5) {};
        \node (loop_node2) at (-0.5,-0.5) {};
        \draw (loop_node1) -- (loop_node2);
        
        \node (pendant2) at (-1.5,-0.5) {};
        \draw (loop_node2) -- (pendant2);
        \draw (loop_node2) to [out=270, in=90] (p4);
        
        \node (pendant3) at (0.5,-3.5) {};
        \draw (p3) to [out=270, in=90] (pendant3);
        
        \node (br1) at (3.5,-1.8) {};
        \node (br2) at (3.2,-3.5) {};
        \node (br_pendant1) at (3.3,-2.3) {};
        \node (br_pendant2) at (2.8,-3.0) {};
        
        \draw [smooth cycle] plot coordinates {(br1) (2.5,-2.9) (br2) (3.8,-3)};
        \draw (br1) to [out=270, in=90] (br_pendant1);
        \draw (br2) to [out=120, in=0] (br_pendant2);
        
        \node[fill=none] at (0.5,-4.5) {\large $G$};
    \end{scope}

    \begin{scope}[xshift=7cm]
        \node (bp1) at (0,3) {};
        \node (bp3) at (1.2,-1.5) {};
        
        \draw [smooth cycle, tension=1] plot coordinates {(bp1) (2,3.5) (2.5,1) (bp3) (-0.8,-2) (-2,0)};
        
        \node (btl1) at (-1.5,4) {};
        \draw (bp1) to [out=135, in=0] (btl1);
        \draw [smooth cycle] plot coordinates {(btl1) (-2,3.5) (-2.5,4.5) (-1.5,5.3)};
        
        \node (bpendant3) at (0.5,-3.5) {};
        \draw (bp3) to [out=270, in=90] (bpendant3);
        
        \node (bbr2) at (3.2,-3.5) {};
        \draw [smooth cycle] plot coordinates {(3.5,-1.8) (2.5,-2.9) (bbr2) (3.8,-3)};
        
        \node[fill=none] at (0.5,-4.5) {\large $\partial B(G)$};
    \end{scope}

\end{tikzpicture}
\caption{$G$ and $\partial B(G)$}
\label{Fig1}
\end{figure}

\begin{remark}\label{rem:deg2-loop}\rm
A vertex of $\partial B(G)$ has the degree $2$ if and only if the connected component of
$\partial B(G)$ containing this vertex is a loop.
\end{remark}

\begin{proposition}\label{prop:boundary-properties}
Let $G$ be a plane graph and consider $\partial B(G)$ endowed with the plane graph
structure defined above. Then the following hold:

(i) $V(\partial B(G)) \subset V(G)$.
    
(ii) For every Jordan curve $\gamma \in \ak J(\partial B(G))$, we have
    $$
    \Int(\gamma)\cap \partial B(G)=\varnothing.
    $$
    
(iii) Assume that $\partial B(G)$ is connected and contains at least one edge.
    Then every leaf of any spanning tree of $\partial B(G)$ is either a leaf of
    $\partial B(G)$ or the endpoint of a loop of $\partial B(G)$.
\end{proposition}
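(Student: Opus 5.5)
We prove the three items separately; each reduces to the structure of $B(G)$ as the complement of the outer face $\mathfrak F_\infty(G)$, together with $\partial B(G)=\partial\mathfrak F_\infty(G)\subset G$ (Lemma~\ref{lem_BG-boundary}).

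For (i), take a vertex $v$ of $\partial B(G)$. If $v$ is a vertex chosen on a Jordan-curve component, it is a vertex of $G$ by construction. Otherwise $v$ is a topological singular point of $\partial B(G)$. If $v$ were not a vertex of $G$, then $v\in\mathring e$ for some edge $e$ of $G$. By Lemma~\ref{graph-properties}(i), applied to the face $\mathfrak F_\infty(G)$, we would have $e\subset\partial\mathfrak F_\infty(G)=\partial B(G)$. Near $v$, the set $G$ coincides with $\mathring e$, so $\partial B(G)$ is locally an open arc at $v$. This contradicts singularity.

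For (ii), let $\gamma\in\mathfrak J(\partial B(G))$. Every point $x\in\partial B(G)$ lies in $\partial\mathfrak F_\infty(G)$. Hence $x$ is a limit of points of the outer face, and $\mathfrak F_\infty(G)\subset\Ext(\gamma)$ because $\mathfrak F_\infty(G)$ is connected, unbounded and disjoint from $\gamma\subset G$. So $x\in\overline{\Ext(\gamma)}$. Since $\Int(\gamma)$ is open and disjoint from $\overline{\Ext(\gamma)}$, we get $x\notin\Int(\gamma)$. This is essentially Lemma~\ref{graph-properties}(x) in its "$\Leftarrow$" direction, applied to $G$ with $\gamma\in\mathfrak J(G)$ (note $\gamma\subset\partial B(G)\subset G$).

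For (iii), let $T$ be a spanning tree of $\partial B(G)$ and $v$ a leaf of $T$. If $T$ is the single vertex $v$, Lemma~\ref{graph-properties}(vi) shows that $\partial B(G)$ is a union of loops at $v$ (it has an edge), so $v$ is an endpoint of a loop. Otherwise $v$ is incident in $T$ to exactly one non-loop edge $e_0$. Suppose $v$ is neither a leaf of $\partial B(G)$ nor on a loop. Then $v$ meets some non-loop edge $e_1\neq e_0$ of $\partial B(G)$ that is not in $T$. Adding $e_1$ to $T$ produces a unique cycle $\gamma$, a Jordan curve through $v$ containing both $e_0$ and $e_1$.

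The main obstacle is to derive a contradiction from this cycle using (ii) and the local structure at $v$. Being a tree leaf by itself gives no contradiction, since any graph with a cycle has spanning trees whose leaves lie on that cycle. So the planarity data must be used. The plan is to argue that every point of $\gamma$ lies in $\partial\mathfrak F_\infty(G)$, with the outer face on the exterior side of $\gamma$. Then we would show that the tree structure forces an edge of $\partial B(G)$ at $v$ into $\Int(\gamma)$, or forces $v$ to be a point where $\partial B(G)$ is locally an arc that is not a vertex. Either outcome contradicts (ii) or the vertex convention (Remark~\ref{rem:deg2-loop}).

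If this route fails, an alternative is to show that each leaf of $T$ has degree $1$ or $2$ in $\partial B(G)$ after discarding loops. Degree $2$ at a vertex not on a loop is impossible by Remark~\ref{rem:deg2-loop}, since such a vertex would not have been chosen as a vertex. I expect this degree count, and verifying that it really yields the claim, to be the delicate step.
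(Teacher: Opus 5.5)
Your arguments for (i) and (ii) are correct and follow the paper. For (i), your use of Lemma~\ref{graph-properties}(i) on the outer face is more careful than the paper's one-line assertion that a singular point of $\partial B(G)$ is singular in $G$. The gap is in (iii), which you leave as a plan, and the plan as stated does not close.

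\emph{Why the single-cycle route fails.} With one non-tree edge $e_1$ at $v$, the fundamental cycle $\gamma$ of $T+e_1$ gives no contradiction with (ii). Nothing forces a branch of $\partial B(G)$ at $v$ into $\Int(\gamma)$. Concretely, take a second non-tree edge $e_2$ at $v$, and let $P_0,R_1,R_2$ be the three paths described below. If $P_0$ is the ``middle'' path, then $R_2$ lies outside $P_0\cup R_1$ and $R_1$ lies outside $P_0\cup R_2$. So neither fundamental cycle has any point of $\partial B(G)$ in its interior. Your alternative, showing leaves of $T$ have degree $1$ or $2$, only restates the claim and gives no way to exclude degree $\ge 3$.

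\emph{The missing idea.} Use two non-tree edges and choose the Jordan curve adaptively.
\begin{itemize}
\item[$\bullet$] Since $\partial B(G)$ is connected with an edge, $v$ is not isolated. Topological degree $1$ would make $v$ a leaf. Degree $2$ is excluded by Remark~\ref{rem:deg2-loop}, as $v$ is on no loop. Hence $\deg(v,\partial B(G))\ge 3$.
\item[$\bullet$] Each edge at $v$ contributes one branch, and $T$ has exactly one edge $e_0$ at $v$. So there are two distinct non-tree edges $e_1,e_2$ at $v$, with other endpoints $u_1,u_2\neq v$.
\item[$\bullet$] The tree paths $P_i$ from $v$ to $u_i$ both begin with $e_0$. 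They share an initial segment $P_0$ from $v$ to some vertex $w\neq v$, and are vertex-disjoint afterwards. Write $P_i=P_0\cup Q_i$.
\item[$\bullet$] Then $P_0$, $R_1=e_1\cup Q_1$ and $R_2=e_2\cup Q_2$ are three internally disjoint paths from $v$ to $w$. They form a theta subgraph of $\partial B(G)$.
\item[$\bullet$] The boundary of the unbounded face of this theta graph is a Jordan curve $\gamma$ made of two of the paths. The interior of the third path lies in $\Int(\gamma)$, contradicting (ii).
\end{itemize}
Note that $\gamma$ may be $R_1\cup R_2$, which contains two non-tree edges and is not a fundamental cycle. This is exactly what your approach misses.
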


\begin{proof}
(i) By Lemma \ref{lem_BG-boundary}, we have $\partial B(G)\subset G$.
Let $p\in V(\partial B(G))$.
By construction, either $p$ is a topological singular point of $\partial B(G)$,
or $p$ is a vertex of $G$ chosen on a connected component of $\partial B(G)$
which is a Jordan curve.

In the first case, $p$ is also a topological singular point of $G$.
Hence $p\in V(G)$.
In the second case, $p\in V(G)$ by definition.
Therefore $V(\partial B(G))\subset V(G)$.

(ii) Let $\gamma\in\ak J(\partial B(G))$.
Since $\gamma\subset\partial B(G)\subset G$, the interior $\Int(\gamma)$
is disjoint from the outer face of $G$, and hence
$\Int(\gamma)\subset B(G)$. Every point of $\Int(\gamma)$ is a topological interior point of $B(G)$, and
in particular does not belong to $\partial B(G)$.
Hence
$$
\Int(\gamma)\cap\partial B(G)=\varnothing.
$$

(iii)
Assume that $\partial B(G)$ is connected and contains at least one edge, and let $T$ be a spanning tree of
$\partial B(G)$. Let $p$ be a leaf of $T$.

If $p$ is a leaf of $\partial B(G)$, then there is nothing to prove.
Assume that $p$ is not a leaf of $\partial B(G)$ and that $p$ is not the endpoint of a loop of $\partial B(G)$.
Then $\deg(p, \partial B(G))\ge 3$ (indeed, $\deg(p, \partial B(G))=2$ would imply, by
Remark~\ref{rem:deg2-loop}, that the connected component of $\partial B(G)$ containing $p$ is a loop).

Let $e$ be the unique edge of $T$ incident to $p$.
Since $\deg(p, \partial B(G))\geq 3$ and $p$ is not the endpoint of a loop of $\partial B(G)$, there exist two distinct edges
$e_1,e_2\in E(\partial B(G))\setminus E(T)$ incident to $p$.
Let $u_1$ and $u_2$ be the other endpoints of $e_1$ and $e_2$, respectively.

For $i=1,2$, let $P_i$ be the unique path in $T$ joining $p$ and $u_i$.
Let $w$ be the first vertex (starting from $p$) at which the paths $P_1$ and $P_2$ diverge; equivalently,
$P_1$ and $P_2$ share an initial subpath $P_0$ from $p$ to $w$, and after $w$ they are edge-disjoint.
Write $P_i=P_0\cup Q_i$, where $Q_i$ is the subpath from $w$ to $u_i$.

Now consider the following three paths joining $p$ and $w$:
$$
P_0,\qquad
R_1:=e_1\cup Q_1,\qquad
R_2:=e_2\cup Q_2.
$$
These paths are internally disjoint (they meet only at the endpoints $p$ and $w$), and their union
$$
H:=P_0\cup R_1\cup R_2
$$
is a subgraph of $\partial B(G)$.

Let $U$ be the unbounded component of $\bb R^2\setminus H$, and let $\gamma:=\partial U$.
Then $\gamma$ is a Jordan curve contained in $H\subset \partial B(G)$.
Moreover, $\gamma$ is the union of exactly two of the three paths joining $p$ and $w$, so the remaining third path lies in
$\Int(\gamma)$. In particular, $\Int(\gamma)\cap \partial B(G)\neq\varnothing$.
This contradicts~(ii), which asserts that $\Int(\gamma)\cap \partial B(G)=\varnothing$ for every
$\gamma\in \ak J(\partial B(G))$.

Therefore $p$ must be either a leaf of $\partial B(G)$ or the endpoint of a loop of $\partial B(G)$.
\end{proof}
\begin{definition}\label{def_definable-plane-graph}\rm 
Let $X \subset \mathbb{R}^2$ be a compact definable set of dimension~$1$. 
We can (and will) regard $X$ as a plane graph by choosing a finite set of
vertices on each connected component of $X$ as follows:
\begin{itemize}
    \item[(i)] Every topological singular point of $X$ is taken to be a vertex.
    \item[(ii)] Every connected component of $X$ of dimension $0$ is a vertex.
    \item[(iii)] For each connected component of $X$ which is a Jordan curve,
    we choose one point on this component and declare it to be a vertex
    (so this component consists of a single vertex and one loop edge).
\end{itemize}
Once the set of vertices is fixed, the edges of the associated plane graph
are the closures of the connected components of $X \setminus V(X)$.
Note that a non-smooth point of $X$ need not be a vertex if it is not a
topological singular point.
\end{definition}

With these remarks, for the remainder of this part, let
$X \subset \mathbb{R}^2$ be a connected  compact definable set of dimension $1$,
considered as a plane graph.

\begin{definition}\rm  
    A Jordan curve $\gamma \subset X$ is called \emph{special} if the following
conditions are satisfied:
\begin{itemize}
    \item $\gamma$ is a loop of $X \setminus \Int(\gamma)$ (the plane graph structure of $X\setminus \Int(\gamma)$ is defined as Definition \ref{def_definable-plane-graph};
    
    \item there exists a unique point $p \in \gamma$ such that there is an edge of
    $X$ containing $p$ and not contained in $\Int(\gamma)$;
    
    \item there exists $\beta \in \ak J(X)$ such that $\gamma < \beta$
    (equivalently, $\gamma$ is not contained in the boundary of the outer face of $X$).
\end{itemize}
\end{definition}

\begin{example}{\rm
In Figure~\ref{Fig2} below, there are two special Jordan curves,
namely the two red Jordan curves $\gamma_1$ and $\gamma_2$.
\begin{figure}[htbp]
\centering
\begin{tikzpicture}[
    thick, 
    dot/.style={circle, fill=black, inner sep=1.5pt},
    label distance=-1mm
]

    \draw (0,0) ellipse (4cm and 2.5cm);
    
    \node[dot] (top_outer) at (0, 2.5) {};
    \node[dot] (bot_outer) at (1.5, -2.32) {};
    \node[dot] (bridge_left) at (4, 0) {};

    \draw[red] (-1.5,0.2) ellipse (1.2cm and 0.7cm);
    \node[red, below] at (-1.9, -0.5) {$\gamma_1$};
    
    \draw (-1.5,0.2) ellipse (0.5cm and 0.25cm);
    \node[red, above] at (-1.5, -0.5) {};
    
    \node[dot] (top_red) at (-1.5, 0.9) {};
    \node[dot] (tr_red) at (-0.8, 0.6) {};
    \node[dot] (inner_dot) at (-1.2, 0.4) {};
    \node[dot] (left_red) at (-2.65, 0.4) {};
    \node[dot] (pendant_left) at (-2, 0.2) {};
    
    \draw (top_outer) to [out=270, in=90] (top_red);
    
    \draw (tr_red) to [out=240, in=60] (inner_dot);
    
    \draw (left_red) -- (pendant_left);

    \draw[red] (1.5, -0.2) circle (0.6cm) ;
     \node[red, below] at (1.8, -0.8) {$\gamma_2$};
    
   
    \node[dot] (circle_bot) at (1.5, -0.8) {};


    \draw (circle_bot) to [out=270, in=90] (bot_outer);

    \node[dot] (bridge_right) at (5.2, 0.2) {};
    \draw (bridge_left) to [out=20, in=160] (bridge_right);
    
    \draw (6.5, 0.2) ellipse (1.3cm and 0.9cm);
    \node[red, below] at (6.5, -0.7) {};
\end{tikzpicture}
\caption{Special Jordan curves}
\label{Fig2}
\end{figure}}
\end{example}

\begin{lemma}\label{jcncsp}
Let $Y\subset \bb R^2$ be a compact, connected definable set of dimension $1$.
Then $Y$ is a Jordan curve if and only if it has no topological singular points.
\end{lemma}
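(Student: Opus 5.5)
The plan is to prove the two implications separately: the forward one by the local conic structure of definable sets, the converse by definable triangulation plus an elementary fact about finite graphs. Throughout, "topological singular point" means a point $p$ with $\deg(p,Y)\neq 2$, using the characterization $\deg(p,Y)=2$ iff $p$ is topologically regular, recorded before Lemma~\ref{Not-depend}.

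\emph{($\Rightarrow$)} Suppose $Y=\gamma([0,1])$ is a Jordan curve and fix $p\in Y$. Then $Y$ is homeomorphic to the circle, so every small neighborhood of $p$ in $Y$ is an arc through $p$. By Hardt's triviality theorem (the local conic structure), for $r>0$ small the set $Y\cap \mathbb B_r(p)\setminus\{p\}$ is definably homeomorphic to a cone over the finite set $Y\cap \mathbb S_r(p)$ with the vertex removed. Its number of components is the number of local branches of $Y$ at $p$, and for a Jordan curve this number is $2$. Hence $\deg(p,Y)=2$. Moreover $Y\cap\mathbb B_r(p)$ is a definable open neighborhood of $p$ in $Y$ that is definably homeomorphic to an open interval, so $p$ is topologically regular.

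\emph{($\Leftarrow$)} Assume every point of $Y$ has degree $2$.
\begin{enumerate}
\item \emph{Triangulate.} By the definable triangulation theorem there is a finite $1$-dimensional simplicial complex $K$ and a definable homeomorphism $\Phi:|K|\to Y$. Since $\dim Y=1$ and $Y$ is connected, $K$ is a connected graph with at least one edge and no isolated vertices.
\item \emph{Transfer degrees to $K$.} For a vertex $v$ of $K$, the number of edges of $K$ incident to $v$ equals the number of components of a small punctured neighborhood of $v$ in $|K|$. That number is a topological invariant of the germ, provided it is read off at sufficiently small radii. The point to check is that the number of components of $U\setminus\{p\}$ stabilizes over a cofinal family of small neighborhoods $U$ of $p$. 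This follows from the conic structure on both sides: small balls in $Y$ and the images under $\Phi$ of small stars in $|K|$ are mutually cofinal, so they yield the same count. Hence every vertex of $K$ has valence exactly $2$, and interior points of edges have valence $2$ automatically.
\item \emph{Combinatorics.} A finite connected graph in which every vertex has valence $2$ is a single cycle. To see this, start at a vertex and walk along unused edges. Valence $2$ means the walk never branches, and finiteness forces it to return to the starting vertex. The cycle obtained is closed under taking neighbors, so by connectedness it is all of $K$.
\item \emph{Conclude.} Therefore $|K|$ is homeomorphic to $\mathbb S^1$. Parametrize it by a map $[0,1]\to|K|$ that is injective on $[0,1)$, and compose with $\Phi$. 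This exhibits $Y$ as a Jordan curve.
\end{enumerate}

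The main obstacle is the invariance in step 2. The other steps are routine; step 2 is where the o-minimal conic-structure theorem is genuinely needed, to justify that the local branch count can be computed using either round balls or simplicial stars.
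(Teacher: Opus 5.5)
Your proof is correct, but for the converse you take a different route from the paper.

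\textbf{The paper's route.} The paper reads the hypothesis directly as a manifold condition. Every point of $Y$ has a neighborhood in $Y$ homeomorphic to an open interval, so $Y$ is a compact connected $1$-manifold without boundary. The classification of $1$-manifolds (Milnor) then gives $Y\cong\mathbb S^1$. No triangulation and no branch counting are needed.

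\textbf{Your route.} You triangulate, transfer regularity to the vertices of $K$, and use the fact that a finite connected $2$-regular graph is a cycle. This replaces the classification theorem with definable triangulation plus an elementary graph argument. The price is your step~2, which the paper avoids because its definition of a regular point already supplies a chart.

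\textbf{On step 2.} It works as sketched, but mutual cofinality by itself is not quite the reason. What makes it work is that the conic structure makes inclusions within each family (round balls, simplicial stars) bijective on components of the punctured sets. A two-sided sandwich $U_1\supset V\supset U_2$ and $V_1\supset U\supset V_2$ then forces the two counts to be equal. You could also skip degrees altogether. It is enough to note that the center of an open star with $k\neq 2$ prongs has no neighborhood homeomorphic to an interval: a small interval neighborhood inside the star, punctured at the center, has two components, and each one contains exactly one full prong near the center. The same elementary fact is what justifies your forward-direction claim that ``for a Jordan curve this number is $2$''.

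\textbf{What each approach buys.} The paper's argument is shorter. Yours stays inside o-minimal tools and yields a definable homeomorphism from the polygon $|K|$ onto $Y$, so $Y$ is definably homeomorphic to a circle. The classification theorem only gives a topological homeomorphism, though that is all the definition of a Jordan curve requires. Your forward direction is also slightly more careful than the paper's one-liner. The conic structure produces the \emph{definable} neighborhood definably homeomorphic to an interval that the paper's definition of a regular point asks for, which the paper leaves implicit.
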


\begin{proof}
If $Y$ is a Jordan curve, then $Y$ is (locally) a simple arc: every point of $Y$ has a neighborhood in $Y$
homeomorphic to an open interval. Hence $Y$ has no topological singular points.

Conversely, assume that $Y$ has no topological singular points.
By definition, for every $y\in Y$ there exists a neighborhood $U$ of $y$ in $\bb R^2$ such that
$U\cap Y$ is homeomorphic to an open interval.
Therefore $Y$ is a $1$--dimensional topological manifold without boundary.
Since $Y$ is compact and connected, the classification of connected $1$--manifolds
(e.g.\ \cite[Appendix, Theorem]{Milnor}) implies that $Y$ is homeomorphic to the unit circle.
Consequently, $Y$ is a Jordan curve.
\end{proof}

\begin{theorem}\label{uojr}
Let $X\subset \bb R^2$ be a connected, compact, $1$-dimensional definable set, viewed as a plane graph.
If $X$ has no leaves in $\Int(B(X))$ and has no special Jordan curves, then $\Int(X)$ is a disjoint union of Jordan regions.
\end{theorem}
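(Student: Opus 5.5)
The goal is to show that every inner face $F$ of $X$ satisfies $F=\Int(\gamma_F)$, where $\gamma_F\in\ak J(\partial F)$ is the facial Jordan curve given by Lemma~\ref{graph-properties}(viii). Then $\Int(X)$ is the disjoint union of the Jordan regions $\Int(\gamma_F)$. The reduction is as follows. Since $F\subset\Int(\gamma_F)$, $F$ is open, and $F$ is a connected component of $\bb R^2\setminus X$, it suffices to prove $X\cap \Int(\gamma_F)=\varnothing$. Indeed, in that case $F$ is open and closed in the connected set $\Int(\gamma_F)$, so the two sets coincide. So I will argue by contradiction and assume $K:=X\cap\Int(\gamma_F)\neq\varnothing$.

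\textbf{Step 1.} First I would reduce to the configuration in which some connected component $C$ of $\overline{K}$ meets $\gamma_F$ in exactly one point $p$.
\begin{itemize}
\item Since $X$ is connected, every component of $\overline{K}$ meets $\gamma_F$.
\item Suppose a component contains an arc internally contained in $\Int(\gamma_F)$ and joining two distinct points $a,b\in\gamma_F$. This arc splits $\overline{\Int(\gamma_F)}$ as in Lemma~\ref{lem_theta} into two Jordan curves $\sigma_1,\sigma_2<\gamma_F$, each a union of a subarc of $\gamma_F$ and the arc.
\item The face $F$ lies in one of them, say $\sigma_1$. Pick an edge $e\subset\gamma_F\cap\sigma_1$ with $e\subset\partial F$; such an edge exists because $\gamma_F=\partial F\cap\partial\ak F_\infty(\partial F)$. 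Lemma~\ref{graph-properties}(vii) then gives $F\subset\Int(\sigma_1)$.
\item I would then show $\sigma_1\subset\partial F$. This contradicts the uniqueness part of Lemma~\ref{graph-properties}(viii), or alternatively the description $\gamma_F=\partial F\cap\partial\ak F_\infty(\partial F)$, because the arc side of $\sigma_1$ separates part of $\gamma_F$ from $F$.
\end{itemize}
Proving $\sigma_1\subset\partial F$ requires choosing the arc carefully, namely as an arc of $\partial F$ itself. I would first handle components of $\overline K$ that meet $\partial F$, and treat those disjoint from $\partial F$ by the same argument applied inside a smaller face.

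\textbf{Step 2.} With $C\cap\gamma_F=\{p\}$, I would study the plane graph $\partial B(C)$ with the structure defined before Proposition~\ref{prop:boundary-properties}. Every leaf of $X$ lying in $C\setminus\{p\}$ belongs to $\Int(\gamma_F)\subset\Int(B(X))$, so by hypothesis no such leaf exists. Let $T$ be a spanning tree of the component of $\partial B(C)$ through $p$. By Proposition~\ref{prop:boundary-properties}(iii), each leaf of $T$ other than $p$ is either a leaf of $\partial B(C)$ or the endpoint of a loop of $\partial B(C)$.
\begin{itemize}
\item A leaf $q\neq p$ of $\partial B(C)$ would be a leaf of $X$. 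To see this, use Proposition~\ref{prop:boundary-properties}(i): any further edge of $X$ at $q$ lies in $\Int(B(C))$, and following it inward would again produce a leaf of $X$, or a Jordan curve, inside $\Int(\gamma_F)$. So this case is excluded.
\item Hence $T$ has a leaf $q\neq p$ that is the base of a loop $\beta$ of $\partial B(C)$. If $T=\{p\}$, Lemma~\ref{graph-properties}(vi) says $\partial B(C)$ is a union of loops at $p$; take one of them as $\beta$.
\end{itemize}

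\textbf{Step 3.} I would then check that $\beta$ is a special Jordan curve of $X$, which contradicts the hypothesis.
\begin{itemize}
\item $\beta<\gamma_F$, which gives the third condition of the definition.
\item By Proposition~\ref{prop:boundary-properties}(ii), $\Int(\beta)\cap\partial B(C)=\varnothing$. The only edges of $X$ at points of $\beta$ that are not contained in $\Int(\beta)$ are therefore the edges of $\partial B(C)$ at the base point, and this base point is unique. This gives the second condition.
\item $\beta$ is a loop of $X\setminus\Int(\beta)$, because all other points of $\beta$ have degree $2$ there. This gives the first condition.
\end{itemize}
When $T=\{p\}$ and the base point is $p$ itself, the extra edges of $\gamma_F$ at $p$ are also the only ones outside $\Int(\beta)$, so the same verification applies.

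I expect Step 1 to be the main obstacle. The delicate points are that components of $X\cap\Int(\gamma_F)$ may attach to $\gamma_F$ at several points, and may not touch $\partial F$ at all. Showing rigorously that such an attachment either yields a facial Jordan curve strictly smaller than $\gamma_F$ bounding $F$, or can be pushed into another face where the same argument runs, requires care with Lemma~\ref{graph-properties}(vii)--(ix). My first attempt would be an induction on the number of edges of $X$ inside $\Int(\gamma_F)$: treat each component of $\overline K$ separately, using the hypotheses on leaves and special curves only at the innermost level.
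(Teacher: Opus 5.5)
\textbf{Main gap: Step~1.} You leave Step~1 open, and the route you sketch for it is misdirected. Proving $\sigma_1\subset\partial F$ by choosing the arc inside $\partial F$, plus an induction over smaller faces, is neither needed nor carried out. Also, components of $\overline K$ disjoint from $\partial F$ cannot occur, since each one meets $\gamma_F\subset\partial F$.

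The missing idea is the separation you mention only in passing, used directly. Suppose an arc $\alpha\subset X$ joins distinct points $a,b\in\gamma_F$ with $\mathring{\alpha}\subset\Int(\gamma_F)$. Write $\gamma_F=\tau_1\cup\tau_2$ and $\sigma_k=\alpha\cup\tau_k$.
\begin{itemize}
\item Then $\Int(\gamma_F)\setminus\mathring{\alpha}=\Int(\sigma_1)\sqcup\Int(\sigma_2)$. The connected set $F$ misses $X$, so it lies in one of them, say $\Int(\sigma_1)$.
\item A point $y\in\mathring{\tau}_2$ has a small disk that misses $\sigma_1$ and meets $\Ext(\gamma_F)\subset\Ext(\sigma_1)$. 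So the disk lies in $\Ext(\sigma_1)$ and misses $F$.
\item Hence $y\notin\partial F$, contradicting $\gamma_F\subset\partial F$.
\end{itemize}
The set $\overline K\cap\gamma_F$ is finite: by Lemma~\ref{lem3.1}, an edge not contained in $\gamma_F$ has interior disjoint from $\gamma_F$. So an arc in a component $C$ of $\overline K$ between two of its points on $\gamma_F$ would contain a subarc of this forbidden kind. Therefore \emph{every} component of $\overline K$ meets $\gamma_F$ in exactly one point, and no induction is needed.

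The paper never meets this step. It uses Lemma~\ref{jcncsp} to reduce to excluding topological singular points $p$ of $\partial U$. Degree $1$ is disposed of as a leaf. For degree $\ge 3$, it draws its own Jordan curve $\gamma$ through $p$ with $\gamma\setminus\{p\}\subset U$, passing through two different local sectors of $U$ at $p$. The closure of any component of $X\cap\Int(\gamma)$ then meets $\gamma$ only at $p$ by construction, and the argument with $\partial B(\cdot)$ and a spanning tree proceeds as in your Steps~2--3. Once Step~1 is repaired as above, enclosing by $\gamma_F$ itself is a valid and somewhat more direct route.

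\textbf{Repairs needed in Steps~2--3.} The corresponding passage of the paper is no more careful here.

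First, a leaf $q\neq p$ of $\partial B(C)$ is a leaf of $X$ for a local reason, not by following edges inward (a Jordan curve found that way gives no contradiction). Near $q$ the set $\partial B(C)$ is a single branch, so its complement in a small disk is connected. That complement meets the outer face of $C$, hence lies in it. So $C$, which coincides with $X$ near $q$, has only one branch at $q$. The same local argument, applied on the exterior side of $\beta$ at its non-base points, is what Step~3 actually uses; Proposition~\ref{prop:boundary-properties}(ii) only controls the interior side.

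Second, $p$ need not be a vertex of $\partial B(C)$, and in the degenerate case the vertex of $T$ need not be $p$. For example, let $C$ be a circle $\beta$ meeting $\gamma_F$ only at $p$, together with a segment from some $q\in\beta\setminus\{p\}$ into $\Ext(\beta)\cap\Int(\gamma_F)$ ending on a small circle. Then $q$ is a leaf of $T$ whose loop is $\beta\ni p$. This $\beta$ is not special, because the edges of $\gamma_F$ at $p$ also leave $\overline{\Int(\beta)}$; the special curve here is the small circle.

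The fix is to choose the leaf correctly:
\begin{itemize}
\item $p$ lies in the relative interior of at most one edge of $\partial B(C)$, and a tree with an edge has at least two leaves. So some leaf $x\neq p$ of $T$ has all its loops avoiding $p$.
\item With at least two loops at a single vertex, take one not passing through $p$ unless $p$ is that vertex.
\item If $\partial B(C)$ is a single Jordan curve, it is special with distinguished point $p$.
\end{itemize}
With these choices your verification in Step~3 goes through.
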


\begin{proof}
Let $U$ be a bounded face of $X$.
By Lemma \ref{jcncsp}, it suffices to show that $\partial U$ has no topological singular points. Indeed, suppose on the contrary that  $p\in \partial U$ is a topological singular point. 
We consider two cases:

\noindent\textbf{Case 1.} $\deg(p,\partial U)=1$.

Let $e$ be the unique edge of $\partial U$ incident to $p$.
Choose a sufficiently small closed disk $D$ centered at $p$ such that $D\cap X$ is a subarc of $e$. Since $U$ is a face and $e\subset \partial U$, and $e$ lies on no Jordan curve, by Lemma \ref{graph-properties} (iii),  $D\setminus e$
is contained in $U$.

We claim that $p$ is incident to no edge of $X$ other than $e$.
Indeed, if there were another edge $e'\subset X$ with $e'\neq e$ and $p\in e'$, then $e'\cap D$
would contain a subarc leaving $p$ and contained in $D\setminus e$. This subarc must be contained in $U$, and this is impossible since $e' \cap U =  \varnothing$. 
Thus,  $e$ is the only edge of $X$ incident to $p$, and hence $\deg(p,X)=1$. This contradicts the fact that $X$ has no leaves.

\noindent\textbf{Case 2:} $\deg(p,\partial U)\ge 3$ for some $p\in \partial U$.

Since $\partial U\subset X$, we have $d:=\deg(p,X)\ge 3$.
Let $U_1,\dots,U_d$ be the connected components of the germ $(\bb R^2\setminus X,p)$.

We claim that the face $U$ contains at least two distinct components $U_i$ and $U_j$. Indeed, if $U$ contained only one component $U_i$, then locally near $p$ the boundary $\partial U$
would agree with $\partial U_i$, which is bounded by exactly two local branches of $X$.
Hence $\deg(p,\partial U)=2$, a contradiction.

\smallskip
Choose two distinct components $U_i,U_j\subset U$.
Let $X_{i1}^p,X_{i2}^p$ (resp.\ $X_{j1}^p,X_{j2}^p$) be the two local branches of $(X\setminus\{p\},p)$
bounding $U_i$ (resp.\ $U_j$).
Up to relabeling, assume that the local cyclic order at $p$ satisfies
$$
X_{i1}^p < X_{i2}^p \le X_{j1}^p < X_{j2}^p \le X_{i1}^p .
$$

Pick points $p_i\in U_i$ and $p_j\in U_j$ close to $p$. Choose
definable arcs $\alpha_i,\alpha_j$ from $p$ to $p_i,p_j$ such that
$$\alpha_i\setminus\{p\}\subset U_i,\quad \alpha_j\setminus\{p\}\subset U_j \quad \text{and} \quad \alpha_i\cap \alpha_j=\{p\}.$$
Since $U$ is connected, we can choose a definable arc $\alpha\subset U$
joining $p_i$ to $p_j$ and disjoint from
$\alpha_i\cup \alpha_j$ except at $p_i,p_j$. Set
$$
\gamma:=\alpha_i\cup \alpha \cup \alpha_j .
$$
Then $\gamma$ is a Jordan curve satisfying
$$
\gamma \setminus \{p\} \subset U
\quad \text{and} \quad
X\cap \Int(\gamma)\neq\varnothing
$$
(by the cyclic order, two of the branches among
$X_{i1}^p,X_{i2}^p,X_{j1}^p,X_{j2}^p$ lie in $\Int(\gamma)$).

Let $\gamma_U$ be the Jordan curve in $\partial U$ such that $U \subset \Int(\gamma_U)$ obtained from Lemma \ref{graph-properties} (viii). It is clear that  $\gamma < \gamma_U$.
Let $Z$ be the closure of a connected component of $X\cap \Int(\gamma)$.
Then $\partial B(Z)\subset X$ and $\partial B(Z)<\gamma$.

\smallskip
First, assume that $\partial B(Z)$ is a Jordan curve. Since
$\partial B(Z)\subset X$, it lies strictly inside $\gamma$, and hence
inside $\gamma_U$. Moreover, $X$ meets $\gamma$ only at $p$, and since $X$ is connected and contains points outside $B(Z)$, there exists an edge of $X$ incident to $p$ that is not contained in
$\Int(\partial B(Z))$. Therefore, $\partial B(Z)$ is a special Jordan
curve, contradicting the hypothesis.

Thus, $\partial B(Z)$ is not a Jordan curve.
Let $T$ be a spanning tree of the plane graph $\partial B(Z)$.

If $T$ has only one vertex, then by Lemma~\ref{graph-properties} (vi), the graph $\partial B(Z)$ is a union of
(at least two) loops with a common endpoint $p$.
In particular, $\partial B(Z)$ contains a loop $\ell$ with $\ell<\gamma < \gamma_U$; such a loop yields a special Jordan curve of $X$,
again a contradiction.

Hence $T$ has at least one edge and therefore has a leaf $x\neq p$.
By Proposition~\ref{prop:boundary-properties} (iii), the point $x$ is
either a leaf of $\partial B(Z)$ or the endpoint of a loop of
$\partial B(Z)$.
In the former case, $x$ is a leaf of $X$, and since
$x\in \Int(\gamma)$, we have $x\in \Int(B(X))$, contradicting the
assumption that $X$ has no leaves in $\Int(B(X))$.
In the latter case, the loop at $x$ is a special Jordan curve of $X$,
which is also excluded by hypothesis.

In all cases we obtain a contradiction.
Therefore, $\partial U$ has no topological singular points of degree $\ge 3$.
This ends the proof of the theorem.
\end{proof}

\section{Topological extension: the bounded case}\label{section3}
Let $X\subset \mathbb R^2$ be a definable compact set of dimension $\leq 1$. We regard $X$ as a plane graph endowed with the graph structure introduced in Definition~\ref{def_definable-plane-graph}. By Lemma~\ref{graph-properties} \textnormal{(viii)}, for each inner face $F$ of $X$ there exists a unique Jordan curve $\gamma_F$ such that
$$
F \subset \Int(\gamma_F)
\qquad\text{and}\qquad
\gamma_F \subset \partial F.
$$
We call such a curve a \emph{facial Jordan curve}, and denote by $\ak J_{facial}(X)$ the set of all facial Jordan curves of $X$.

Given $c \in \{ \pm 1\}$,
consider the following conditions: 

$(E1)$ $\ord(h,p)=c$ for every $p \in X$ with $\deg(p,X)\ge 3$;

$(E2)$ $w(h,\gamma)=c$ for every  $\gamma \in \ak J(X)$;

$(E2_F)$  $w(h,\gamma) = c$ for every $\gamma \in \ak J_{facial} (X)$;

$(E3)$ for every $\gamma \in \ak J(X)$ and every point $x \in X\setminus \gamma$, we have
  $$
    x \in \Int(\gamma) \iff  h(x)\in \Int(h(\gamma));
  $$

($E3_F$) for every $\gamma\in \ak J_{facial}(X)$  and every point $x \in X\setminus \gamma$, we have
  $$
    x \in \Int(\gamma) \iff  h(x)\in \Int(h(\gamma));
  $$

It is obvious that $(E2) \Rightarrow (E2_F)$ and $(E3) \Rightarrow (E3_F)$.

\begin{lemma}\label{lem_condition_equivalence}
The following statements hold:
\begin{enumerate}
\item[(i)] $(E1) + (E2_F) \Longrightarrow (E2)$
\item[(ii)] $(E1) + (E2_F) + (E3_F) \Longrightarrow (E3)$
\item[(iii)] If $X$ is connected then 
$$ (E1) + (E2_F) \Longrightarrow (E3).$$
\end{enumerate}
\end{lemma}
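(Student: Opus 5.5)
Throughout, $X$ is regarded as a plane graph, $c$ is fixed, and the argument runs by induction on the number of inner faces of $X$ contained in $\Int(\gamma)$ for $\gamma\in\ak J(X)$. Note that $\Int(\gamma)$ is a union of faces of $X$, edges of $X$ and vertices. The main tool is Lemma~\ref{lem_theta}, which propagates the value $c$ from two smaller Jordan curves to the outer one. Lemma~\ref{ord-add1} converts information on orientations and cyclic orders into information on which side of $h(\gamma)$ a branch is mapped to.

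For (i), let $\gamma\in\ak J(X)$. If $\Int(\gamma)$ contains exactly one face $F$, then $\Int(\gamma)\cap X=\varnothing$ except possibly for trees or components not separating, and $\gamma=\gamma_F$ by the uniqueness in Lemma~\ref{graph-properties}(viii). Hence $w(h,\gamma)=c$ by $(E2_F)$. Otherwise $\gamma\ne\gamma_F$ for any face $F\subset\Int(\gamma)$. Take an inner face $F\subset \Int(\gamma)$ adjacent to $\gamma$ along an edge $e\subset\gamma$. By Lemma~\ref{graph-properties}(vii), $\gamma_F<\gamma$ and $e\subset\gamma_F$. Since $\gamma_F\neq\gamma$, there is a maximal subarc $\delta$ of $\gamma_F$ with $\mathring\delta\subset\Int(\gamma)$ and endpoints $p,q\in\gamma$; these endpoints have degree $\ge3$ in $X$. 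Then $\delta$ splits $\gamma$ into a theta configuration $\alpha\cup\beta\cup\delta$, with $\gamma_1=\alpha\cup\delta$ and $\gamma_2=\delta\cup\beta$. Each $\gamma_i$ contains strictly fewer faces, so by induction $w(h,\gamma_i)=c$. Also $\ord(h,p)=\ord(h,q)=c$ by $(E1)$, applied via Lemma~\ref{ord-remove} on the subgraph $\alpha\cup\beta\cup\delta$ (degree $3$ at $p,q$). Lemma~\ref{lem_theta} then gives $w(h,\gamma)=c$.

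For (iii), assume $X$ is connected and fix $\gamma\in\ak J(X)$ and $x\in X\setminus\gamma$. By connectedness there is an arc $\sigma\subset X$ from $x$ to some $p\in\gamma$ with $\sigma\cap\gamma=\{p\}$. Then $\deg(p,X)\ge3$, so $\ord(h,p)=c$ by $(E1)$, and $w(h,\gamma)=c$ by (i). Lemma~\ref{ord-add1}, applied with $\alpha=\sigma$, says $\sigma\setminus\{p\}\subset\Int(\gamma)$ if and only if $h(\sigma)\setminus\{h(p)\}\subset\Int(h(\gamma))$. Here $\sigma\setminus\{p\}$ lies entirely on one side of $\gamma$ because it is connected and misses $\gamma$, and the same holds for $h(\sigma)$ since $h$ is injective. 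Since $x\in\sigma\setminus\{p\}$, we obtain $x\in\Int(\gamma)\iff h(x)\in\Int(h(\gamma))$.

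For (ii), the general (disconnected) case, apply (iii) within the connected component $X_0$ of $X$ containing $\gamma$, using (i) there, to handle points $x$ in that component. For $x$ in another component $X_1$, the whole of $X_1$ lies on one side of $\gamma$, and $h(X_1)$ lies on one side of $h(\gamma)$. The face of $X$ containing $X_1\setminus$ (nothing), i.e. the face of $X_0$ containing $X_1$, is some face $F_0$ of $X_0$. Either $F_0$ is the outer face, or it has a facial curve $\gamma_0=\gamma_{F}$ for a suitable face $F$ of $X$ adjacent to $X_1$'s surroundings; here I would use Lemma~\ref{graph-properties}(viii) applied to the face of $X$ bordering $X_0$ around $X_1$. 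By $(E3_F)$, $X_1\subset\Int(\gamma_0)\iff h(X_1)\subset\Int(h(\gamma_0))$. Moreover, $x\in\Int(\gamma)$ iff $\gamma_0\le\gamma$ (resp. impossible if $F_0$ is outer). Since $\gamma_0$ and $\gamma$ lie in the connected $X_0$, the relation $\gamma_0\le\gamma$ is transferred by (iii) together with Lemma~\ref{BS}. Combining these gives $(E3)$.

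The main obstacle is this last bookkeeping step: identifying the correct facial curve that encloses a foreign component. A secondary difficulty is verifying in (i) that the endpoints of $\delta$ have degree $\ge3$ and that the face counts strictly decrease.
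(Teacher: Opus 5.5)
\textbf{Parts (i) and (iii).} These are sound and follow the paper's strategy: induction on the number of faces inside $\gamma$, closed by Lemma~\ref{lem_theta}; and an arc from $x$ to $\gamma$ fed into Lemma~\ref{ord-add1}. Two small fixes in (i):
\begin{enumerate}
\item[(a)] A curve with at least two faces inside can still equal $\gamma_F$ for the face $F$ along an edge $e\subset\gamma$, for instance an annular face around a nested component. In that case $(E2_F)$ applies directly, so the sentence ``otherwise $\gamma\neq\gamma_F$ for any face'' must become a case distinction.
\item[(b)] The facts $\gamma_F\le\gamma$ and $e\subset\gamma_F$ follow from Lemma~\ref{BS} and Lemma~\ref{graph-properties}(viii), not from (vii).
\end{enumerate}
Taking $\delta$ to be a component of $\gamma_F\setminus\gamma$ is actually more robust than the paper's decomposition. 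The paper removes the maximal common arc $\beta$ and asserts that $\alpha\cup\eta$ is a Jordan curve. That fails when $\gamma$ and $\gamma_F$ meet along several disjoint arcs, e.g.\ a circle with two parallel chords and $e$ a side arc of the middle face. Your $\delta$ always yields a genuine theta, with both halves handled by induction.

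\textbf{The gap is in (ii), for $x$ in a component $X_1\neq X_0$.} Your bookkeeping gives $x\in\Int(\gamma)\iff\gamma_0\le\gamma\iff h(\gamma_0)\le h(\gamma)$, together with $h(x)\in\Int(h(\gamma_0))$. This proves only $x\in\Int(\gamma)\Rightarrow h(x)\in\Int(h(\gamma))$.

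The converse would need $h(x)\in\Int(h(\gamma))\Rightarrow h(\gamma_0)\le h(\gamma)$, i.e.\ that $h(x)$ lies in the face of $h(X_0)$ bordered from inside by $h(\gamma_0)$. But $(E3_F)$ only places $h(x)$ somewhere in $\Int(h(\gamma_0))$. That region may contain further faces of $h(X_0)$, and interiors of Jordan curves of a plane graph may overlap without being nested. In the outer case you likewise show $x\notin\Int(\gamma)$, but never that $h(X_1)$ lies in the outer face of $h(X_0)$.

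The paper gets the converse by rerunning the argument for $h^{-1}$. That is not free either: it needs $(E3_F)$ for $h^{-1}$, i.e.\ that the Jordan curve $\gamma'_G$ of the face $G$ of $h(X_0)$ containing $h(x)$ pulls back to a facial curve of $X$. The hypotheses only speak about facial curves of $X$.

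One way to close the gap:
\begin{enumerate}
\item[(1)] By (i) and the argument of (iii), $h|_{X_0}$ satisfies (E1)--(E3). With Lemma~\ref{BS}, it therefore preserves strict nesting in $\ak J(X_0)$.
\item[(2)] By Lemma~\ref{graph-properties}(vii),(viii), a curve $\tau\in\ak J(X_0)$ is the curve of a face of $X_0$ iff no $\tau'<\tau$ in $\ak J(X_0)$ contains a given edge of $\tau$. This property is transported by $h$.
\item[(3)] Hence $h^{-1}(\gamma'_G)$ is the curve of a face of $X_0$, and therefore facial in $X$.
\item[(4)] Now $(E3_F)$, together with $h^{-1}(\gamma'_G)\le\gamma$, gives $x\in\Int(\gamma)$ whenever $h(x)\in\Int(h(\gamma))$. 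This covers both the inner and the outer case.
\end{enumerate}

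Finally, the step you flag as the main obstacle, namely that $\gamma_0$ is facial in $X$, is exactly the paper's claim. Put $d=\dist(\partial F_0,X\cap F_0)$. The points of $F_0$ within distance $d/2$ of $\partial F_0$ avoid $X$ and lie in one face $F'$ of $X$ with $\partial F_0\subset\partial F'$. Uniqueness in Lemma~\ref{graph-properties}(viii) then gives $\gamma_{F'}=\gamma_0$.
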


\begin{proof}
{(i)}  Let $\gamma\in \ak J(X)$. We prove that
$$
w(h,\gamma)=c.
$$
We argue by induction on the number
$$
N(\gamma):=\#\{F:\ F \text{ is a face of }X\text{ and }F\subset
\Int(\gamma)\}.
$$

If  $N(\gamma) = 1$ (or equivalently,  $\gamma$ is a facial Jordan curve) then the conclusion follows immediately
from Condition $(E2_F)$.

Now suppose that $N(\gamma) \geq 2$. Choose an edge $e\subset \gamma$. Let $F$ be the face contained in $\Int(\gamma)$ whose boundary contains
$e$.
Let $\gamma_F$ be the facial Jordan curve associated with $F$. Thus
$$
F\subset \Int(\gamma_F)
        \quad \text{ and } \quad
\gamma_F\subset \partial F.
$$
Since $F\subset \Int(\gamma)$, we have
\textcolor{orange}{$$
\gamma_F \leq \gamma.
$$}
If $\gamma_F = \gamma$, then the result  again follows by Condition $(E2_F)$. So we may assume that $\gamma_F < \gamma$.
Because a neighborhood of a point in $e$ meets both $F$ and $\Ext(\gamma) \subset \ak F_\infty (\partial F)$, by Lemma \ref{graph-properties} (viii), 
$$e \subset \partial  F \cap \partial (\ak F_\infty (\partial F)) = \gamma_F.$$
Hence $\gamma$ and $\gamma_F$ share an arc $e$. 
Note that $e$ is not a loop edge since, otherwise, $\gamma_F=\gamma,$ a contradiction.
Let $\beta$ be the maximal common arc of $\gamma$ and $\gamma_F$ containing
$e$. 
Again, $\beta$ is not a loop edge so let $p,q$ be the endpoints of $\beta$.  
Write
$$
\gamma=\beta\cup\eta,
        \qquad
\gamma_F=\beta\cup\alpha,
$$
where $\eta=\gamma \setminus \mathring{\beta}$ and $\alpha=\gamma_F\setminus \mathring{\beta}$ are arcs with endpoints $p$ and $q$. 
Then
$$
\sigma:=\alpha\cup\eta
$$
is also a Jordan curve contained in $X$. Thus, the three arcs $\alpha,\beta,\eta$ form three Jordan curves containing $p$ and $q$. 
The largest curve is $\gamma$ and the two smaller curves are $\gamma_F$ and $\sigma.$

By construction, $\sigma < \gamma$, and
$\Int(\sigma)$ contains strictly fewer faces of $X$ than
$\Int(\gamma)$. Indeed, $F\not\subset \Int(\sigma)$.
Hence,
$$
N(\sigma)<N(\gamma).
$$
By the induction hypothesis,
$$
w(h,\sigma)=c.
$$
On the other hand, $\gamma_F$ is facial, so by $(E2_F)$,
$$
w(h,\gamma_F)=c.
$$

At the endpoints $p$ and $q$, the graph $X$ has at least three local
branches, namely the branches coming from $\alpha,\beta,\eta$. Hence $p$ and
$q$ are topological singular points of $X$. By $(E1)$,
$$
\ord(h,p)=\ord(h,q)=c.
$$
By Lemma \ref{ord-remove}, 
$$
\ord(h_{\alpha\cup \beta\cup \eta},p)=\ord(h|_{\alpha\cup \beta\cup \eta},q)=c.
$$
Applying Lemma~\ref{lem_theta} to the three arcs $\alpha,\beta,\eta$ gives $w(h, \gamma) = c$. This proves $(E2).$

{(ii)} By Item~{(i)}, both Conditions~$(E1)$ and~$(E2)$ hold. It remains to show that Conditions~$(E3)$ also hold.

Fix $\gamma\in \mathfrak J(X)$ and let $x\in X\setminus \gamma$ be such that $
x\in \Int(\gamma)$. We will prove that
$$
h(x)\in \Int(h(\gamma)).
$$
The converse implication follows by applying the same argument to $h^{-1}$.

Before proceeding, let us introduce some notation. Let $A,B\subset \mathbb R^2$ be compact definable sets of dimension $1$ such that $A\cap B=\varnothing$. Then $A$ is contained in a face of $B$. We write
$$
A<B
$$
if $A$ is contained in an inner face of $B$.

We distinguish two cases.

\medskip

\noindent
\textbf{{Case 1.}} \emph{$x$ and $\gamma$ are contained in the same connected component of $X$.}

By assumption, there exists an arc $\alpha \subset X$ joining $x$ to a point $p\in\gamma$ such that
$
\alpha\cap \gamma=\{p\}.
$
We may assume that 
$$\mathring{\alpha}\subset \Int(\gamma).$$
By Conditions $(E1)$ and $(E2)$,
$$
\ord(h,p)=c=w(h,\gamma).
$$
Hence, by Lemma~\ref{ord-add1},
$$
h(\mathring{\alpha})\subset \Int(h(\gamma)).
$$
In particular,
$$
h(x)\in \Int(h(\gamma)).
$$

\medskip

\noindent
\textbf{{Case 2.}} \emph{$x$ and $\gamma$ are not contained in the same connected component of $X$.}

Set
$$
Y:=X\cap \overline{\Int(\gamma)}.
$$
Let $Y_1$ be the connected component of $Y$ containing $x$, and choose a maximal chain
$$
Y_1<\cdots<Y_k
$$
of connected components of $Y$, where $Y_k$ is the connected component containing $\gamma$.
Since $x$ and $\gamma$ do not lie in the same connected component of $X$, we have $k\ge 2$.

Since $Y_{k-1}<Y_k$, there exists an inner face $F$ of $Y_k$ such that
$$
Y_{k-1}\subset F.
$$
Note that $F$ is not a face of $X$. 
Let $\gamma_F$ be the Jordan curve associated with $F$ by
Lemma~\ref{graph-properties} (viii). Then
$$
F\subset \Int(\gamma_F)
\qquad\text{and}\qquad
\gamma_F.
$$
In particular,
$$
x\in Y_1\subset F\subset \Int(\gamma_F).
$$

We claim that $\gamma_F$ is a facial Jordan curve of $X$.
Indeed, as $F$ is a face of $Y_k$, we have
$$
F\cap Y_k=\varnothing.
$$
Since $X\cap F$ is compact and disjoint from
$\partial F$,
$$
d:=\dist(\partial F,X\cap F)>0.
$$
Set 
$$U: = \{ y\in F: \dist(y, \partial F) \leq d/2\}.$$
Then $U\cap X=\varnothing$. Hence $U$ is contained in a face $F'$ of $X$. Since
$U\subset  F \subset \Int(\gamma_F)$,  $F' \subset \Int(\gamma_F)$ which is bounded, it is an inner face
of $X$. Moreover, by definition,  every point of $\partial F$ is a limit point of $U$, hence a limit point of $F'$. Thus, $\partial F \subset \partial F'$. Since $\gamma_F \subset \partial F$, it follows that 
$
\gamma_F\subset \partial F'.$
This, together with the fact $F' \subset \Int (\gamma_F)$ and Lemma~\ref{graph-properties} (viii)  (the uniqueness) yields $\gamma_F = \gamma_{F'}$. Hence
$\gamma_F\in \mathfrak J_{{facial}}(X)$, proving the claim.

Now, since
$$
\gamma_F\subset Y_k,
$$
the curve $\gamma_F$ lies in the same connected component of $X$ as $\gamma$.
Applying Case~1 to the Jordan curve $\gamma$ and any point of $\gamma_F$, we obtain
$$
h(\gamma_F)\subset \Int(h(\gamma)).
$$
Consequently, $$\Int(h(\gamma_F))\subset \Int(h(\gamma)).$$
Since $x\in \Int(\gamma_F)$ and $\gamma_F$ is a facial Jordan curve of $X$,
Condition ($E3_F$) implies that
$$
h(x)\in \Int(h(\gamma_F)).
$$
Therefore
$$
h(x)\in \Int(h(\gamma_F))\subset \Int(h(\gamma)).
$$
In both cases, we have shown that
$$
x\in \Int(\gamma)\Longrightarrow h(x)\in \Int(h(\gamma)).
$$
Applying the same argument to $h^{-1}$, we obtain the converse implication. Hence
Condition ($E3$) holds.

{(iii)} By Item~(i), we  can assume that the condition (E2) holds. Let $\gamma\in\ak  J(X)$. If $X\setminus \gamma = \varnothing$ then the conclusion is trivial. We may assume that $X\setminus \gamma \neq \varnothing$ and let $x\in X\setminus\gamma$.
Since $X$ is connected, there exist a point $p\in\gamma$ and a definable arc
$\alpha\subset X$ joining $x$ to $p$ such that $\alpha\cap\gamma=\{p\}$.
Then $\alpha\setminus\{p\}$ is contained in exactly one of the two components
$\Int(\gamma)$ or $\Ext(\gamma)$; in particular,
$$
x\in\Int(\gamma) \Longleftrightarrow \alpha\setminus\{p\}\subset\Int(\gamma).
$$
Choose a definable subset $Y\subset X$ such that the germ of $Y$ at $p$
has the following three branches: one is the germ of $\alpha$ at $p$, and
the other two are the germs at $p$ of the connected components of
$\gamma\setminus \{p\}$.
Then $\deg(p,Y)=3$. By Condition~($E1$), we have
$\ord(h,p)=c$, and therefore $\ord(h|_Y,p)=c$ by Lemma~\ref{ord-remove}.

From Condition ($E2$) we have $w(h,\gamma)=c$.
Therefore, by Lemma \ref{ord-add1}, the branch $\alpha\setminus\{p\}$ lies in $\Int(\gamma)$ if and only if
$h(\alpha)\setminus\{h(p)\}$ lies in $\Int(h(\gamma))$.
Hence
$$
x\in\Int(\gamma) \Longleftrightarrow h(x)\in\Int(h(\gamma)),
$$
which is Condition $(E3)$.
\end{proof}

\begin{lemma}\label{same-boundary}
Let $X$ be a compact definable set of dimension at most $1$, and let
$h:X\to \mathbb R^2$ be a definable embedding satisfying conditions
$(E1)$--$(E3)$. Then, for every face $F$ of $X$, there exists a
unique face $F'$ of $h(X)$ such that
$$
\partial F' = h(\partial F).
$$
In particular:
\begin{enumerate}
    \item[(i)] $F$ is the outer face of $X$ if and only if $F'$ is the
    outer face of $h(X)$;

    \item[(ii)] $F$ is an inner face of $X$ if and only if $F'$ is an
    inner face of $h(X)$.
\end{enumerate}
\end{lemma}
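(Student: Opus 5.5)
The plan is to characterize each face of $X$ by data that are purely combinatorial in $X$ (which Jordan curves of $X$ contain it in their interior, and which edges lie on its boundary) and then transport these data by $h$ using conditions $(E1)$--$(E3)$. Concretely, to a face $F$ I attach the set
$$
\mathcal I(F):=\{\gamma\in\mathfrak J(X):\ F\subset \Int(\gamma)\}.
$$
By Lemma~\ref{graph-properties}(ix), distinct faces of $X$ have distinct sets $\mathcal I(F)$. The outer face is exactly the one with $\mathcal I(F)=\varnothing$.

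\textbf{Step 1 (boundary edges are transported consistently).} Fix an edge $e$ of $X$ and a point $x\in\mathring e$. Take a small disk $D$ around $x$ in which $X$ is a single arc, so $D\setminus X$ has two local sides. Choose a Jordan curve $\gamma\in\mathfrak J(X)$ through $e$ whenever one exists, using Lemma~\ref{graph-properties}(ii),(iii). The side of $e$ that lies in $\Int(\gamma)$ is transported by $h$ to the side of $h(e)$ lying in $\Int(h(\gamma))$. This is the content of Lemma~\ref{InOut} combined with $w(h,\gamma)=c$ from $(E2)$: the orientation of $e$ induced by $\gamma$ and the "left/right" convention determine the interior side, and $h$ multiplies both by $c$.

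\textbf{Step 2 (the face adjacent to a side).} Let $F$ be the face containing a given local side of $e$, and let $F'$ be the face of $h(X)$ containing the corresponding side of $h(e)$. I show $\mathcal I(F')=h(\mathcal I(F))$, noting that $h$ induces a bijection $\mathfrak J(X)\to\mathfrak J(h(X))$. Take $\sigma\in\mathfrak J(X)$.
\begin{itemize}
\item If $e\not\subset\sigma$, then $F\subset\Int(\sigma)$ if and only if $x\in\Int(\sigma)$, and by $(E3)$ this holds if and only if $h(x)\in\Int(h(\sigma))$, which in turn holds if and only if $F'\subset\Int(h(\sigma))$.
\item If $e\subset\sigma$, apply Step~1 with $\sigma$ in place of $\gamma$.
\end{itemize}
Hence $F'$ depends only on $F$ and not on the choice of $e$ and side: any two choices give faces with the same $\mathcal I$, and these are equal by (ix) applied in $h(X)$. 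This defines a map $F\mapsto F'$. By symmetry (applying the argument to $h^{-1}$, which also satisfies $(E1)$--$(E3)$ with the same $c$), the map is a bijection preserving $\mathcal I$. Faces whose boundary consists only of isolated vertices, or the case $X$ finite, are handled directly: then $F$ is the outer face and the statement is trivial.

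\textbf{Step 3 (boundaries).} Every edge $e\subset\partial F$ gives $h(e)\subset\partial F'$ by construction. Conversely, edges of $h(X)$ on $\partial F'$ pull back to edges on $\partial F$ by the symmetric argument. Isolated points and vertices on $\partial F$ are treated the same way, using $(E3)$ for points: an isolated point $v$ lies in the closure of the face $F$ exactly when $\mathcal I(F)=\{\sigma:\ v\in\Int(\sigma)\}$. This gives $\partial F'=h(\partial F)$. Uniqueness of $F'$ follows because $\mathcal I$ separates faces. Items (i) and (ii) then follow from $\mathcal I(F)=\varnothing \iff \mathcal I(F')=\varnothing$.

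The main obstacle I expect is Step~1 in full rigor: turning "the interior side of $e$ w.r.t.\ $\gamma$" into a side-correspondence that agrees for two different curves $\gamma,\sigma$ through $e$. I would do this by applying Lemma~\ref{InOut} to the pair $\gamma,\sigma$ and to the pair $h(\gamma),h(\sigma)$. Since $o(h(\gamma),h(\sigma))=o(\gamma,\sigma)$ by $(E2)$, and $h$ preserves which of cases (i)/(ii) occurs along $e$, the side assignment is consistent.
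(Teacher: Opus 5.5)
Your proof is correct in its main line, but it takes a genuinely different route from the paper.

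\textbf{How the paper argues.} It never matches the two sides of an edge.
\begin{itemize}
\item For the outer face it uses Lemma~\ref{graph-properties}(x): a point of $X$ lies off $\partial\mathfrak F_\infty(X)$ iff it lies inside some $\gamma\in\mathfrak J(X)$. Condition $(E3)$ transports this property directly.
\item For an inner face $F$ it fixes the facial curve $\gamma_F$ and an edge $e\subset\gamma_F$. It lets $F'$ be the face of $h(X)$ adjacent to $h(e)$ inside $h(\gamma_F)$, and proves in Claim~\ref{same-border} that the facial curve of $F'$ is $h(\gamma_F)$.
\item It then gets $h(\partial F)\subset\partial F'$ by contradiction: a curve from (ix) separates $h(p)$ from $F'$, and this is pulled back by $(E3)$.
\end{itemize}

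\textbf{How your route compares.} You encode each face by its enclosure set $\mathcal I(F)$, observe via (ix) that $\mathcal I$ is injective, and transport it edge by edge. This buys you several things:
\begin{itemize}
\item outer and inner faces are treated uniformly;
\item you get a bijection $F\mapsto F'$, with (i) and (ii) for free;
\item the inclusion $h(\partial F)\subset\partial F'$ becomes nearly tautological.
\end{itemize}
The price is that you need $(E2)$, whereas the paper's argument uses only $(E3)$. You also need the orientation bookkeeping of Step~1, which you resolve correctly. For $\gamma,\sigma\supset e$ both oriented counterclockwise, Lemma~\ref{InOut} says their interiors lie on the same side of $e$ iff they induce the same orientation on $e$. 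Under $(E2)$ the pushed-forward orientations of $h(\gamma),h(\sigma)$ are again both counterclockwise or both clockwise. So the same criterion applies to the images, and $h$ preserves agreement of induced orientations. Hence the side correspondence does not depend on the curve.

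\textbf{Two routine points that deserve a sentence.}
\begin{itemize}
\item In the first bullet of Step~2, the fact $x\notin\sigma$ comes from Lemma~\ref{lem3.1}.
\item A non-isolated point of $\partial F$ lies on an edge contained in $\partial F$, because a sector of the local cone at a vertex that lies in $F$ is bounded by edge branches. This is what lets vertices be handled by your edge argument.
\end{itemize}

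\textbf{The step that does not follow as written.} This is your last sentence, ``uniqueness of $F'$ follows because $\mathcal I$ separates faces.'' Injectivity of $\mathcal I$ gives uniqueness of the face with a prescribed enclosure set, not of a face with a prescribed boundary. Distinct faces can share a boundary. For example, if $X$ is a single Jordan curve and $F=\Int(X)$, then both faces of $h(X)$ have boundary $h(X)=h(\partial F)$, so uniqueness in the literal sense fails.

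The paper's uniqueness argument only compares inner faces with inner faces, and the outer face with itself. You should restrict the same way. Among inner faces uniqueness does hold: by (viii) the boundary determines $\gamma_F=\partial F\cap\partial\mathfrak F_\infty(\partial F)$. Then $F$ is the unique face of $X$ on the inner side of any edge of $\gamma_F$.
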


\begin{proof}
{(i)}
Let $F$ be the outer face of $X$ and let $F'$ be the outer face of $h(X)$.
We prove $\partial F' = h(\partial F)$.

Let $p\in\partial F$.
By Lemma \ref{graph-properties} (x), if $h(p)\notin\partial F'$, there exists a Jordan curve
$\sigma\in\ak  J(h(X))$ such that $h(p)\in\Int(\sigma)$.
By Condition ($E3$), $p\in\Int(h^{-1}(\sigma))$.
Hence, By Lemma \ref{graph-properties} (x), $p\not\in\partial F$, a contradiction.
Therefore, $h(p)\in\partial F'$, and $h(\partial F)\subset \partial F'$.

Conversely, by applying the same argument to $h^{-1}$, we obtain $\partial F'\subset h(\partial F)$. Hence, $\partial F'=h(\partial F)$. The uniqueness of $F'$ follows from the uniqueness of the outer face.

\smallskip
{(ii)}
Now let $F$ be an inner face of $X$.
By Lemma~\ref{graph-properties} (viii),  there exists a unique
$\gamma\in \ak  J(\partial F)$ such that $F\subset \Int(\gamma)$.
Choose an edge $e$ of $X$ contained in $\gamma$.
Then $h(e)$ is an edge of $h(X)$ contained in the Jordan curve $h(\gamma)$.

Let $F'$ be the face of $h(X)$ such that $h(e)\subset\partial F'$ and
$F'\subset\Int(h(\gamma))$.
We will show that $\partial F'=h(\partial F)$.
Let $\gamma'\in\ak  J(\partial F')$ be the (unique) Jordan curve such that
$F'\subset\Int(\gamma')$ in view of Lemma \ref{graph-properties} (viii).

\begin{claim}\label{same-border}
We have $\gamma'=h(\gamma)$.
In particular, $h(\gamma)\subset\partial F'$ and $F'\subset\Int(h(\gamma))$.
\end{claim}

\begin{proof}
Note that $\partial F' \subset \overline{\Int(h(\gamma))}$, and hence
$\gamma' \subset \overline{\Int(h(\gamma))}$.
By Lemma \ref{BS}, this implies that $\gamma' \leqslant h(\gamma)$.

We now show that $\gamma' = h(\gamma)$.
Suppose, for contradiction, that $\gamma' < h(\gamma)$.
Then, by Condition~($E3$) we obtain
$h^{-1}(\gamma') < \gamma$.
Since $\gamma \subset \partial F$, it follows that $F \subset \Ext(
h^{-1}(\gamma'))$.
On the other hand, we have
$h(e) \subset \partial F' \subset \overline{\Int(\gamma')}$,
and therefore either $h(e) \subset \gamma'$ or
$h(\mathring e) \subset \Int(\gamma')$.

If the former holds, then $e \subset h^{-1}(\gamma')$; together with
$h^{-1}(\gamma') < \gamma$ and Lemma  \ref{graph-properties} (vii), this implies
$F \subset \Int(h^{-1}(\gamma'))$, contradicting the fact that $F\subset \Ext(h^{-1}(\gamma'))$.

If the latter holds, then $\mathring e \subset \Int(h^{-1}(\gamma'))$.
Since $e \subset \partial F$, this again yields
$F \subset \Int(h^{-1}(\gamma'))$, leading to the same contradiction.

Therefore, $\gamma' = h(\gamma)$ and the claim follows.
\end{proof}

\smallskip
 \textbf{Inclusion $h(\partial F)\subset \partial F'$.}
Suppose for contradiction that there exists $p\in\partial F$ with
$h(p)\notin\partial F'$.
Then $h(p)\notin h(\gamma)$, and since $p\in\partial F\subset \overline{\Int(\gamma)}$,
we get $h(p)\in\Int(h(\gamma))$. 

Let $\{e_i\}_{i\in I}$ be the edges of $h(X)$ containing $h(p)$, and let
$$
Y:=h(X)\setminus\Bigl(\{h(p)\}\cup\bigcup_{i\in I}\mathring e_i\Bigr).
$$
Then $h(p)$ lies in an inner face $F''$ of $Y$. Since $h(p)\notin \partial F'$, it follows that $\mathring{e}_i\not\subset \partial F'$. Therefore, removing $\mathring{e}_i$ from $X$ does not affect the boundary $\partial F'$. Consequently, $F'$ remains an inner face of $Y$. Hence, $F'$ and $F''$ are different inner faces of $Y$.
By Lemma \ref{graph-properties} (ix), there exists
$\sigma\in\ak  J(Y)\subset\ak  J(h(X))$ separating $F'$ and $F''$.
Thus, either:
\begin{enumerate}
\item[(*)] $F'\subset \Int(\sigma)$ and $h(p) \in \Ext(\sigma)$, or
\item[(**)] $F' \subset \Ext(\sigma)$ and $h(p)\in \Int(\sigma)$.
\end{enumerate}

Assume that (*) holds. It follows that $\partial F' \subset \overline{\Int(\sigma)}$, which yields $h(\gamma)\le \sigma$.
By Condition~(E3) we get
$\gamma\le h^{-1}(\sigma)$.
Therefore,
$$
p\in\partial F\subset \overline{\Int(\gamma)}\subset \overline{\Int(h^{-1}(\sigma))}.
$$
Since $h(p) \in \Ext(\sigma)$, by Condition ($E3$), $p \in \Ext (h^{-1}(\sigma))$, which is a contradiction. 

If (**) holds, then by Condition~($E3$), we have
$
p\in \Int(h^{-1}(\sigma)).
$
Since $p\in \partial F$, it follows that
$
F\subset \Int(h^{-1}(\sigma)),
$
and hence
$
\gamma\le h^{-1}(\sigma).
$
Applying Condition~($E3$) again, we obtain
$
h(\gamma)\le \sigma.
$
By Claim~\ref{same-border}, we have
$
F'\subset \Int(h(\gamma))\subset \Int(\sigma),
$
which contradicts the assumption that
$
F'\subset \Ext(\sigma).
$

Consequently
$
h(\partial F)\subset \partial F'.
$

\smallskip
\textbf{Inclusion $\partial F'\subset h(\partial F)$.}
Applying the previous argument to $h^{-1}$ (which satisfies the same assumptions), we get $h^{-1}(\partial F')\subset\partial F$, i.e., $\partial F'\subset h(\partial F)$. 

Combining both inclusions gives $\partial F'=h(\partial F)$.

\smallskip

\textbf{Uniqueness of $F'$.} Suppose that there is another inner face $F^*$ of $h(X)$ such that 
$\partial F^*=\partial F'$.  By Claim \ref{same-border}, we have $h(\gamma)=\gamma' \subset \partial F'$. Choose an edge $e\subset\gamma'$. Since $e$ lies on a Jordan curve, Lemma \ref{graph-properties} (ii) implies that $e$ is contained in the boundary of exactly two faces of
$X$, one on each side of $\gamma'$. Both $F'$ and $F^*$ are contained in
$\Int(\gamma')$, so both must be the unique face adjacent to $e$ on the inner
side of $\gamma_F$. Hence $F^*=F'$.
Therefore, $F'$ is unique.
\end{proof}

\begin{proposition}\label{Join-Bound}
Let $X\subset \bb R^2$ be a compact definable set of dimension $\leq 1$ and $h:  X \to \bb R^2$ be a definable embedding.  Assume that Conditions~$(E1)$--$(E3)$ hold. Then, there exist a compact, connected definable set $\widetilde{X} \subset \bb R^2$ with $\dim \widetilde{X} = 1$,
$X \subset \widetilde X$ and a definable embedding
$\widetilde h\colon\widetilde X\to\mathbb R^2$ extending $h$
such that the following conditions are satisfied:
\begin{enumerate}
\item [(i)] There exists $R>0$ such that:
\begin{itemize}
\item $\mathbb S_R\subset \widetilde X\cap \widetilde h(\widetilde X)$;
\item $\widetilde X\cup \widetilde h(\widetilde X)\subset \overline{\mathbb B}_R$.
\item $X\cup h(X)\subset\Int(\mathbb S_R)$;
\item For $(x_1, x_2) \in \bb S_R$
$$
\widetilde{h}(x_1,x_2)=
\left\{
\begin{array}{rl}
(x_1,x_2), & \text{if } c=1\\
(-x_1,x_2), & \text{if } c=-1.
\end{array}
\right.
$$
\end{itemize}

\item [(ii)] Conditions $(E1)-(E3)$ hold for $\widetilde h$;
\end{enumerate}
\end{proposition}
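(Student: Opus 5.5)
Choose $R$ so large that $X\cup h(X)\subset \mathbb B_{R/2}$, and set $\widetilde h$ on $\mathbb S_R$ by the formula in (i). The plan is to connect every connected component of $X$ to $\mathbb S_R$ by an added arc in the source and a corresponding arc in the target. The image arcs must be placed so that $(E1)$--$(E3)$ survive for the enlarged map. Then $\widetilde X$ is $X\cup\mathbb S_R\cup\bigcup_j\alpha_j$, which is compact and connected.

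\textbf{Step 1 (outermost components).} A component $X_j$ of $X$ is called outermost if it lies in the outer face of every other component. By $(E3)$ and Lemma~\ref{same-boundary}, $h(X_j)$ is outermost in $h(X)$ exactly when $X_j$ is. Pick a point $a_j\in\partial\mathfrak F_\infty(X)\cap X_j$. Then $h(a_j)\in\partial\mathfrak F_\infty(h(X))$ by Lemma~\ref{same-boundary}(i). Join $a_j$ to a point $b_j\in\mathbb S_R$ by a definable arc $\alpha_j$ whose interior lies in $\mathfrak F_\infty(X)\cap\mathbb B_R$, keeping these arcs pairwise disjoint.

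The delicate point is the local position of $\alpha_j$ at $a_j$. If $\deg(a_j,X)\ge 2$, the arc enters a specific sector at $a_j$ adjacent to the outer face. The image arc $\alpha'_j$ from $h(a_j)$ must enter the corresponding image sector, which is adjacent to the outer face of $h(X)$ by Lemma~\ref{same-boundary}. Then $\ord(\widetilde h,a_j)=c$ holds by the cyclic-order computation of Lemma~\ref{ord-add1} and Lemma~\ref{OCO}. The endpoints $b_j$ must also be ordered on $\mathbb S_R$ so that the cyclic order of the points $\widetilde h(b_j)$ agrees with the image arcs. This is possible because the outer face of $h(X)\cup\mathbb S_R$ inside the annulus is connected, and we can route the $\alpha'_j$ in any prescribed cyclic order. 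To make the choice consistent, I would choose all the $\alpha_j$ first and then read off the cyclic order of their endpoints on $\mathbb S_R$. Each $\widetilde h|_{\alpha_j}$ is defined as any definable homeomorphism onto $\alpha'_j$.

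\textbf{Step 2 (nested components).} For a component $Y$ lying in an inner face $F$ of another component, I would proceed inductively along the nesting chains used in Lemma~\ref{lem_condition_equivalence}(ii). Join $Y$ by an arc inside $F$ to the facial curve $\gamma_F$, which belongs to an already connected piece. In the target, $h(Y)$ lies in $\Int(h(\gamma_F))$ by $(E3_F)$, and in the image face $F'$ given by Lemma~\ref{same-boundary}. So the image arc can be drawn inside $F'$, again entering the correct local sectors at both endpoints. After finitely many steps $\widetilde X$ is connected.

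\textbf{Step 3 (verification of (ii)).} By Lemma~\ref{lem_condition_equivalence}(iii), for connected $\widetilde X$ it suffices to check $(E1)$ and $(E2_F)$. $(E1)$ holds at old vertices by the sector choices and Lemma~\ref{ord-remove}. At new vertices on $\mathbb S_R$ it holds by the reflection/identity convention combined with the matching cyclic order. For $(E2_F)$, the new facial curves are of two kinds. Some are faces of $X$ unchanged apart from added tree-like arcs, and keep $w=c$ by $(E2)$ for $h$. The others are made of arcs of $\mathbb S_R$, two added arcs and a boundary path of $X$. For the latter I would apply Lemma~\ref{ord-add1} at a vertex with $\ord=c$ and a third branch lying on a known side to get $w=c$.

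I expect the main obstacle to be Step 1's compatibility of cyclic orders. This is needed both at $\mathbb S_R$ and at the attaching points on the outer boundary of $X$, where a point may appear several times along the boundary walk. I would handle it by attaching at most one arc per outermost component, at a point of an edge interior if available, so the attaching point has degree $3$ and only a binary side choice is needed.
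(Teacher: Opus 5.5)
Your plan is essentially the paper's proof: the paper also joins the components of $X\cup\mathbb S_R$ one arc at a time through a common face, draws the image arc in the corresponding face given by Lemma~\ref{same-boundary}, and gets $\ord=c$ at each new degree-$3$ point from Lemma~\ref{ord-add1}. It differs only in attaching at leaves or at regular points of Jordan curves (which avoids your side choice on bridges), and in checking $(E3)$ directly rather than through Lemma~\ref{lem_condition_equivalence}(iii). The difficulties you anticipate do not actually occur: each added arc joins two distinct components, so Lemma~\ref{graph-properties}(v) gives $\mathfrak J(\widetilde X)=\mathfrak J(X)\cup\{\mathbb S_R\}$, hence your second kind of facial curve in Step~3 does not exist, $(E1)$ at points of $\mathbb S_R$ is just Lemma~\ref{ord-add1} with $\gamma=\mathbb S_R$ (no cyclic-order matching is needed), and the image arcs can be routed one at a time because each such cut leaves the face connected.
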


\begin{proof}
Choose $R>0$ such that both $X$ and $h(X)$ are contained in $\Int(\mathbb{S}_R)$.
Set $X_0 := X \cup \mathbb{S}_R$. We define an extension
$h_0 : X_0 \to \mathbb{R}^2$ of $h$ by setting $h_0|_X = h$ and, for $(x_1,x_2)\in\mathbb{S}_R$,
$$
h_0(x_1,x_2)=
\left\{
\begin{array}{rl}
(x_1,x_2), & \text{if } c=1\\
(-x_1,x_2), & \text{if } c=-1.
\end{array}
\right.
$$
It is clear that $h_0$ is a definable embedding satisfying
Conditions~(E1)--(E3).

We now iteratively connect the connected components of $X_0$
(and of $h_0(X_0)$) by adding definable arcs  internally contained in $\Int(\bb S_R)$  and extending $h_0$
along these arcs. At each step, the number of connected components
is reduced by one. The process stops once the resulting sets are connected.

Let $n$ be the number of connected components of $X_0$. We will prove by induction the following statement: for each $i = 0, \ldots, n-1$ there is a definable set $X_i$ such that : 
\begin{itemize}
    \item $X_{i-1} \subset X_i$
    \item $X_i \subset \overline{\Int(\bb S_R)}$
    \item $\ak J(X_{i-1}) = \ak J(X_i)$
    \item $X_i$ has $(n-i)$ connected components
    \item there is a  definable embedding  $h_i: X_i \to \bb R^2$ extending $h_{i-1}$ such that Conditions $(E1)$--$(E3)$ are satisfied.
\end{itemize}
Then $\widetilde{X}:=X_{n-1}$ is connected, and $\widetilde{h}:=h_{n-1}$ is the desired extension of $h$.
Here, by convention, we set $X_{-1}=X_0$ and $h_{-1}=h_0$.

For $i=0$, the construction is trivial. Assume that $X_i$ and $h_i:X_i\to\bb R^2$, with $0\le i<n-1$, have been constructed. We now show how to construct $X_{{}i+1}$ and $h_{i+1}$.

Let $F_i$ be a face of $X_i$ whose boundary intersects at least two
different connected components of $X_i$, say $X_{i,1}$ and $X_{i,2}$.
Set
$$
Z_{i,j} := X_{i,j} \cap \partial F_i \qquad (j=1,2).
$$
For each $j\in\{1,2\}$, choose a point $p_{i,j}\in Z_{i,j}$ as follows:
  \begin{itemize}
  \item if $Z_{i,j}$ contains a leaf, choose $p_{i,j}$ to be such a leaf. Recall that a leaf means either a vertex of degree $0$ (isolated
point) or a vertex of degree $1$.
  \item otherwise, choose $p_{i,j}$ to be a point lying on a Jordan curve
        contained in $Z_{i,j}$ at which $X_i$ is topologically regular.
  \end{itemize}

Let $\alpha_{i}$ be a definable arc internally contained in $F_i$ joining
$p_{i,1}$ and $p_{i,2}$.
Let $F_i'$ be the unique face of $h_i(X_i)$ corresponding to $F_i$ obtained by Lemma \ref{same-boundary}. Then
$$
h_i(X_{i,j}) \cap \partial F_i'
= h_i(X_{i,j}) \cap h_i(\partial F_i)
= h_i(X_{i,j}\cap \partial F_i)
= h_0(Z_{i, j}).
$$
Hence there exists a definable arc $\beta_i$ internally contained in  $F_i'$
connecting $h_i(p_{i,1})$ and $h_i(p_{i,2})$.

Define $X_{i+1} := X_i \cup \alpha_i$ and define
$h_{i+1} : X_{i+1} \to \mathbb{R}^2$ by setting $h_{i+1}|_{X_i}=h_i$
and letting $h_{i+1}|_{\alpha_i}$ be a definable homeomorphism
from $\alpha_i$ onto $\beta_i$.
By construction, $X_{i+1}$ has exactly one fewer connected component
than $X_i$. 
We now verify that Conditions $(E1)$--$(E3)$ 
still hold for $X_{i+1}$ and $h_{i+1}$.

Note that Condition~$(E2)$ holds automatically since $\ak J(X_{i+1}) = \ak J(X_i)$ in view of Lemma \ref{graph-properties} (v).

For Condition $(E3)$, since no new Jordan curve is created, it suffices to check points lying on $\mathring{\alpha}_i$.
Let $p\in \mathring{\alpha}_i$, and let $\gamma\in\mathfrak{J}(X_{i+1})$
be such that $p\in \Int(\gamma)$. We need to show that
$$
h_{i+1}(p)\in \Int(h_{i+1}(\gamma)).
$$
Since $F_i$ is a face and $\mathring{\alpha}_i\subset F_i$,
we have $F_i\subset \Int(\gamma)$.
Let $\gamma_{F_i}$ be the facial Jordan curve associated to $F_i$, in light of Lemma \ref{graph-properties} (viii). 
We have  $\gamma_{F_i} \leq  \gamma$, and therefore $h_{i+1}(\gamma_{F_i}) \leq h_{i+1}(\gamma)$ (since $\gamma_{F_i}$ and $\gamma$ are both contained in $X_0$, and $h_{i+1}|_{X_0}=h_{0}|_{X_0}$ satisfies Condition ($E3$)). 
Now, by Claim \ref{same-border}, $h(\gamma_{F_i}) = \gamma_{F_{i}'}$. Therefore, $\mathring{\beta}_i
\subset F_i' \subset \Int(\gamma_{F_i'})
\subset \Int(h_{i+1}(\gamma))$, and hence $h(p) \in \mathring{\beta}_i \subset  \Int(h_{i+1}(\gamma))$. 
This proves Condition $(E3)$.

It remains to show that Condition $(E1)$ holds.  
The only points at which new topological singularities may arise
after adding $\alpha_i$ are the endpoints $p_{i,1}$ and $p_{i,2}$.
Fix $j\in\{1,2\}$ and suppose that $p_{i,j}$ is a topological
singular point of $X_{i+1}$ with degree at least $3$.
By construction, $p_{i,j}$ is a topologically regular point of $X_i$
lying on a Jordan curve $\gamma\subset Z_{i,j}$, and hence
$
\deg(p_{i,j},X_{i+1})=3.
$
By Condition~$(E3)$,
$$
\mathring{\alpha}_i\subset \Int(\gamma)
\Longleftrightarrow
\mathring{\beta}\subset \Int(h(\gamma)).
$$
Combining this with Lemma~\ref{ord-add1}, we obtain
$$
\ord(h_{i+1},p_{i,j})
= w(h_{i+1},\gamma)
= w(h_i,\gamma)
= c.
$$
\end{proof}

\begin{theorem}\label{thm_main}
    Let $X \subset \bb R^2$ be a compact definable set of dimension $\leq 1$ and let
$h: X \to \mathbb{R}^2$ be a definable embedding. Then, $h$ has a definable homeomorphic extension on the whole of $\bb R^2$ if and only if there is a constant $c\in \{-1, 1\}$ such that Conditions $(E1)$, $(E2_F)$ and $(E3_F)$ hold. 
\end{theorem}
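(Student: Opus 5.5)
Necessity and sufficiency are handled separately; sufficiency is the real work.

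\emph{Necessity.} Let $H$ be a definable homeomorphic extension of $h$. By Lemma~\ref{PreserveReverse}, either $H$ preserves the orientation of every Jordan curve or it reverses every one. Set $c=1$ in the first case and $c=-1$ in the second. Then $(E2_F)$ holds, and in fact $(E2)$ holds. For $(E1)$, take $p$ with $\deg(p,X)\ge 3$ and a Jordan curve $\sigma$ around $p$. Its image $H(\sigma)$ is a Jordan curve around $h(p)$. It meets the branches $h(X_i^p)$ at the points $H(p_i)$, and $H$ preserves or reverses the cyclic order on $\sigma$ according to $c$ (Lemma~\ref{cyclic-order-lemma}). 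By Lemma~\ref{Not-depend} the local order does not depend on the choice of curve, so $\ord(h,p)=c$. Finally, $H$ maps $\Int(\gamma)$ onto the bounded component of $\mathbb R^2\setminus H(\gamma)$, which is $\Int(h(\gamma))$. This gives $(E3_F)$.

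\emph{Sufficiency.} Assume $(E1)$, $(E2_F)$ and $(E3_F)$.
\begin{enumerate}
\item Lemma~\ref{lem_condition_equivalence}(i),(ii) upgrades these to $(E1)$--$(E3)$.
\item Proposition~\ref{Join-Bound} gives a connected $\widetilde X\supset X$, containing $\mathbb S_R$, and an embedding $\widetilde h$ that still satisfies $(E1)$--$(E3)$. On $\mathbb S_R$, $\widetilde h$ is the identity or a reflection.
\item Extend $\widetilde h$ outside $\overline{\mathbb B}_R$ by the same linear map. It then remains to extend over the bounded faces of $\widetilde X$.
\item Enlarge $\widetilde X$ once more so that every bounded face is a Jordan region. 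By Theorem~\ref{uojr}, it suffices to remove leaves lying in $\Int(B(\widetilde X))$ and to remove special Jordan curves.
\begin{itemize}
\item For a leaf $x$ in an inner face $F$, add a definable arc inside $F$ from $x$ to a topologically regular point of the facial curve $\gamma_F$.
\item For a special Jordan curve $\gamma$ with attaching point $p$, add an arc from a regular point of $\gamma$ to a regular point of the Jordan curve of the surrounding face.
\end{itemize}
In the image, the corresponding arc is drawn inside the face $F'$ given by Lemma~\ref{same-boundary}. Since $\partial F'=\widetilde h(\partial F)$, the image endpoints lie on $\partial F'$, and such an arc exists.
\item Re-verify $(E1)$--$(E3)$ after each added arc. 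New vertices of degree $3$ lie on a Jordan curve $\gamma$, and Lemma~\ref{ord-add1} together with $(E3)$ gives $\ord=w(\cdot,\gamma)=c$. New Jordan curves are created, but $(E2)$ needs checking only on facial curves, by Lemma~\ref{lem_condition_equivalence}(i). Each new facial curve is obtained by splitting an old face along an arc, so its orientation behaviour follows from the old facial curve via Lemma~\ref{lem_theta} and Lemma~\ref{InOut}. By Lemma~\ref{lem_condition_equivalence}(iii) (the set is connected), $(E3)$ follows.
\item Each operation strictly reduces the number of leaves plus special curves inside, so the process terminates. Theorem~\ref{uojr} then shows that every bounded face is a Jordan region.
\item Apply Lemma~\ref{same-boundary} to the final pair. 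Each bounded face $F$ is bounded by its facial Jordan curve $\gamma_F$, and it corresponds to a face $F'$ bounded by $\widetilde h(\gamma_F)$.
\item Apply Theorem~\ref{Thm_definable_Schoenflies} to $\widetilde h|_{\gamma_F}$. This gives a definable homeomorphism, and by $(E3)$ it carries $\Int(\gamma_F)=F$ onto $F'$. Keep only its restriction to $\overline F$.
\item Glue these finitely many definable pieces together with the exterior piece. The result is a definable bijection that is continuous on a finite closed cover, hence a homeomorphism.
\end{enumerate}

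\emph{Main obstacle.} The hardest step is step 4, specifically the special-curve removal and keeping $(E2)$ for the new facial curves. The arc must split a face into two Jordan regions compatibly on both sides. I would first treat the two faces created by an added arc $\delta$ in a face bounded by $\gamma$ as the $\theta$-configuration of Lemma~\ref{lem_theta}, but used in reverse. Knowing $w(h,\gamma)=c$ and $\ord=c$ at both endpoints, Lemma~\ref{InOut} shows that the two new curves, oriented counterclockwise, induce the same orientation as $\gamma$ on their shared arcs. This forces $w=c$ for both.
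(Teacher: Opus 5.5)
Your proposal is correct and follows essentially the paper's route: upgrade to $(E1)$--$(E3)$ via Lemma~\ref{lem_condition_equivalence}, add $\mathbb S_R$ and connect components via Proposition~\ref{Join-Bound}, remove leaves and special Jordan curves by adding arcs that end at a regular point of $\gamma_F$, and then apply the definable Schoenflies theorem face by face. The only minor difference is the $(E2_F)$ check for the two new facial curves: you use a reversed $\theta$-configuration argument, in which $\ord=c$ at the endpoints forces the image chord into $\Int(h(\gamma_F))$, while the paper gets the nesting $h_1(\beta)<h_1(\gamma_F)$ directly and applies Lemma~\ref{InOut}; both work.
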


\begin{proof}
The necessity is clear. We now prove the sufficiency. By
Lemma~\ref{lem_condition_equivalence}, it is enough to assume that Conditions $(E1)$--$(E3)$ hold. Moreover, by Proposition~\ref{Join-Bound}, we may
assume that $X$ and $h(X)$ are connected, and that there exists $R>0$ such that
\begin{itemize}
\item $\mathbb S_R\subset X\cap  h(X)$;
\item $X$ and $h(X)$ both are contained in $\overline{\Int(\mathbb S_R)}$;
\item For $(x_1, x_2) \in \bb S_R$
$$
{h}(x_1,x_2)=
\left\{
\begin{array}{rl}
(x_1,x_2), & \text{if } c=1\\
(-x_1,x_2), & \text{if } c=-1;
\end{array}
\right.
$$
\end{itemize}

\begin{claim}\label{claim_jordan_region}
There exist a compact connected definable set
$\widetilde X\subset \overline{\Int(\mathbb S_R)}$, with
$X\subset \widetilde X$, and a definable embedding
$$
\widetilde h:\widetilde X\to \mathbb R^2
$$
extending $h$, with the following properties:
\begin{enumerate}
    \item [(i)]every inner face of $\widetilde X$ and of
    $\widetilde h(\widetilde X)$ is a Jordan region;
    \item [(ii)]$\widetilde h$ maps the boundary of each face of
    $\widetilde X$ onto the boundary of a face of
    $\widetilde h(\widetilde X)$.
\end{enumerate}
\end{claim}

Let us first explain why Claim~\ref{claim_jordan_region} implies that $h$
extends to the whole plane. By Claim~\ref{claim_jordan_region}, the map
$\widetilde h$ sends, inside $\Int(\mathbb S_R)$, the boundary of each face
of $\widetilde X$ homeomorphically onto the boundary of a face of
$\widetilde h(\widetilde X)$. Since these faces are Jordan regions, the
definable Schoenflies theorem (Theorem~\ref{Thm_definable_Schoenflies})
provides an extension of $\widetilde h$ from each inner face of
$\widetilde X$ to the corresponding inner face of
$\widetilde h(\widetilde X)$. We denote the resulting extension by $h'$.

Outside $\Int(\mathbb S_R)$, we define
$$
h'(x_1,x_2)=
\left\{
\begin{array}{rl}
(x_1,x_2), & \text{if } c=1\\
(-x_1,x_2), & \text{if } c=-1.
\end{array}
\right.
$$
Then $h':\mathbb R^2\to \mathbb R^2$ is clearly a definable homeomorphism
extending $h$.

It remains to prove Claim \ref{claim_jordan_region}. 

\begin{proof}[Proof of Claim~\ref{claim_jordan_region}]
    If $X$ has no leaves and has no special Jordan curves, we take $\widetilde{X} = X$ and $\widetilde{h} = h$. Item~(i) then follows from  Theorem \ref{uojr}, and Item~(ii) follows from  Lemma \ref{same-boundary}.
Hence, we may assume that $X$ has at least one leaf in $\Int(\mathbb S_R)$ or at least one special Jordan curve.

We will modify $X$ (and simultaneously $h(X)$) by adding pairwise disjoint definable arcs to eliminate all leaves and all special Jordan curves, thus partitioning
$\Int(X)$ (and $\Int(h(X))$) into Jordan regions. We argue by induction on
$$
k:=\#\{\text{leaves of }X \} \;+\; \#\{\text{special Jordan curves of }X\}.
$$
The case $k=0$ is clear from the previous paragraph. Assume that the statement holds if $k<n$, and consider the case $k=n$.

If $X$ has a leaf in $\Int(\mathbb S_R)$, choose such a leaf and denote it by $p$.
Let $F$ (resp.\ $F'$) be the inner face of $X$ (resp.\ $h(X)$) whose boundary contains
$p$ (resp.\ $h(p)$).
Otherwise, $X$ has a special Jordan curve, then choose a special Jordan curve $S$ of $X$ and pick a topologically
regular point $p\in S$. Then $h(p)$ is a topologically regular point of $h(X)$ lying on the
special Jordan curve $h(S)$. Let $F$ (resp.\ $F'$) be the face of $X$ (resp.\ $h(X)$) which is not contained
in $\Int(S)$ (resp.\ $\Int(h(S))$) and whose boundary contains $p$ (resp.\ $h(p)$).
By the definition of special Jordan curves, both $F$ and $F'$ are bounded.

Let $\gamma_F$ be the facial Jordan curve associated to $F$. Choose a topologically regular point $\widetilde p\in X\cap \gamma_F$. Note that $\widetilde{p}\ne p.$ 
Pick a definable arc $\alpha$ joining $p$ to $\widetilde p$ with $\mathring\alpha\subset F$.
Similarly, pick a definable arc $\sigma$ joining $h(p)$ to $h(\widetilde p)$ with $\mathring\sigma\subset F'$.
Define
$$
h_1\colon X\cup \alpha \longrightarrow h(X)\cup \sigma
$$
by setting $h_1|_X=h$ and choosing $h_1|_\alpha\colon \alpha\to \sigma$ to be a definable homeomorphism.

To apply the induction hypothesis, it suffices to verify that
Conditions $(E1)$ and $(E2)$ still hold for the pair $(X\cup\alpha,\,h_1)$.

Let us show that the number of leaves and special Jordan curves of $X\cup\alpha$ is at most $n-1$.
Indeed, if $p$ is a leaf of $X$, then $p$ becomes a topologically regular point of $X\cup\alpha$,
and hence the number of leaves decreases by one.
If $p$ lies on a special Jordan curve $S$, then adding $\alpha$ creates at least two points on $S$ at which
an incident edge is not contained in $\overline{\Int(S)}$.
Consequently, $S$ is no longer a special Jordan curve in $X\cup\alpha$.
It remains to show that no new special Jordan curve is created.
Indeed, let $\beta\subset X\cup\alpha$ be a Jordan curve containing $\alpha$.
Then $\beta$ must meet $X$ at $p$ and $\widetilde p$.
Moreover, at each of these points, the germ of $X\setminus\{\cdot\}$ has a component not contained in $\Int(\beta)$.
Therefore, by definition, $\beta$ is not a special Jordan curve.


 To apply the induction hypothesis and obtain Item~(ii), it suffices to show that Conditions $(E1)$ and $(E2_F)$ hold for $\widetilde{X}$ and $\widetilde{h}$, then apply Lemma~\ref{same-boundary}. 

Adding $\alpha$ can create topological singularities of degree $\ge 3$ only at $\widetilde p$
and possibly at $p$ (in the case where $p$ lies on a special Jordan curve).
Since 
$$\alpha\setminus\{\widetilde p\}\subset \Int(\gamma_F)\quad \text{ and } \quad \sigma\setminus\{h_1(\widetilde p)\}\subset \Int(h(\gamma_F))=\Int(h_1(\gamma_F)),$$
Lemma \ref{ord-add1} gives
$$
\ord(h_1,\widetilde p)=w(h_1,\gamma_F)=w(h,\gamma_F)=c.
$$
If $p$ becomes singular, then $p$ lies on a special Jordan curve $S$ and the arcs $\alpha\setminus\{p\}$ and $\sigma\setminus
\{h_1(p)\}$ lie outside $\Int(S)$ and $\Int(h(S))$, respectively. 
Hence, Lemma \ref{ord-add1} again yields $\ord(h_1,p)=c$. 
Thus, Condition $(E1)$ holds.

 It remains to show that $(E2_F)$ holds. 
By construction, $\alpha$ (resp., $\sigma = h(\alpha)$) separates $F$ (resp., $F'$) into two new inner faces, each of which shares a common arc with $\gamma_F$. Let $\beta$ be the facial Jordan curve associated with one of these inner faces. Then $\beta < \gamma_F$. Let $\eta$ be a common arc of $\beta$ and $\gamma_F$. It follows that
$$
\eta \subset \overline{\Int(\gamma_F)\cup \Int(\beta)}.
$$
Fix an orientation on $\beta$, and choose an orientation on $\gamma_F$ so that both curves induce the same orientation on $\eta$. By Lemma~\ref{InOut}, we have
\begin{equation}\label{CL3.6.1}
    o(\gamma_F,\beta)=1.
\end{equation}
Since $h_1$ is an embedding, it preserves the orientation of arcs. Therefore, the induced orientations on $h_1(\gamma_F)$ and $h_1(\beta)$ agree on the arc $h_1(\eta)$. Since $h_1(\beta) < h_1(\gamma_F)$, we obtain
$$
h_1(\eta)\subset \overline{\Int(h_1(\beta))\cup \Int(h_1(\gamma_F))}.
$$
Applying Lemma~\ref{InOut} again, we get
\begin{equation}\label{CL3.6.2}
o\bigl(h_1(\gamma_F),h_1(\beta)\bigr)=1.    
\end{equation}
Note that,
$$
w(h_1,\gamma_F)=w(h,\gamma_F)=c.
$$
It then follows that
$$
w(h_1,\beta)=c.
$$
This, together with~\eqref{CL3.6.1} and~\eqref{CL3.6.2}, implies that $(E2_F)$ holds.
\end{proof}
This ends the proof of the theorem.
\end{proof}

The following results are immediate from Lemma \ref{lem_condition_equivalence} and  Theorem \ref{thm_main}.

\begin{corollary}\label{cor_main2}
Let $X \subset \bb R^2$ be a compact connected definable set of dimension $\leq 1$ and let
$h: X \to \mathbb{R}^2$ be a  definable embedding. Then, $h$ has a definable homeomorphic extension on the whole of $\bb R^2$ if and only if there is a constant $c\in \{-1, 1\}$ such that Conditions $(E1)$ and $(E2_F)$  hold. 
\end{corollary}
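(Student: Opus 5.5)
The plan is to derive Corollary~\ref{cor_main2} directly from Theorem~\ref{thm_main}, with Lemma~\ref{lem_condition_equivalence}(iii) supplying the condition that is no longer assumed. The point is that connectedness of $X$ makes condition $(E3_F)$ redundant.

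\emph{Necessity.} Suppose $H:\mathbb R^2\to\mathbb R^2$ is a definable homeomorphism extending $h$. By Theorem~\ref{thm_main}, there is a constant $c\in\{\pm1\}$ for which $(E1)$, $(E2_F)$ and $(E3_F)$ hold; in particular $(E1)$ and $(E2_F)$ hold. One can also see this directly. By Lemma~\ref{PreserveReverse}, $H$ either preserves the orientation of every Jordan curve or reverses it, which fixes $c$ and gives $(E2_F)$. Taking a Jordan curve around a singular point $p$ and applying Lemma~\ref{cyclic-order-lemma} to the restriction of $H$ shows that $\ord(h,p)=c$, which is $(E1)$.

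\emph{Sufficiency.} Assume $X$ is connected and that $(E1)$ and $(E2_F)$ hold for some $c$. Lemma~\ref{lem_condition_equivalence}(iii) then gives $(E3)$. Since $(E3)\Rightarrow(E3_F)$ trivially, all three hypotheses of Theorem~\ref{thm_main} are satisfied, and the theorem yields a definable homeomorphic extension of $h$ to $\mathbb R^2$.

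The only nontrivial input is Lemma~\ref{lem_condition_equivalence}(iii), and this is where connectedness is used. It lets every point $x\in X\setminus\gamma$ be joined to $\gamma$ by an arc in $X$ meeting $\gamma$ at a single point. Lemma~\ref{ord-add1}, combined with $(E1)$ and the condition $(E2)$ obtained from part (i), then transfers the interior/exterior position of $x$ to that of $h(x)$.

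Two degenerate cases need a brief check. If $\dim X=0$, connectedness forces $X$ to be a single point, and a translation extends $h$. If $X$ contains no Jordan curve, then $(E2_F)$ and $(E3)$ are vacuous and Theorem~\ref{thm_main} applies directly.
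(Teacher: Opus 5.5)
Your proposal is correct and follows the paper's argument: the paper also deduces the corollary from Theorem~\ref{thm_main}, using Lemma~\ref{lem_condition_equivalence}(iii) to obtain $(E3)$, and hence $(E3_F)$, from $(E1)$ and $(E2_F)$ when $X$ is connected. Your direct verification of necessity and your treatment of the degenerate cases go beyond what the paper writes, but they are consistent with it and not needed.
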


\section{Topological extension: the unbounded case}\label{section4}

Let $a\in \bb R^2$, we call the following map 
$$ \phi_a: \bb R^2 \setminus \{a\} \to \bb R^2,\quad x\mapsto \frac{ x-a}{\|x - a\|^2}$$ the \textit{compactification} with respect to $a$. 

Let $X\subset \mathbb R^2$ be an unbounded definable set of dimension $1$. As before, we denote by $\ak J(X)$ the set of Jordan curves contained in $X$. A bounded face, or inner face, of $X$ is a bounded connected component of $\mathbb R^2\setminus X$. Similarly to Lemma \ref{graph-properties} (viii), for each inner face $F$ of $X$, there exists a unique Jordan curve, denoted by $\gamma_F$, such that
$$
\gamma_F\subset \partial F
\qquad\text{and}\qquad
F\subset \operatorname{Int}(\gamma_F).
$$
We still denote by $\ak J_{facial}(X)$ the set of all such Jordan curves. Thus Conditions $(E1)$, $(E2)$, $(E3)$, $(E2_F)$ and $(E3_F)$ are well-defined for $X$.

By Hardt's triviality theorem \cite{Hardt1980, Dries1998}, there exists $R_0>0$ such that for every $R\ge R_0$
the homeomorphism type of $X\setminus \mathbb B_R$ is constant. In particular, 
$$
\deg(\infty,X):=\text{ the number of connected components of } X\setminus \mathbb B_R
$$
is independent of $R\ge R_0$. We call $\deg(\infty,X)$ the \emph{degree of $X$ at infinity}.

Fix $R\ge R_0$, set $m:=\deg(\infty,X)$ and denote by
$$
\{X_1^\infty,\dots,X_m^\infty\}
$$
the connected components of $X\setminus \mathbb B_R$. We refer to this collection as the \emph{germ of $X$ at infinity}.

A \emph{Jordan curve around $\infty$ (with respect to $X$)} is a definable Jordan curve
$$
\gamma\subset \mathbb R^2\setminus \mathbb B_R
$$
such that for each $i=1,\dots,m$,
the intersection $X_i^\infty\cap \gamma$ consists of exactly one point. Let $p_i = \gamma \cap X^\infty_i$. 

First, we suppose that  $m\geq 3$.  Then,  the counterclockwise orientation of $\gamma$ induces a cyclic order on the set
$\{p_1,\dots,p_m\}$, hence on $\{X_1^\infty,\dots,X_m^\infty\}$ as follows.
For distinct indices $i,j\in\{1,\dots,m\}$, we write 
$$X_i^\infty < X_j^\infty \quad \text{if and only if} \quad p_i <_\gamma p_j.$$ 
It can be shown (as in the local case) that this cyclic order on
$\{X_1^\infty,\dots,X_m^\infty\}$ does not depend on the choice of the Jordan curve $\gamma$ around $\infty$.
We call it the \emph{cyclic order of $X$ at infinity}.

Let $h\colon X\to \mathbb R^2$ be a definable, continuous, injective and coercive map. Then, $h$ induces a well-defined map on germs at infinity
$$
h_\infty:\{X_1^\infty,\dots,X_m^\infty\}\longrightarrow \{h(X_1^\infty),\dots,h(X_m^\infty)\}.
$$

 We say that $h$ \emph{preserves the cyclic order at infinity} if 
$$X_i^\infty < X_j^\infty \implies h(X_i^\infty) < h(X_j^\infty) \quad \text{ for all } 1 \leq i\ne j \leq m,$$
and $h$ \emph{reverses the cyclic order at infinity} if 
$$X_i^\infty < X_j^\infty \implies h(X_j^\infty) < h(X_i^\infty) \quad \text{ for all } 1 \leq i\ne j \leq m,$$
In the former case, we set $\ord(h,\infty)=1$, and in the latter case, we set $\ord(h,\infty)=-1$; otherwise, we set $\ord(h, \infty) := 0$. 

In what follows, we explain how to choose points
$p\in \mathbb R^2\setminus X$ and $q\in \mathbb R^2\setminus h(X)$ so that the extension problem for $h$ on the unbounded set $X$ can be reduced to the extension problem for the definable embedding
$$
g_{p,q}(x)=\left\{
\begin{array}{rl}
\phi_q\circ h\circ \phi_p^{-1}(x), & \text{if } x\neq 0\\
0, & \text{if } x=0
\end{array}\right.
$$
on the compact definable set $\phi_p(X)\cup\{0\}$, which can be solved by applying Theorem~\ref{thm_main}.

\noindent\textbf{Case 1:} $\deg (\infty, X) = 1$. 

In this case,
$\bb R^2 \setminus X$ has only one connected component at infinity, we denote this by $U_1$. We denote $V_1$ the connected component at infinity of $\bb R^2 \setminus h(X)$. Pick $p \in U_1$ and $q \in V_1$. 

\noindent\textbf{Case 2: }$\deg (\infty, X) = 2$. 

In this case, the germ of $\bb R^2 \setminus X$  has two connected components at infinity, we denote these germs by $U_1$ and $U_2$. Similarly, we will denote by $V_1$ and $V_2$ respectively the connected components of the germ of $\bb R^2\setminus h(X)$ at infinity. We will consider two possibilities of the pair $(p, q)$, first  $p \in U_1$ and $q\in V_1 $, second $p \in U_1$ and $q \in V_2$. 

\noindent\textbf{Case 3: $\deg(\infty, X)\geq 3$.}

By reindexing, we can arrange the sets $X_i^\infty$ in cyclic order as follows:
$$
X_1^\infty < \cdots < X_m^\infty.
$$

Let $U_i$ denote a connected component at infinity bounded by $X_i$ and $X_{i+1}$.

If $w(h, \infty) = c \in \{\pm 1\}$, then either 
$h(X_i^\infty)<h(X_{i+1}^\infty)$ or 
$h(X_{i+1}^\infty)<h(X_{i}^\infty)$, so a connected component at infinity bounded by
$h(X_i^\infty)$ and $h(X_{i+1}^\infty)$ is well-defined. In this case,  we denote such a connected component by $V_i$.  

Pick $i\in\{1,\dots,m\}$ and choose
$$p\in U_i \qquad\text{and}\qquad q\in V_i. $$

\begin{remark}\label{Rem41}\rm
By definition and Lemma~\ref{PreserveReverse}, if $h$ admits a homeomorphic
extension to the whole plane, say $H_0:\mathbb R^2\to\mathbb R^2$, then
$w(h,\infty)=c$ for some $c\in\{\pm1\}$. Moreover, if $m\neq 2$, then
$H_0(U_i)=V_i$ for every $i=1,\ldots,m$. If $m=2$, then either
$H_0(U_1)=V_1$ or $H_0(U_1)=V_2$.
\end{remark}

Now, for the points $p$ and $q$ chosen as above, we  consider the following definable embedding $g_{p, q}: \phi_p(X) \cup \{0\} \to \mathbb R^2$ defined by 
$$
g_{p,q}(x)=\left\{
\begin{array}{rl}
\phi_q\circ h\circ \phi_p^{-1}(x), & \text{if } x\neq 0\\
0, & \text{if } x=0
\end{array}\right.
$$

The main theorem in this section is as follows: 
\begin{theorem}[Reduction to the bounded case]\label{thm_main_unbounded}
Let $X\subset \mathbb R^2$ be a closed unbounded definable set of dimension
at most $1$ and let
$$
h:X\to \mathbb R^2
$$
be a coercive definable embedding. Then, $h$ extends to a definable
homeomorphism of $\mathbb R^2$ if and only if
$$
\ord(h,\infty)\in\{\pm1\}
$$
and $g_{p, q}$
extends to a definable homeomorphism of $\mathbb R^2$. In the case $\deg(\infty, X)=2$, this means that  one of the two choices of the pair $(p,q)$ described above gives such an extension.
\end{theorem}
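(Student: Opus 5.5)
The plan is to conjugate by the inversions $\phi_p,\phi_q$. These are definable homeomorphisms $\mathbb R^2\setminus\{p\}\to\mathbb R^2\setminus\{0\}$ (and similarly for $q$), and each sends a neighbourhood of infinity to a punctured neighbourhood of $0$. Coercivity of $h$ and closedness of $X$ give two facts: $\phi_p(X)\cup\{0\}$ is compact, and $g_{p,q}$ is continuous at $0$. Hence $g_{p,q}$ is a definable embedding of a compact set, as stated.

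\textbf{Sufficiency.} Suppose $G:\mathbb R^2\to\mathbb R^2$ is a definable homeomorphism extending $g_{p,q}$; then $G(0)=0$. Set
$$H:=\phi_q^{-1}\circ G\circ\phi_p \quad\text{on } \mathbb R^2\setminus\{p\},$$
and define $H(p):=q$.
\begin{itemize}
\item On $X$ (note $p\notin X$) we have $H=h$.
\item $H$ is a bijection $\mathbb R^2\to\mathbb R^2$: the map $G$ sends $\mathbb R^2\setminus\{0\}$ onto itself, so $H$ maps $\mathbb R^2\setminus\{p\}$ onto $\mathbb R^2\setminus\{q\}$.
\item $H$ is continuous at $p$. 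As $x\to p$ we have $\|\phi_p(x)\|\to\infty$. Since $G$ is a homeomorphism of $\mathbb R^2$, it is proper, so $\|G(\phi_p(x))\|\to\infty$, and therefore $\phi_q^{-1}(G(\phi_p(x)))\to q$. The same argument applies to $H^{-1}$.
\end{itemize}
So $H$ is a definable homeomorphism extending $h$. The hypothesis $\ord(h,\infty)=\pm1$ is needed in this direction only to make the choice of $q\in V_i$ meaningful.

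\textbf{Necessity.} Let $H_0$ be a definable homeomorphic extension of $h$. By Remark~\ref{Rem41}, $\ord(h,\infty)\in\{\pm1\}$. Moreover, $H_0(U_i)=V_i$ when $m\ne2$; when $m=2$, $H_0(U_1)$ is $V_1$ or $V_2$, and we take the corresponding pair. In Case 1 the same conclusion holds, since the unique component at infinity is mapped to the unique one. The key step is to modify $H_0$ so that it sends $p$ to $q$ while remaining unchanged on $X$.

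Let $W$ be the face of $\mathbb R^2\setminus X$ containing $p$ (a face containing the germ $U_i$). Then $H_0(W)$ is the face of $\mathbb R^2\setminus h(X)$ containing the germ $V_i$, so $q':=H_0(p)$ and $q$ lie in the same connected open definable set $W'=H_0(W)$. Choose a definable arc in $W'$ from $q'$ to $q$ and a small closed definable disc-like neighbourhood $D\subset W'$ of this arc. By the definable Schoenflies theorem (Theorem~\ref{Thm_definable_Schoenflies}), applied after straightening the arc by a definable homeomorphism of $D$, there is a definable homeomorphism $\psi$ of $\mathbb R^2$ with $\psi(q')=q$ and $\psi=\mathrm{id}$ outside $D$. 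Replacing $H_0$ by $\psi\circ H_0$, we may assume $H_0(p)=q$. Then $G:=\phi_q\circ H_0\circ\phi_p^{-1}$ on $\mathbb R^2\setminus\{0\}$, with $G(0)=0$, is a definable bijection extending $g_{p,q}$, and it is continuous at $0$ by the same properness argument.

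The main obstacle is this point-pushing step. One must construct $\psi$ definably, for instance as a definable ``finger move'' supported in a tubular neighbourhood of a polygonal arc obtained via definable triangulation. One must also check carefully that $W$, defined via germs at infinity, is genuinely sent into the face containing $V_i$. The latter is exactly where Remark~\ref{Rem41} and the case analysis on $\deg(\infty,X)$ enter.
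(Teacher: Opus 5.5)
Your proof is correct. The sufficiency direction and the final conjugation step of necessity coincide with the paper's, and you even supply the properness argument for continuity at $p$ and at $0$, which the paper leaves implicit.

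The genuine difference is in how you arrange $H(p)=q$, which the paper isolates as Lemma~\ref{lem_move_point_extension}. The paper never pushes the point by hand. It picks an auxiliary point $p_0\in U_1\setminus\{p\}$ with $q_0:=H_0(p_0)\neq q$ and recompactifies at $(p_0,q_0)$. The map $g_0=\phi_{q_0}\circ h\circ\phi_{p_0}^{-1}$ then extends, so by the necessity part of Theorem~\ref{thm_main} it satisfies $(E1)$, $(E2_F)$, $(E3_F)$. The paper adjoins the isolated point $\widetilde p=\phi_{p_0}(p)\mapsto\widetilde q=\phi_{q_0}(q)$, which lies in the outer faces, and checks that the three conditions survive. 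It then invokes the sufficiency part of Theorem~\ref{thm_main} and conjugates the resulting extension back.

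You instead post-compose $H_0$ with a homeomorphism $\psi$ supported in a closed disc $D\subset H_0(W)$. Since $D$ misses $h(X)$, the composition still extends $h$. The ``finger move'' you flag as the main obstacle is in fact routine. Use Theorem~\ref{Thm_definable_Schoenflies}, applied to a definable homeomorphism $\partial D\to\mathbb S_1$, to carry $D$ onto the closed unit disc. There use the semialgebraic cone map $a+t(u-a)\mapsto b+t(u-b)$, with $u\in\mathbb S_1$ and $t\in[0,1]$, which fixes $\mathbb S_1$ pointwise and sends $a$ to $b$.

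What each route buys: yours is more elementary and independent of the whole bounded-case machinery of Section~\ref{section3}. The paper's avoids any explicit construction by reusing Theorem~\ref{thm_main}. Both rely in the same way on Remark~\ref{Rem41} to know that the face containing $p$ is carried onto the face containing $q$.
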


\begin{lemma}\label{lem_move_point_extension}
Let $h: X \to \mathbb R^2 $ be as in Theorem~\ref{thm_main_unbounded}, and suppose that $h$
extends to a definable homeomorphism of $\mathbb R^2$. For the points $p$
and $q$ chosen as above, there exists such an extension
$$
H:\mathbb R^2\to\mathbb R^2
$$
with $H(p)=q$.
\end{lemma}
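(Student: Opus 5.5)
Let $H_0:\mathbb R^2\to\mathbb R^2$ be a definable homeomorphic extension of $h$. The plan is to compose $H_0$ with a definable homeomorphism $\Psi$ of $\mathbb R^2$ that fixes $h(X)$ pointwise and moves $H_0(p)$ to $q$; then $H:=\Psi\circ H_0$ extends $h$ and satisfies $H(p)=q$. So everything reduces to two steps: showing that $H_0(p)$ and $q$ lie in the same connected component of $\mathbb R^2\setminus h(X)$, and constructing $\Psi$ inside that component.

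\textbf{Step 1: locating $H_0(p)$.} By Remark~\ref{Rem41}, if $m:=\deg(\infty,X)\neq 2$ then $H_0(U_i)=V_i$. Hence $H_0(p)$ lies in the unbounded face $W$ of $\mathbb R^2\setminus h(X)$ containing the germ $V_i$. That face also contains $q$, since $q\in V_i$. In Case~1 ($m=1$), $U_1$ and $V_1$ are the unique components at infinity, and the same conclusion holds. If $m=2$, Remark~\ref{Rem41} gives $H_0(U_1)=V_1$ or $H_0(U_1)=V_2$. We take the pair $(p,q)$ from the corresponding one of the two allowed choices, which is exactly the freedom left in the statement. Since $H_0$ maps faces of $X$ onto faces of $h(X)$, in every case $H_0(p)$ and $q$ lie in one face $W$ of $h(X)$. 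This face $W$ is an open, connected, definable set.

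\textbf{Step 2: moving a point inside $W$ by a definable homeomorphism fixing $\mathbb R^2\setminus W$.} Since $W$ is open, connected and definable, it is definably path connected. So we can join $a:=H_0(p)$ to $q$ by a definable arc $\lambda\subset W$. By compactness of $\lambda$, we cover it by finitely many closed disks $\overline{\mathbb B}_{r_k}(c_k)\subset W$ and choose a chain of points $a=a_0,a_1,\dots,a_N=q$ on $\lambda$ with consecutive points $a_{k-1},a_k$ in a common disk $\overline{\mathbb B}_{r_k}(c_k)$. For each $k$ we write down an explicit semialgebraic homeomorphism $\psi_k$ of the closed disk which is the identity on its boundary circle and sends $a_{k-1}$ to $a_k$. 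For example, on each radial segment from $a_{k-1}$ to a boundary point $z$, we map it linearly onto the segment from $a_k$ to $z$. We extend $\psi_k$ by the identity outside the disk. Then $\Psi:=\psi_N\circ\cdots\circ\psi_1$ is a definable homeomorphism of $\mathbb R^2$. It is the identity outside $W$, in particular on $h(X)$, and it satisfies $\Psi(a)=q$.

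\textbf{Conclusion and main obstacle.} Setting $H:=\Psi\circ H_0$, we get $H|_X=h$ and $H(p)=q$, and $H$ is definable as a composition of definable maps. The main point requiring care is Step~1. We must make precise that the germ-at-infinity components $U_i$ and $V_i$ sit in well-defined faces and that $H_0$ respects them. This uses the independence of the cyclic order at infinity from the choice of Jordan curve around $\infty$, together with $\ord(h,\infty)=c$, which follows from Lemma~\ref{PreserveReverse} via Remark~\ref{Rem41}. In the case $m=2$, we must also take the appropriate one of the two choices of $(p,q)$. Step~2 is routine once $W$ is known to contain both points.
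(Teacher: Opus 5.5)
Your proposal is correct. Your Step~1 is exactly the paper's opening assertion that $H_0$ carries the face of $X$ containing $p$ onto the face of $h(X)$ containing $q$. That comes from Remark~\ref{Rem41}, with the matching choice of $(p,q)$ when $\deg(\infty,X)=2$; the paper also uses this choice, though only implicitly.

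After Step~1 the two routes differ.
\begin{itemize}
\item \textbf{The paper's route.} It picks an auxiliary point $p_0\in U_1\setminus\{p\}$ with $q_0:=H_0(p_0)\neq q$. It inverts via $\phi_{p_0}$ and $\phi_{q_0}$, so that $\widetilde p=\phi_{p_0}(p)$ and $\widetilde q=\phi_{q_0}(q)$ land in the outer faces. It then adjoins the isolated point $\widetilde p\mapsto\widetilde q$ to the compact map, re-verifies $(E1)$, $(E2_F)$, $(E3_F)$, invokes Theorem~\ref{thm_main}, and conjugates back.
\item \textbf{Your route.} You replace all of this by the homogeneity of a connected open set: you post-compose $H_0$ with a semialgebraic homeomorphism supported in a compact subset of the face $W$.
\end{itemize}

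Your argument is more elementary and self-contained. It uses neither direction of Theorem~\ref{thm_main} nor the definable Schoenflies theorem. It also shows that the new extension can be taken to agree with $H_0$ off a compact subset of a single face. The paper's version instead recycles machinery it has already built, at the price of an auxiliary point and a re-check of the extension conditions.

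One detail in Step~2 needs fixing. The points $a_{k-1},a_k$ must lie in the \emph{open} disk, because a homeomorphism fixing the boundary circle pointwise cannot move a boundary point. For example, set $\delta:=\dist(\lambda,\mathbb R^2\setminus W)>0$ and choose consecutive points with $\|a_k-a_{k-1}\|<\delta/2$. Then use the disk $\overline{\mathbb B}_{\delta/2}(a_{k-1})$, which lies in $W$ and contains $a_k$ in its interior.
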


\begin{proof} 

Let $H_0: \bb R^2 \to \bb R^2$ be a definable homeomorphic extension of $h$. Then, $H_0$ maps the connected component containing $p$ to the connected component containing $q$. Without loss of generality, we may assume $p \in U_1$ and $q\in V_1$. 

Choose $p_0 \in U_1\setminus\{ p\}$ such that $q_0: = H_0(p_0) \neq q$. 
Set
$$
Y_{p_0}:=\phi_{p_0}(X)\cup\{0\}.
$$
Define
$$
g_0:Y_{p_0}\to\mathbb R^2
$$
by
$$
g_0(x)=\left\{
\begin{array}{rl}
\phi_{q_0}\circ h\circ \phi_{p_0}^{-1}(x), & \text{if } x\neq 0\\
0, & \text{if } x=0.
\end{array}\right.
$$
Since $H_{0}(p_0)=q_0$, the map
$$
G_0(x):=\left\{
\begin{array}{rl}
     \phi_{q_0}\circ H_0\circ\phi_{p_0}^{-1}(x), & \text{if } x\neq 0  \\
     0, & \text{if } x=0
\end{array}\right.
$$
is a definable homeomorphism of $\mathbb R^2$ extending $g_0$.

Now set
$$
\widetilde p:=\phi_{p_0}(p)
\quad \text{ and } \quad
\widetilde q:=\phi_{q_0}(q).
$$
Since $p,p_0$ are in the same connected component of $\mathbb R^2 \setminus X$ the point $\widetilde p$ lies on the outer face of $Y_{p_0}.$
Similarly, since $q,q_0$ lie in the same connected component of $\mathbb R^2 \setminus h(X)$, the point $\widetilde q$ lies on the outer face of $g_0(Y_{p_0}).$

Define
$$
\widetilde g_0:Y_{p_0}\cup\{\widetilde p\}\to\mathbb R^2
$$
by
$$
\widetilde g_0|_{Y_{p_0}}=g_0
\quad \text{ and } \quad
\widetilde g_0(\widetilde p)=\widetilde q.
$$
We claim that $\widetilde g_0$ satisfies the bounded extension criterion required in Theorem~\ref{thm_main}.

Note that, since $G_0$ is a definable homeomorphism extending $g_0$ on the whole of $\mathbb{R}^2,$ in light of Theorem ~\ref{thm_main}, it follows that $g_0$ satisfies Conditions $(E1),\ (E2_F)$ and $(E3_F)$ in Section \ref{section3}. 
Now, adding the isolated point $\widetilde p$ in the outer face of $Y_{p_0}$ creates no new Jordan curves and no new topological singular points of degree at least $3$. Therefore, Condition $(E1)$ and $(E2_F)$ are automatically satisfied for $\widetilde{g}_0$.

It remains to check Condition $(E3_F)$ for $\widetilde{g}_0$.
Let
$$
\gamma\in \ak  J_{{facial}}(Y_{p_0}\cup\{\widetilde p\}).
$$
Since $\widetilde p$ is isolated and lies in the outer face of $Y_{p_0}$, every facial Jordan curve of
$Y_{p_0}\cup\{\widetilde p\}$ is already a facial Jordan curve of $Y_{p_0}$. Moreover,
because $\widetilde p$ lies in the outer face of $Y_{p_0}$, we have
$$
\widetilde p\notin \Int(\gamma).
$$
Similarly, because $\widetilde q$ lies in the outer face of $g_0(Y_{p_0})$, we have
$$
\widetilde q\notin \Int(g_0(\gamma)).
$$
 This, together with the fact that $g_0$ satisfies Condition $(E3_F)$ implies that $\widetilde g_0$ also satisfies Condition $(E3_F)$.

By Theorem \ref{thm_main}, $\widetilde g_0$ admits a definable homeomorphic extension
$$
\widetilde G_0:\mathbb R^2\to\mathbb R^2.
$$
Define 
$$
 H(x)
:=\left\{
\begin{array}{rl}
\phi_{q_0}^{-1}\circ \widetilde G_0\circ\phi_{p_0},  & \text{if } x\neq p_0\\
 q_0,  & \text{if } x= p_0.
\end{array}
\right.
$$
Then $H$ is a definable homeomorphism
$
\mathbb R^2\to\mathbb R^2.
$

For every $x\in X$, we have
$$
H(x)
=
\phi_{q_0}^{-1}\circ \widetilde G_0\circ\phi_{p_0}(x)
=
\phi_{q_0}^{-1}\circ g_0\circ\phi_{p_0}(x)
=
h(x).
$$
Thus $H$ extends $h$.

Finally,
$$
H(p)
=
\phi_{q_0}^{-1}\bigl(\widetilde G_0(\phi_{p_0}(p))\bigr)
=
\phi_{q_0}^{-1}\bigl(\widetilde G_0(\widetilde p)\bigr)
=
\phi_{q_0}^{-1}(\widetilde q)
=
q.
$$
Therefore $H$ is a desired definable homeomorphic extension of $h$.
\end{proof}

Now we are ready to prove Theorem \ref{thm_main_unbounded}.

\begin{proof}[Proof of Theorem \ref{thm_main_unbounded}.]

Set
$$
Y_p:=\phi_p(X)\cup\{0\}.
$$
Since $h$ is coercive, the map
$$
g_{p,q}:Y_p\to \mathbb R^2
$$
defined by
$$
g_{p,q}(x)=\left\{
\begin{array}{rl}
\phi_q\circ h\circ \phi_p^{-1}(x), & \text{if }x\neq 0\\
0, & \text{if }x=0
\end{array}\right.
$$
is a definable embedding of the compact definable set $Y_p$.

Assume first that $h$ admits a definable homeomorphic extension to $\mathbb R^2$.
Then we have necessarily $\operatorname{ord}(h,\infty)\in\{\pm1\}$. By Lemma~4.2, we may choose
a definable homeomorphic extension
$$
H:\mathbb R^2\to \mathbb R^2
$$
of $h$ such that $H(p)=q$. Define
$$
G(x):=\left\{
\begin{array}{rl}
\phi_q\circ H\circ \phi_p^{-1}(x), & \text{if } x\ne 0\\     
0, & \text{if } x= 0\\
\end{array}
\right.
$$

Then $G$ is a definable homeomorphism of $\mathbb R^2$. 
Moreover, for every $x\in \phi_p(X)$, we have
$$
G(x)=\phi_q\circ h\circ \phi_p^{-1}(x)=g_{p,q}(x) \quad \text{ and }\quad G(0)=0=g_{p,q}(0).$$
Therefore $G$ is a definable homeomorphic extension
of $g_{p,q}$.

Conversely, suppose that $\operatorname{ord}(h,\infty)\in\{\pm1\}$ and that $g_{p,q}$
extends to a definable homeomorphism
$$
G:\mathbb R^2\to\mathbb R^2.
$$
Since $G$ extends $g_{p,q}$, we have $G(0)=0$. Define
$$
H:=\left\{
\begin{array}{rl}
\phi_q^{-1}\circ G\circ \phi_p, & \text{if } x\ne p\\
q , & \text{if } x= p.
\end{array}
\right.$$
Then $H$ is a definable homeomorphism of $\mathbb R^2$. 
Finally, for every $x\in X$,
$$
H(x)
=
\phi_q^{-1}\circ G\circ \phi_p(x)
=
\phi_q^{-1}\circ g_{p,q}\circ \phi_p(x)
=
h(x).
$$
Thus $H$ is a definable homeomorphic extension of $h$.
\end{proof}

\section{Examples}\label{section5}
In this section, we give examples showing that none of the conditions
$(E1)$, $(E2_F)$, and $(E3_F)$ in Theorem~\ref{thm_main} can be omitted.

\begin{example}\label{Ex1}\rm 
Let $X$ and $h(X)$ be as in Figure~\ref{Fig_E1} where $X$ is the black curve with six topological singular points $p_1,\dots,p_6$ and  $h: X \to \mathbb R^2$ is an embedding sending $X$ onto the black curve with singular points
$h(p_1),\dots,h(p_6)$.

\begin{figure}[htbp]
 \includegraphics[scale = 0.6]{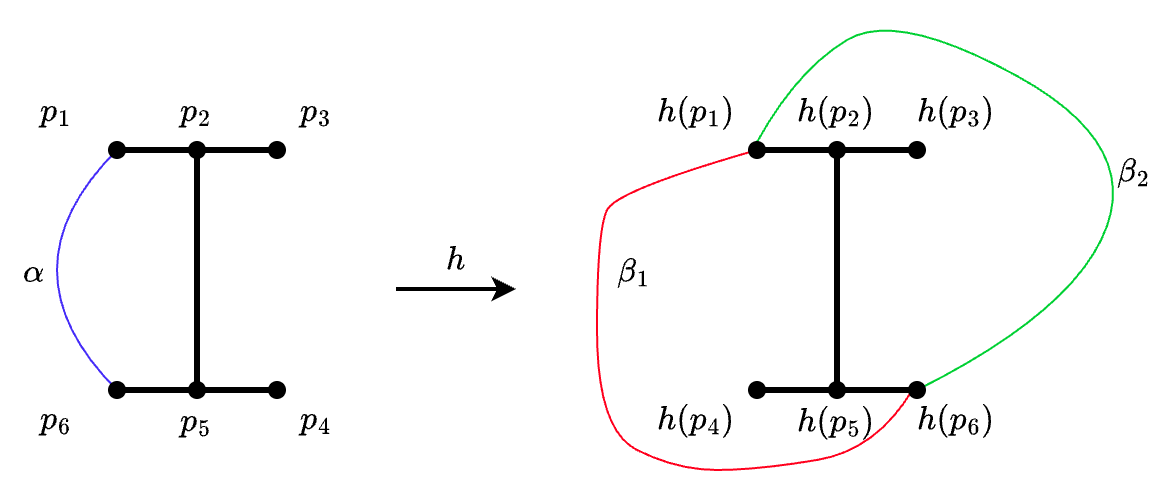}
\caption{}
\label{Fig_E1}
\end{figure}

Observe that in $X$, $p_2$ and $p_5$ are  singular points of degree $3$, and 
$$
\ord(h,p_2)=1 \quad \text{and} \quad \ord(h,p_5)=-1,
$$
so Condition $(E1)$  fails.

We claim that $h$ admits no homeomorphic extension to $\mathbb{R}^2$.
Indeed, suppose for contradiction that such an extension
$$
\widetilde h\colon \mathbb{R}^2 \to \mathbb{R}^2
$$
exists.
Let $\alpha$ be an arc joining $p_1$ and $p_6$ in $\mathbb{R}^2\setminus X$
(the blue arc in Figure \ref{Fig_E1}).
Then $\widetilde h(\alpha)$ is an arc joining $h(p_1)$ and $h(p_6)$ in
$\mathbb{R}^2\setminus h(X)$.

There are exactly two possible configurations for $\widetilde h(\alpha)$
(the red arc $\beta_1$ or the green arc $\beta_2$ in Figure \ref{Fig_E1}).
Note that $p_3$ and $p_4$ are contained in the exterior of Jordan curve
$p_1p_2p_5p_6p_1$.
However, for either choice of $\widetilde h(\alpha)$, one of the points
$h(p_3)$ or $h(p_4)$ is contained in the interior of the Jordan curve
$$
h(p_1)h(p_2)h(p_5)h(p_6)h(p_1)
= \widetilde h(p_1p_2p_5p_6p_1).
$$
This contradicts the fact that $\widetilde h$ is a homeomorphism of $\mathbb{R}^2$.
Hence $h$ does not admit a homeomorphic extension to $\mathbb{R}^2$.
\end{example}

\begin{example}\label{Example1.2}\rm
Let $X$ and $h(X)$ be as in Figure~\ref{Fig_E2} where $X$ consists of two circles
$\gamma_2<\gamma_1$, together with an arc joining them at points
$p_1\in \gamma_1$ and $p_2\in \gamma_2$, and  the definable embedding $h: X \to \mathbb R^2$ restricted 
to $\gamma_1$ is the identity map and, $h$ maps $\gamma_2$ onto a circle lying in the exterior of $h(\gamma_1)$ with the opposite orientation.

\begin{figure}[htbp]
 \includegraphics[scale = 0.6]{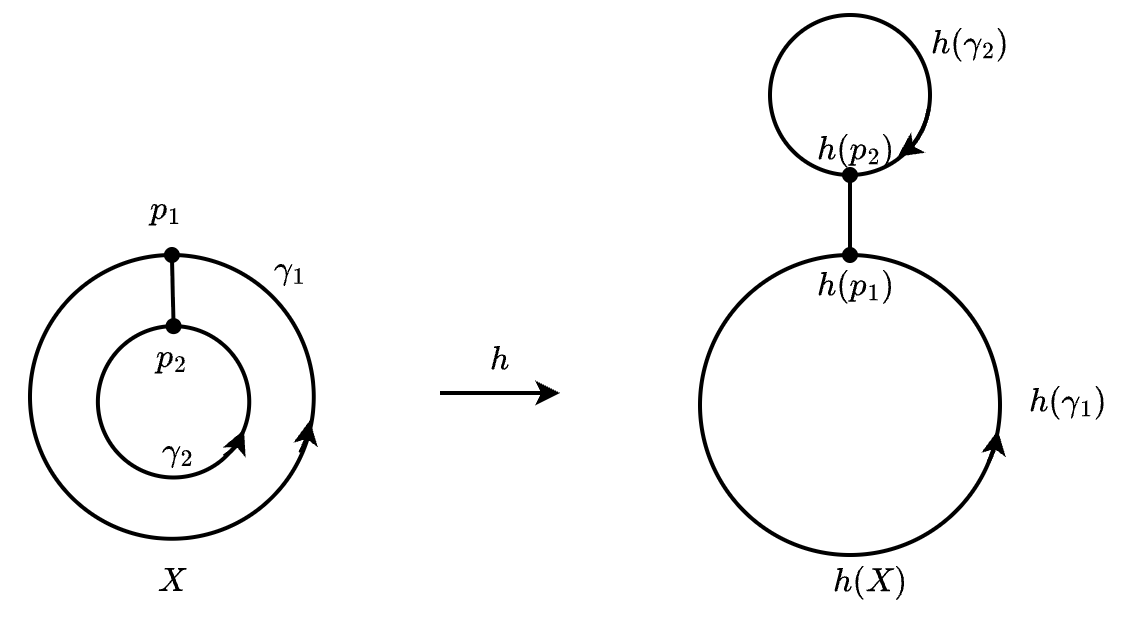}
\caption{}
\label{Fig_E2}
\end{figure}

It is clear that
$$
\ord(h,p_1)=\ord(h,p_2)=w(h,\gamma_2)=-1,
$$
whereas
$$
w(h,\gamma_1)=1.
$$
Thus Condition $(E1)$ holds, while Condition $(E2_F)$ fails.

We claim that $h$ cannot be extended to a homeomorphism of the whole plane
$\mathbb R^2$. Indeed, if such an extension existed, then it would have to map
the interior of $\gamma_1$ onto the interior of $h(\gamma_1)$. Since
$\gamma_2$ lies in the interior of $\gamma_1$, this would imply that
$h(\gamma_2)$ lies in the interior of $h(\gamma_1)$, contradicting the
assumption that $h(\gamma_2)$ lies in the exterior of $h(\gamma_1)$.
\end{example}

\begin{example}\label{Ex3}\rm 
Let $X$ and $h(X)$ be as in Figure~\ref{Fig_E3} where  $X=\gamma_1\cup\gamma_2$ is the union of two disjoint definable Jordan curves such that
$\gamma_2 <\gamma_1$, and  $h: X \to \mathbb R^2$ is a definable embedding sending $X$
onto two disjoint definable Jordan curves satisfying $h(\gamma_2) <h(\gamma_1)$ and
$$
w(h,\gamma_1)=1 \quad \text{and} \quad w(h,\gamma_2)=-1.
$$
\begin{figure}[htbp]
 \includegraphics[scale = 0.4]{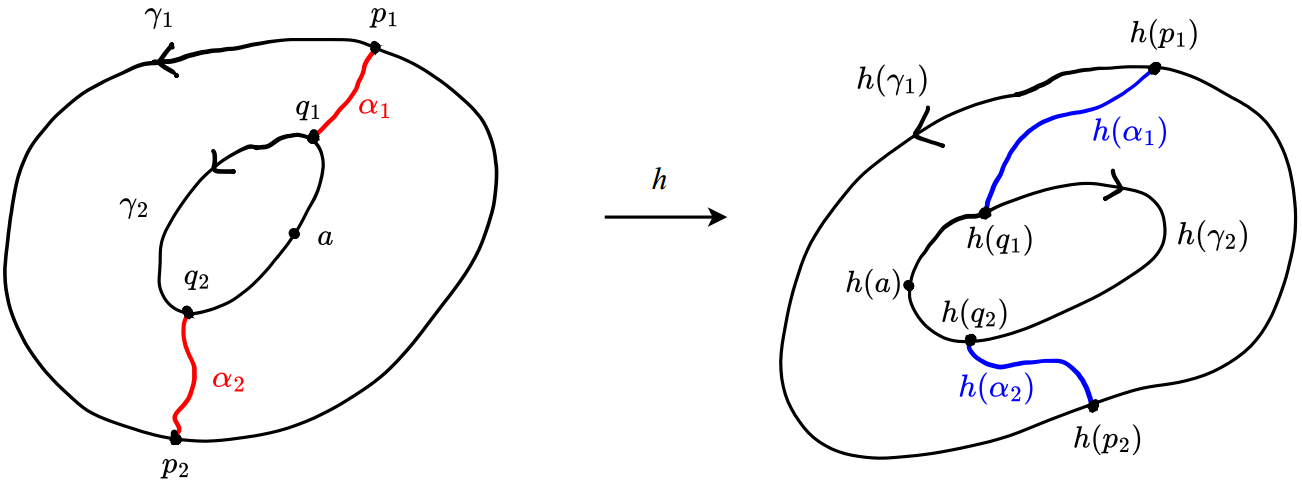}
\caption{}
\label{Fig_E3}
\end{figure}

It is obvious that Condition $(E3_F)$ holds while Condition $(E2_F)$ fails.
We will show that $h$ admits no homeomorphic extension to $\mathbb{R}^2$. Assume for contradiction that such an extension
$$
\widetilde h\colon \mathbb{R}^2 \to \mathbb{R}^2
$$
exists.

Choose distinct points $p_1,p_2\in\gamma_1$ and $q_1,q_2\in\gamma_2$.
Let $\alpha_i$, $i = 1, 2$ be definable arcs internally contained in
$\mathbb{R}^2\setminus X$ joining $p_i$ to $q_i$.
Consider the Jordan curve
$$
\beta = p_1p_2q_2q_1p_1,
$$
where $p_1p_2$ is the counterclockwise arc from $p_1$ to $p_2$ along $\gamma_1$ and
$q_2q_1$ is the clockwise arc from $q_2$ to $q_1$ along $\gamma_2$.

We have 
$$
\widetilde{h}(\beta) = h(p_1)h(p_2)h(q_2)h(q_1)h(p_1),
$$
where $h(p_1)h(p_2)$ is the counterclockwise arc on $h(\gamma_1)$ and
$h(q_2)h(q_1)$ is the counterclockwise arc on $h(\gamma_2)$.

Let $a \in [q_1,q_2]^{-}_{\gamma_2}$. 
Since $w(h,\gamma_2) = -1$,  $h(a)\in [h(q_1),h(q_2)]^{+}_{h(\gamma_2)} $.
Consequently, $a$ is contained in the exterior of the Jordan curve $\beta$
whereas $h(a)$ is contained in the interior of the Jordan curve $\widetilde{h}(\beta)$. This contradicts the fact that $\widetilde h$ is a homeomorphism of $\mathbb{R}^2$.
Therefore, $h$ does not admit a homeomorphic extension to $\mathbb{R}^2$.
\end{example}

\begin{example}\label{Ex4}\rm 
Let $X$ and $h(X)$ be as in Figure~\ref{Fig_E4} where $X$ is a  union of a definable Jordan curve $\gamma$ and a definable
one-dimensional set $Y \subset \Int(\gamma)$ consisting three topological singular points $p_1,p_2,p_3$ of degree $1$ and one topological singular point $p$ of degree $3$,  and $h\colon X\to \mathbb{R}^2$ is a definable embedding with $h(Y) \subset \Int(h(\gamma))$
and
$$
w(h,\gamma)=1 \quad \text{and} \quad \ord(h,p)=-1.
$$

\begin{figure}[htbp]
 \includegraphics[scale = 0.4]{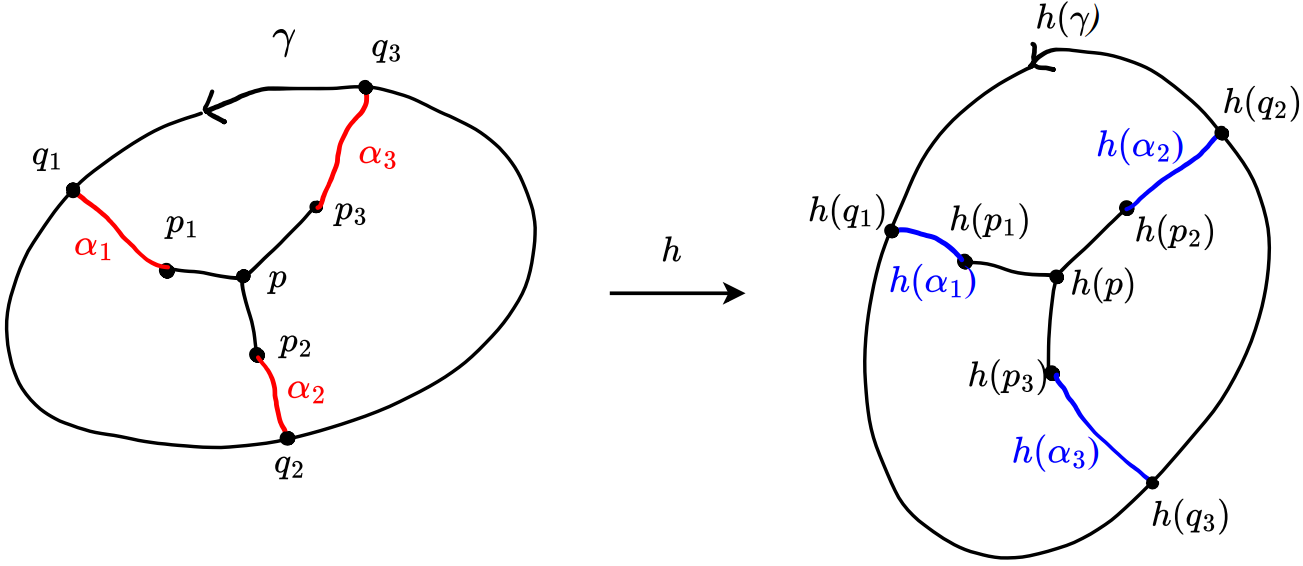}
\caption{}
\label{Fig_E4}
\end{figure}
Note that Conditions $(E3_F)$ is satisfied while $(E1)$ and $(E2_F)$ hold with different constants $c$. We claim that $h$ does not allow a homeomorphic extension to the whole of $\bb R^2$.

Indeed, assume for contradiction that $h$ admits a homeomorphic extension
to $\mathbb{R}^2$.
Let $X_i^p$ denote the germ at $p$ of the arc $[p,p_i]$, for $i=1,2,3$.
We see that 
$$
X_1^p < X_2^p < X_3^p < X_1^p.
$$
Let $\alpha_i$, $i=1,2,3$, be disjoint definable arcs internally contained in $\mathbb{R}^2\setminus X$  joining $p_i$ to points
$q_i\in\gamma$.
By construction, we have
$$
q_1 <_{\gamma} q_2 <_{\gamma} q_3 <_{\gamma} q_1.
$$
Hence $q_3$ is not contained in $[q_1q_2]^+_{\gamma}$.

Since $w(h,\gamma)=1$, the image $h([q_1q_2]^+_{\gamma}) = [h(q_1)h(q_2)]^+_{h(\gamma)}$.
On the other hand, the assumption $\ord(h,p)=-1$ implies that
$$
h(X_1^p) < h(X_3^p) < h(X_2^p) < h(X_1^p).
$$
Consequently,
$$
h(q_1) <_{h(\gamma)} h(q_3) <_{h(\gamma)} h(q_2) <_{h(\gamma)} h(q_1).
$$
Thus the arc $h([q_1q_2]^+_{\gamma})$ contains $h(q_3)$. This contradicts the fact that $q_3\notin [q_1q_2]^+_{\gamma}$.
Consequently, $h$ cannot admit a  homeomorphic extension to
$\mathbb{R}^2$.
\end{example}

\bibliographystyle{siam}
\bibliography{Biblio.bib}
\end{document}